\newcommand{\df}{\dfrac}
 \renewcommand{\a}{\alpha}
\renewcommand{\b}{\beta}
\renewcommand{\d}{{\delta}}
\newcommand{\g}{\gamma}
\renewcommand{\l}{\lambda}
\renewcommand{\k}{\kappa}
\renewcommand{\(}{\left\(}
\renewcommand{\)}{\right\)}
\renewcommand{\[}{\left\[}
\renewcommand{\]}{\right\]}
\renewcommand{\i}{\infty}
\numberwithin{equation}{section}
 \theoremstyle{plain}
\newtheorem{theorem}{Theorem}[section]
\newtheorem{lemma}[theorem]{Lemma}
\newtheorem{remark}[]{Remark}
\newtheorem{conjecture}[theorem]{Conjecture}
\newtheorem{corollary}[theorem]{Corollary}
\newtheorem{example}[]{Example}
\def\proof{\@ifnextchar[{\@oproof}{\@nproof}}
\def\@oproof[#1][#2]{\trivlist\item[\hskip\labelsep\textit{#2 Proof of\
#1.}~]\ignorespaces}
\def\@nproof{\trivlist\item[\hskip\labelsep\textit{Proof.}~]\ignorespaces}
\begin{document}
\title[Untrodden pathways in the theory of the restricted partition function $p(n, N)$]{Untrodden pathways in the theory of the restricted partition function $p(n, N)$} 
%{\Large }

\author{Atul Dixit, Pramod Eyyunni, Bibekananda Maji and Garima Sood}\thanks{2010 \textit{Mathematics Subject Classification.} Primary 11P81, 11P84; Secondary 05A17.\\
\textit{Keywords and phrases.} partitions, restricted partition function, $q$-series, finite analogues, smallest parts function, divisor function.}
\address{Discipline of Mathematics, Indian Institute of Technology Gandhinagar, Palaj, Gandhinagar 382355, Gujarat, India} 
\email{adixit@iitgn.ac.in}
\address{Harish-Chandra Research Institute, HBNI, Chhatnag road, Jhunsi, Allahabad, India, 211019} 
\email{pramodeyy@gmail.com}
\address{Discipline of Mathematics, Indian Institute of Technology Gandhinagar, Palaj, Gandhinagar 382355, Gujarat, India} 
\email{bibekananda.maji@iitgn.ac.in}
\address{Discipline of Mathematics, Indian Institute of Technology Gandhinagar, Palaj, Gandhinagar 382355, Gujarat, India} 
\email{garima.sood@iitgn.ac.in}
\begin{abstract}
We obtain a finite analogue of a recent generalization of an identity in Ramanujan's Notebooks. Differentiating it with respect to one of the parameters leads to a result whose limiting case gives a finite analogue of Andrews' famous identity for $\textup{spt}(n)$. The latter motivates us to extend the theory of the restricted partition function $p(n, N)$, namely, the number of partitions of $n$ with largest parts less than or equal to $N$, by obtaining the finite analogues of rank and crank for vector partitions as well as of the rank and crank moments. As an application of the identity for our finite analogue of the spt-function, namely $\textup{spt}(n, N)$, we prove an inequality between the finite second rank and crank moments. The other results obtained include finite analogues of a recent identity of Garvan, an identity relating $d(n, N)$ and lpt$(n, N)$, namely the finite analogues of the divisor and largest parts functions respectively, and a finite analogue of the Beck-Chern theorem. We also conjecture an inequality between the finite analogues of $k^{\textup{th}}$ rank and crank moments.
\end{abstract}
\maketitle
\tableofcontents
\section{Introduction}\label{intro}

The connection between basic hypergeometric series and generating functions of the divisor functions has been well explored. The earliest reference to it is probably of Kluyver \cite{kluyver} who proved that for $q\in\mathbb{C}, |q|<1$,
\begin{align}\label{Uchimura0}
\sum_{n=1}^{\infty} \frac{(-1)^{n-1} q^{\frac{n(n+1)}{2} } }{(1-q^n) ( q)_n  } = \sum_{n=1}^{\infty} \frac{ q^n }{1-q^n},
\end{align}
where, the notation followed here and throughout the sequel, is
\begin{align*}
&(A)_0:=(A;q)_0 =1;\quad (A)_n :=(A;q)_n  = (1-A)(1-Aq)\cdots(1-Aq^{n-1}),\quad n \geq 1,\\
&(A)_{\infty} :=(A;q)_{\i}  = \lim_{n\to\i}(A;q)_n\hspace{3mm} (|q|<1)\nonumber.
\end{align*}
The $(A;q)_n$ is called the $q$-shifted factorial and $q$ its base. When we simultaneously work with $q$-shifted factorials having different bases, we generally suppress the base $q$ in those $q$-shifted factorials having base $q$, but explicitly denote the bases other than $q$ in the other $q$-shifted factorials.

Equation \eqref{Uchimura0} was rediscovered by Fine \cite[p.~14, Equations (12.4), (12.42)]{fine}, and by Uchimura \cite[Theorem 2]{uchimura81} who also found an additional representation, namely,
\begin{align}\label{Uchimura}
\sum_{n=1}^{\infty} n q^n (q^{n+1} )_\infty = \sum_{n=1}^{\infty} \frac{(-1)^{n-1} q^{\frac{n(n+1)}{2} } }{(1-q^n) ( q)_n  } = \sum_{n=1}^{\infty} \frac{ q^n }{1-q^n}.
\end{align}
Identities such as these have been shown to have beautiful combinatorial interpretations arising from the theory of partitions. Before commencing on these, following are the notations used throughout the paper:

\begin{itemize}
\item $\pi$: an integer partition,

\item $|\pi|$: sum of the parts of $\pi$,

\item $p(n)$: the number of integer partitions of $n$,

\item $ s(\pi):=$ the smallest part of $\pi$,

\item $l(\pi):=$ the largest part of $\pi$,

\item $\#(\pi):=$ the number of parts of $\pi$,

\item $\mathrm{rank}(\pi)= l(\pi)- \#(\pi)$,
%\item $L(\pi):=$ total number of appearances of the largest part of $\pi$,
  
\item $\nu_{d}(\pi):=$ the number of parts of $\pi$ not counting multiplicity,

\item $\mathcal{P}(n):=$ collection of all integer partitions of $n$,

\item $\mathcal{D}(n):=$ collection of partitions of $n$ into distinct parts,
%\item $\mathcal{P}_{o}(n):=$ collection of all overpartitions of $n$,
%\item $\mathcal{P}^{*}(n):=$ collection of partitions without gaps (that is, partitions into consecutive integers with smallest part $1$),

\item $p(n, N) := $ the number of integer partitions of $n$ such that $l(\pi) \leq N$,

\item $\mathcal{P}(n, N):=$ collection of all integer partitions of $n$ whose largest parts are $\leq N$. 
\end{itemize}
Bressoud and Subbarao \cite{bresub} showed that the equality of the first and the last expressions of \eqref{Uchimura} implies
\begin{align}\label{ffwidty}
\sum_{ \pi \in \mathcal{D}(n)  } (-1)^{ \# (\pi)-1 } s(\pi)=d(n),
\end{align}
where $d(n)$ denotes the number of divisors of $n$. Fokkink, Fokkink and Wang \cite{ffw95} also rediscovered this result by applying a beautiful combinatorial argument on a sequence of polynomials. Bressoud and Subbarao \cite{bresub} also generalized \eqref{ffwidty} for $\sigma_m(n)$, the sum of $m^{\textup{th}}$ powers of the divisors of $n$.

The finite analogues of identities such as \eqref{Uchimura} have also been well-studied. For example, van Hamme found that \cite{hamme} 
\begin{align}\label{hammeid}
\sum_{n=1}^{N} \frac{q^n}{1 - q^n} = \sum_{n=1}^{N}\left[\begin{matrix} N\\n\end{matrix}\right]\frac{(-1)^{n-1} q^{n(n+1)/2}}{(1-q^n)},
\end{align}
where
\begin{align*}
\left[\begin{matrix} N\\n\end{matrix}\right]=\left[\begin{matrix} N\\n\end{matrix}\right]_q :=\begin{cases}
\frac{(q;q)_N}{(q;q)_n (q;q)_{N-n}},\hspace{2mm}\text{if}\hspace{1mm}0\leq n\leq N,\\
0,\hspace{2mm}\text{otherwise},
\end{cases} 
\end{align*}
is the $q$-binomial coefficient. Letting $N\to\infty$ in \eqref{hammeid} easily gives the second equality in \eqref{Uchimura}. 
Guo and Zeng \cite[Equation (3.8)]{guozeng2015} obtained a finite analogue of the first and the last expressions in \eqref{Uchimura}, namely,
\begin{align}\label{uchigen}
\sum_{n=1}^{N} \frac{q^n}{1 - q^n} = \sum_{n=1}^\infty n q^n \left(  q^{n+1} \right)_{N-1} -\sum_{n=1}^\infty n q^{n+N} \left(  q^{n+1} \right)_{N-1},
\end{align}
and gave a refinement of \eqref{ffwidty} as follows.  Let $d(n, N)$ denote the number of divisors of $n$ which are less than or equal to $N$. Then
\begin{align}\label{guozeng}
 d(n, N) = t(n, N) - t(n-N, N),
 \end{align}
 where 
\begin{equation}\label{tnN}
t(n, N) := \sum_{\pi \in \mathcal{D}(n, N)} (-1)^{\#(\pi)-1} s(\pi),
\end{equation}
and $\mathcal{D}(n, N)$ is the collection of partitions of $n$ into distinct parts such that $l(\pi)-s(\pi) \leq N-1$. It is surprising that along with \eqref{guozeng}, Guo and Zeng do not give a combinatorial interpretation of the right-hand side of \eqref{hammeid}. However, as one might guess, \eqref{guozeng} itself is the combinatorial interpretation of \eqref{hammeid}. Indeed, the right-hand side of \eqref{hammeid} can be written in the form
\begin{align*}
&\sum_{n=1}^{N}\left[\begin{matrix} N\\n\end{matrix}\right]\frac{(-1)^{n-1} q^{n(n+1)/2}}{(1-q^n)}\nonumber\\
&=\sum_{n=1}^{N}\sum_{k=1}^{\infty}\left[\begin{matrix} N-1\\n-1\end{matrix}\right]k(-1)^{n-1}q^{nk+n(n-1)/2}-q^N\sum_{n=1}^{N}\sum_{k=1}^{\infty}\left[\begin{matrix} N-1\\n-1\end{matrix}\right]k(-1)^{n-1}q^{nk+n(n-1)/2}.
\end{align*}
Now from the fact \cite[p.~33]{gea1998} that $\left[\begin{matrix} N-1\\n-1\end{matrix}\right]$ is the generating function of the number of partitions of an integer into at most $n-1$ parts each less than or equal to $N-n$, one can see that 
\begin{equation*}
\left[\begin{matrix} N-1\\n-1\end{matrix}\right]q^{nk}\cdot q^{n(n-1)/2}
\end{equation*}
is the generating function for partitions into $n$ distinct parts with smallest part $k$ such that 
\begin{equation*}
l(\pi)-s(\pi)\leq (N-n)+n-1=N-1.
\end{equation*}
Thus the right-hand side of \eqref{hammeid} is the generating function of $t(n, N)-t(n-N,N)$. This establishes \eqref{guozeng} since $\sum_{n=1}^{N} \frac{q^n}{1 - q^n}$ generates $d(n, N)$.

It must be mentioned here that along with the finite analogues \eqref{hammeid} and \eqref{uchigen}, there also exists a one-variable generalization of \eqref{Uchimura} in the literature. In fact, it is an identity in Ramanujan's Notebook \cite[p.~264, Entry 4]{bcbramforthnote}, \cite[p.~354]{ramanujanoriginalnotebook2}, \cite[p.~302-303]{ramanujantifr}, namely,
\begin{equation}\label{Garvan's identity}
\sum_{n=1}^{\infty} \frac{(-1)^{n-1} z^n q^{\frac{n(n+1)}{2} } }{(1-q^n) (z q)_n  } = \sum_{n=1}^{\infty} \frac{z^n q^n }{1-q^n},
\end{equation}
where $z\neq q^{-n}, n\geq 1$. It was rediscovered by Uchimura \cite[Equation (3)]{uchimura81} and Garvan \cite{garvan0}. 

Identity \eqref{Garvan's identity} was recently generalized further by the first and the third authors in \cite[Theorem 2.2]{dixitmaji18} by obtaining for $|zq|<1$ and $|c|\leq 1$,
\begin{align}\label{gen of Garavan}
\sum_{n=1}^{\infty} \frac{(-1)^{n-1} z^n q^{\frac{n(n+1)}{2} } }{(1-c q^n) (z q)_n  } = \frac{z}{c}\sum_{n=1}^{\infty}\frac{(zq/c)_{n-1}}{(zq)_n}(cq)^n.
\end{align}
The case $z=1$ of this identity, with the right-hand side expressed as a $q$-product by $q$-binomial theorem, was previously obtained by Yan and Fu \cite{yanfunanjing} and was rediscovered by Andrews, Garvan and Liang \cite[Theorem 3.5]{agl13} by generalizing the left-hand side of \eqref{ffwidty} to
\begin{equation*}
\textup{FFW}(c,n):=\sum_{\pi\in\mathcal{D}(n)}(-1)^{\#(\pi)-1}\left(1+c+\cdots+c^{s(\pi)-1}\right).
\end{equation*}
For an up-to-date history of these and other such identities, we refer the reader to \cite{dixitmaji18}. 

Ismail and Stanton \cite{ismailstantonanncomb} observed that the genesis of such identities is in the theory of basic hypergeometric functions. Indeed, as an application of a ${}_3\phi_{2}$-transformation \cite[p.~359, (III.9)]{gasperrahman}, the result
\begin{align}\label{entry3gen}
\sum_{n =1}^{\infty}  \frac{ (b/a)_n a^n }{ (1- c q^n) (b)_n } =  \sum_{m=0}^{\infty}\frac{(b/c)_mc^m}{(b)_m}\left(\frac{aq^m}{1-aq^m}-\frac{bq^m}{1-bq^m}\right)
\end{align}
was obtained in \cite[Theorem 2.1]{dixitmaji18} for $|a|<1$, $|b|<1$ and $|c|\leq 1$. Then \eqref{gen of Garavan} was derived from it as a special case by letting $a\to 0$ and replacing $b$ by $zq$. The richness of partition-theoretic information embedded in \eqref{gen of Garavan} and other related identities are demonstrated in the same paper.

One of the goals of this paper is to obtain a finite analogue of \eqref{entry3gen}. As we shall see, this finite analogue gives many important corollaries, one of which is a source for most of the content in the sequel. We begin with the finite analogue of \eqref{entry3gen}.
\begin{theorem}\label{finmainabc}
Let $N\in\mathbb{N}$. For $a, b, c\neq q^{-n}, 1\leq n\leq N-1$, and $c\neq q^{-N}$,
\begin{align}\label{finmainabceqn}
\sum_{n=1}^{N}\left[\begin{matrix} N\\n\end{matrix}\right]\frac{(\frac{b}{a})_{n}(q)_{n}(a)_{N-n}a^{n}}{(1-cq^{n})(b)_n(a)_N}=
\sum_{n=1}^{N}\left[\begin{matrix} N\\n\end{matrix}\right]\frac{(\frac{b}{c})_{n-1}(q)_n (cq)_{N-n}c^{n-1}}{(b)_{n-1}(cq)_N}\left(\frac{aq^{n-1}}{1-aq^{n-1}}-\frac{bq^{n-1}}{1-bq^{n-1}}\right).
\end{align}
\end{theorem}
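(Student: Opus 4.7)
My plan is to recast both sides of \eqref{finmainabceqn} as balanced terminating ${}_4\phi_3$ basic hypergeometric series and then link them by a Sears-type transformation. This is the terminating analogue of the derivation of \eqref{entry3gen} in \cite{dixitmaji18} from the non-terminating ${}_3\phi_2$ transformation (III.9) of Gasper--Rahman.

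For the left-hand side I would use the standard Pochhammer-inversion identities
\begin{equation*}
\left[\begin{matrix} N\\n\end{matrix}\right]=\frac{(q^{-N};q)_n(-q^N)^n q^{-\binom{n}{2}}}{(q;q)_n},\qquad \frac{(a)_{N-n}}{(a)_N}=\frac{(-1)^n q^{-nN+n+\binom{n}{2}}}{a^n(q^{1-N}/a;q)_n},
\end{equation*}
so that the extraneous $q$-powers and signs in the summand telescope cleanly. Writing $1/(1-cq^n)=(c;q)_n/[(1-c)(cq;q)_n]$ and adjoining the (trivial) $n=0$ term then yields
\begin{equation*}
1+(1-c)\,\mathrm{LHS}={}_4\phi_3\!\left[\begin{matrix} q^{-N},\,b/a,\,c,\,q \\ q^{1-N}/a,\,b,\,cq\end{matrix};q,q\right],
\end{equation*}
which one checks is Saalschützian (balanced).

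For the right-hand side, I would first consolidate the bracket as $(a-b)q^{n-1}/[(1-aq^{n-1})(1-bq^{n-1})]$, rewrite $1/(1-aq^{n-1})=(a)_{n-1}/(a)_n$ (and likewise for $b$), and reindex $n\mapsto n+1$; this shift is precisely what converts $q^{-N}$ into $q^{1-N}$ in the resulting series. Applying the same inversion identities to $(cq)_{N-n-1}/(cq)_N$ leads, after simplification, to
\begin{equation*}
\mathrm{RHS}=\frac{(a-b)(1-q^{-N})}{c(1-a)(1-b)(1-c^{-1}q^{-N})}\cdot{}_4\phi_3\!\left[\begin{matrix} q^{1-N},\,a,\,b/c,\,q \\ aq,\,bq,\,c^{-1}q^{1-N}\end{matrix};q,q\right],
\end{equation*}
which is again Saalschützian.

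The final step is to invoke Sears' ${}_4\phi_3$ transformation (Gasper--Rahman (III.15), or an iterate) applied to the LHS series with the invariant Sears parameter chosen as $c$; the transformed series then has the parameter set of the RHS series, and the composite Sears pre-factor, once multiplied by $1/(1-c)$, collapses to $(a-b)(1-q^{-N})/[c(1-a)(1-b)(1-c^{-1}q^{-N})]$. The main obstacle is the bookkeeping in this last step: selecting the correct permutation of the six Sears parameters so that the transformed series matches on the nose, and then algebraically reducing the product of $q$-Pochhammer ratios to the elementary pre-factor claimed. A useful sanity check is the case $N=1$, where both sides collapse to $(1-q)(a-b)/[(1-cq)(1-a)(1-b)]$.
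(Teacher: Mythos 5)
Your reduction of both sides to balanced terminating ${}_4\phi_3$ series is correct, and the underlying engine is the right one: the paper's own proof is precisely an application of the Sears transformation, in the form of \cite[p.~70, Equation (3.2.1)]{gasperrahman}. However, the final step of your plan has a genuine gap: the two series you arrive at cannot be linked by Sears' transformation or any iterate of it. First, your left-hand series has terminating parameter $q^{-N}$ (it runs over $0\leq n\leq N$) while your right-hand series has terminating parameter $q^{1-N}$ (it runs over $0\leq n\leq N-1$); every member of the Sears orbit of a terminating balanced ${}_4\phi_3$ carries the same terminating parameter, so no permutation of the six Sears parameters can convert one parameter set into the other. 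Second, and decisively, the relation that actually holds between your two series is inhomogeneous: writing $\Phi_1:=1+(1-c)\,\mathrm{LHS}$ and $\mathrm{RHS}=P\,\Phi_2$ with $P$ your stated pre-factor, the theorem is equivalent to $\Phi_1=1+(1-c)P\,\Phi_2$, and the additive $1$ cannot be produced by any two-term transformation $\Phi_1=P'\,\Phi_2$ with $P'$ a ratio of $q$-shifted factorials. Your own sanity check at $N=1$ exposes this: there $\Phi_2=1$, so your proposed transformation would force
\begin{equation*}
\Phi_1=\frac{(1-c)(a-b)(1-q)}{(1-a)(1-b)(1-cq)},
\end{equation*}
whereas in fact $\Phi_1=1+\dfrac{(1-c)(a-b)(1-q)}{(1-a)(1-b)(1-cq)}$. (The identity itself does hold at $N=1$; it is the claimed factorization through Sears that fails.)

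The repair is to apply the ${}_4\phi_3$ transformation \emph{before} completing the left-hand sum with an $n=0$ term, which is what the paper does. It takes \cite[Equation (3.2.1)]{gasperrahman} with $A=q$, $B=bq/a$, $C=cq$, $D=bq$, $E=cq^{2}$, so that both balanced series run over $0\leq n\leq N$ and are related homogeneously by the pre-factor $(cq)_N(aq)_N/\left((cq^{2})_N(a)_N\right)$; only afterwards does one multiply through by $(a-b)(1-q^{N+1})/(1-b)$, shift $n\mapsto n-1$ on both sides, and replace $N$ by $N-1$. Under this shift the $n=0$ terms of both series become honest $n=1$ terms of \eqref{finmainabceqn} rather than a stray additive constant, and the degree mismatch you would otherwise face never arises. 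If you reorganize your computation so that the index shift happens first, your argument becomes essentially the paper's.
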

Letting $a\rightarrow 0$, replacing $b$ by $zq$ in Theorem \ref{finmainabc}, we obtain a finite analogue of \eqref{gen of Garavan}. We record it separately as a theorem as it will be frequently used in the sequel.
\begin{theorem}\label{fingGaravan}
Let $N\in\mathbb{N}$. For $z, c\neq q^{-n}, 1\leq n\leq N$,
\begin{equation}\label{fingen of Garavan}
\sum_{n=1}^{N}\left[\begin{matrix} N\\n\end{matrix}\right]\frac{(-1)^{n-1}z^nq^{\frac{n(n+1)}{2}}(q)_{n}}{(1-cq^{n})(zq)_n}=
\frac{z}{c}\sum_{n=1}^{N}\left[\begin{matrix} N\\n\end{matrix}\right]\frac{(zq/c)_{n-1}(q)_{n} (cq)_{N-n} (cq)^{n}}{(zq)_{n}(cq)_{N}}.
\end{equation}
\end{theorem}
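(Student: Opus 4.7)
The plan is to derive Theorem \ref{fingGaravan} directly from Theorem \ref{finmainabc} by the substitutions the authors announce just before the statement, namely $b\mapsto zq$ followed by $a\to 0$. The identity is an equality of rational functions in $a$, $b$, $c$ for fixed $N$, so once $b=zq$ is set, the limit $a\to 0$ amounts to evaluating each side at $a=0$ after first clearing the indeterminate forms produced by the factor $(b/a)_n a^n$.

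The first step is to rewrite the $a$-dependent ingredients on the left-hand side of \eqref{finmainabceqn} in a form that is manifestly finite at $a=0$. Pulling $a^n$ into the product gives
\begin{align*}
(b/a)_n\, a^n = \prod_{k=0}^{n-1}\bigl(a-bq^k\bigr),
\end{align*}
which at $a=0$ and with $b=zq$ becomes $\prod_{k=0}^{n-1}(-zq^{k+1})=(-z)^n q^{n(n+1)/2}$. Simultaneously $(a)_{N-n}\to 1$ and $(a)_N\to 1$, so the left-hand side collapses to
\begin{align*}
\sum_{n=1}^{N}\left[\begin{matrix} N\\n\end{matrix}\right]\frac{(-1)^n z^n q^{n(n+1)/2}(q)_n}{(1-cq^n)(zq)_n}.
\end{align*}
This is, up to an overall sign, the left-hand side of \eqref{fingen of Garavan}; the missing $(-1)^{n-1}$ will arise from a cancelling minus on the right.

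For the right-hand side I would handle the parenthesized bracket first. With $b=zq$ the term $aq^{n-1}/(1-aq^{n-1})$ vanishes as $a\to 0$, while $bq^{n-1}/(1-bq^{n-1}) = zq^{n}/(1-zq^{n})$. Combining this factor with $(b)_{n-1}=(zq)_{n-1}$ and using the elementary $q$-Pochhammer identity $(zq)_{n-1}\,(1-zq^n)=(zq)_n$, the right-hand side of \eqref{finmainabceqn} becomes
\begin{align*}
-z\sum_{n=1}^{N}\left[\begin{matrix} N\\n\end{matrix}\right]\frac{(zq/c)_{n-1}\,(q)_n\,(cq)_{N-n}\, c^{n-1} q^n}{(zq)_n\,(cq)_N}
= -\,\frac{z}{c}\sum_{n=1}^{N}\left[\begin{matrix} N\\n\end{matrix}\right]\frac{(zq/c)_{n-1}\,(q)_n\,(cq)_{N-n}\,(cq)^n}{(zq)_n\,(cq)_N},
\end{align*}
where in the last step I have written $c^{n-1}q^n = c^{-1}(cq)^n$. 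Multiplying both sides of the resulting identity by $-1$ converts the $(-1)^n$ on the left into the advertised $(-1)^{n-1}$ and absorbs the minus sign on the right, yielding \eqref{fingen of Garavan}.

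The only real subtlety is bookkeeping: one must verify that the limit $a\to 0$ is legitimate termwise (each summand is a rational function of $a$ regular at $0$ under the stated genericity on $c$ and $z$) and that the Pochhammer manipulations are done consistently. I do not anticipate any deep obstacle; the main temptation to avoid is expanding $(b/a)_n$ before absorbing $a^n$, which would leave one chasing cancellations of poles at $a=0$ rather than simply evaluating a polynomial. With that in mind the derivation is purely computational.
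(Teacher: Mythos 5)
Your proposal is correct and is exactly the route the paper takes: the authors obtain Theorem \ref{fingGaravan} from Theorem \ref{finmainabc} precisely by setting $b=zq$ and letting $a\to 0$, and your computation of $(b/a)_n a^n \to (-z)^n q^{n(n+1)/2}$ on the left and of the surviving term $-zq^n/(1-zq^n)$ on the right, followed by absorbing $(zq)_{n-1}(1-zq^n)=(zq)_n$ and $c^{n-1}q^n=c^{-1}(cq)^n$, fills in the details the paper leaves to the reader. No gaps.
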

Another corollary of Theorem \ref{finmainabc}, which generalizes an identity of Ramanujan \cite[p.~355]{ramanujanoriginalnotebook2}, \cite[p.~265, Entry 5]{bcbramforthnote}, \cite[p.~302-303]{ramanujantifr} is discussed in Section \ref{prooffinmainabc}.

The interesting special cases of Theorem \ref{fingGaravan}, which include finite analogues of Ramanujan's identity \eqref{Garvan's identity}, Yan and Fu's identity \cite[p.~116, Equation (4)]{yanfunanjing} as well as that involving a generalization of the finite analogue of Ramanujan's celebrated function 
\begin{equation}\label{sigq}
\sigma(q):=\sum_{n=0}^{\infty} \frac{q^{n(n+1)/2}}{(-q)_n},
\end{equation}
are discussed in Section \ref{prooffinmainabc}.

In a recent paper, Garvan \cite[Equation (1.3)]{garvan1} derived an interesting identity and gave combinatorial implications of its corollaries. For $|z|\leq 1$ and $|q|<1$, this identity is
\begin{align}\label{new identity}
\sum_{n=1}^{\infty}  \frac{(-1)^{n-1} z^n q^{n^2} }{(zq;q^2)_{n} (1-z q^{2n}) } 
%= \sum_{n=1}^{\infty}\frac{(q;q^2)_{n-1}z^{n} q^{2 n - 1 }}{(zq;q^2)_{n}}\nonumber\\
%&=\sum_{n=1}^{\infty} \frac{(q^2; q^2)_{n-1} z^nq^{n}}{(z q^2; q^2)_{n}}
=\sum_{n=1}^{\infty} \frac{z^n q^{ \frac{n(n+1)}{2}} (q;q)_{n-1} }{ (zq; q)_n  }.
\end{align}
A  natural proof of Garvan was obtained in \cite[Equations (6.1), (6.4), (6.6)]{dixitmaji18} using \eqref{gen of Garavan}. Theorem \ref{fingGaravan} can be used to obtain a finite analogue of Garvan's identity, namely, 
\begin{theorem}\label{fingithm}
Let $N\in\mathbb{N}$. For $z\neq q^{-n}, 1\leq n\leq 4N-1$,
\begin{align}
&\sum_{n=1}^{N}\left[\begin{matrix} N\\n\end{matrix}\right]_{q^2}\frac{(-1)^{n-1}z^nq^{n^2}(q^2;q^2)_{n}}{(zq;q^2)_n(1-zq^{2n})}\nonumber\\
%=\sum_{n=1}^{N}\left[\begin{matrix} N\\n\end{matrix}\right]_{q^2}\frac{(q;q^2)_{n-1}(zq^2;q^2)_{N-n}(q^2;q^2)_{n} z^{n}q^{2n-1}}{(zq;q^2)_{n}(zq^2;q^2)_{N}}\label{ga1s}\\
%&=\sum_{n=1}^{N} \left[\begin{matrix} N\\n\end{matrix}\right]_{q^2}\frac{(q^2;q^2)_{n-1}(zq;q^2)_{N-n}(q^2;q^2)_{n} z^nq^{n}}{(zq^2;q^2)_{n}(zq;q^2)_{N} }\label{ga2s}\\
&=\sum_{n=1}^{N}\left[\begin{matrix} N\\n\end{matrix}\right]_{q^2}\left(\frac{(q;q)_{2n-2}z^{2n-1}q^{n(2n-1)}}{(zq;q)_{2n-1}}+\frac{(q;q)_{2n-1}z^{2n}q^{n(2n+1)}}{(zq;q)_{2n}}\right)\frac{(q^{2};q^2)_n}{(zq^{2N+1};q^2)_{n}}.\label{ga3s}
\end{align}
%Moreover, the first three expressions are equal to each other for $z=q^{-n}, 2N+1\leq n\leq 4N-1$.
\end{theorem}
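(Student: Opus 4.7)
The plan is to imitate, in finite form, the natural proof of Garvan's identity \eqref{new identity} given in \cite[Equations (6.1), (6.4), (6.6)]{dixitmaji18}, with each appeal to \eqref{gen of Garavan} replaced by the finite analogue Theorem \ref{fingGaravan}.

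First, apply Theorem \ref{fingGaravan} with the substitutions $q \mapsto q^2$, $z \mapsto z/q$, and $c \mapsto z$. The choice is engineered so that $(-1)^{n-1}(z/q)^n q^{2\cdot n(n+1)/2}=(-1)^{n-1}z^n q^{n^2}$ and $((z/q)q^2;q^2)_n=(zq;q^2)_n$, making the left-hand side of the resulting identity coincide exactly with the left-hand side of \eqref{ga3s}. After simplifying the scalar prefactor $(z/q)/z=1/q$ and the Pochhammer $((z/q)q^2/z;q^2)_{n-1}=(q;q^2)_{n-1}$, one obtains the intermediate formula
\begin{equation*}
\textup{LHS of \eqref{ga3s}}=\sum_{n=1}^{N}\left[\begin{matrix} N\\n\end{matrix}\right]_{q^2}\frac{(q;q^2)_{n-1}(q^2;q^2)_n\,z^n q^{2n-1}(zq^2;q^2)_{N-n}}{(zq;q^2)_n(zq^2;q^2)_N}.
\end{equation*}

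Next, rewrite the right-hand side of \eqref{ga3s} in a more compact form. Using the factorisations $(q;q)_{2n-2}=(q;q^2)_{n-1}(q^2;q^2)_{n-1}$, $(q;q)_{2n-1}=(q;q^2)_n(q^2;q^2)_{n-1}$, $(zq;q)_{2n-1}=(zq;q^2)_n(zq^2;q^2)_{n-1}$ and $(zq;q)_{2n}=(zq;q^2)_n(zq^2;q^2)_n$, the two expressions inside the parentheses of the right-hand side of \eqref{ga3s} combine to
\begin{equation*}
\frac{(q;q^2)_{n-1}(q^2;q^2)_{n-1}\,z^{2n-1}q^{n(2n-1)}(1-zq^{4n-1})}{(zq;q^2)_n(zq^2;q^2)_n}.
\end{equation*}
The theorem is thereby reduced to an auxiliary equality of two finite sums in base $q^2$, each indexed by $1\leq n\leq N$, with differing \emph{boundary} denominator patterns: $(zq^{2N-2n+2};q^2)_n$, arising from $(zq^2;q^2)_{N-n}/(zq^2;q^2)_N$ on one side, versus $(zq^{2N+1};q^2)_n$ on the other.

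This auxiliary identity is the finite analogue of \cite[Equation (6.4)]{dixitmaji18} and is established by a second application of Theorem \ref{fingGaravan} with a suitably chosen pair of parameters, followed by elementary rearrangement based on the splitting $(zq;q^2)_{N+n}=(zq;q^2)_N(zq^{2N+1};q^2)_n$. The principal obstacle is concentrated here: in the limit $N\to\infty$ both boundary factors tend to $1$ and this step becomes invisible (Garvan's identity \eqref{new identity} is recovered at once), but in the finite setting one must pin down precisely the second specialisation of Theorem \ref{fingGaravan} that bridges the two denominator patterns and then re-index the sums so that the decomposition into the $(2n-1)$-th and $2n$-th Garvan terms emerges cleanly.
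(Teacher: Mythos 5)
Your first step is exactly the paper's: the specialization $q\mapsto q^2$, $z\mapsto z/q$, $c\mapsto z$ of Theorem \ref{fingGaravan} is precisely how the paper arrives at \eqref{ee-3}, and your recombination of the two terms on the right of \eqref{ga3s} into a single summand with the factor $(1-zq^{4n-1})$ is algebraically correct (it is the paper's Lemma \ref{1.7.2finthm} read backwards). The problem is that everything after that is asserted rather than proved, and the tool you nominate for the remaining work is the wrong one. The auxiliary identity you reduce to --- equating a sum whose summand has the \emph{linear} exponent $z^nq^{2n-1}$ and boundary factor $(zq^{2N-2n+2};q^2)_n$ with a sum whose summand has the \emph{quadratic} exponent $z^{2n-1}q^{n(2n-1)}$, the factor $(1-zq^{4n-1})$, and boundary factor $(zq^{2N+1};q^2)_n$ --- is not a specialization of Theorem \ref{fingGaravan}. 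No choice of $z,c$ in \eqref{fingen of Garavan} (even in base $q^2$) produces a summand containing a factor of the shape $(1-\alpha\tau q^{2n+1})$; that factor is the signature of a Rogers--Fine-type identity, which lives in a different family.

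What the paper actually needs here is two separate nontrivial inputs. First, Lemma \ref{interlemma} converts the right side of \eqref{ee-3} into the form $S_1(z,q,N)$ of \eqref{s1zqn}; its proof uses the ${}_3\phi_2$ transformation \eqref{corfht}, a corollary of Andrews' finite Heine transformation, together with \eqref{ee1}. Second, Lemma \ref{1.7.2finthm} converts $S_1(z,q,N)$ into the split form on the right of \eqref{ga3s}; its proof rests on the finite Rogers--Fine identity (Lemma \ref{frfi}), which is itself extracted from Watson's $q$-analogue of Whipple's theorem \eqref{watson87} by the specialization $a=\tau\alpha q$, $b=\alpha\tau q/\beta$, $d=\alpha q$, $e=q$, $c\to\infty$, followed by the choice $a=zq=-b$. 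You correctly identify that the whole difficulty is concentrated in bridging the two boundary patterns, but identifying the difficulty is not the same as resolving it: as written, your proof has no argument for the central identity, and the suggested route (a second application of Theorem \ref{fingGaravan}) cannot supply one.
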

\begin{remark}
Letting $N\to\infty$ in the above theorem gives \eqref{new identity}, for, $\displaystyle\lim_{N\to\infty}\left[\begin{matrix} N\\n\end{matrix}\right]_{q^2}\frac{(q^{2};q^2)_n}{(zq^{2N+1};q^2)_{n}}=1$ and then the two expressions inside the parenthesis on the right-hand side of \eqref{ga3s} beautifully combine resulting in $\displaystyle\sum_{n=1}^{\infty} \frac{z^n q^{ \frac{n(n+1)}{2}} (q;q)_{n-1} }{ (zq; q)_n  }$.
\end{remark}
We now state an important result which, as alluded to above, is the genesis of the most of the content of this paper. This result is obtained by taking the first derivative of \eqref{fingen of Garavan} with respect to $z$ and then letting $z\to 1$. It is a finite analogue of Theorem 2.8 from \cite{dixitmaji18}.
\begin{theorem}\label{fingenc}
Let Fine's function $F(a,b;t)$ be defined by \cite[p.~1]{fine}
\begin{equation}\label{finefunction}
F(a,b;t):=\sum_{n=0}^{\infty}\frac{(a q)_n}{(b q)_n}t^n.
\end{equation}
Let $N\in\mathbb{N}$. Then for $|q|<1, |c|<1/|q|, c\neq q^{-n}, 0\leq n\leq N$,
\begin{align}\label{finanalogmaineqn}
 & \frac{1}{(q)_N}\sum_{n = 1}^{N} \left[\begin{matrix} N \\ n \end{matrix}\right]\frac{(-1)^{n - 1}nq^{n(n + 1)/2}}{1 - cq^n} + 
\sum_{n = 1}^{N}\left[\begin{matrix} N \nonumber \\ n \end{matrix}\right] \frac{q^{n(n + 1)}}{(q)_n (1 - q^n)} F\left(q^N, q^n; cq^n\right) \\ 
& = \ \frac{c}{(1 - c)^2 (q)_N}\left(\frac{(q)_N}{(cq)_N} - 1\right) + \frac{1}{(c)_{N+1}}\sum_{n = 1}^{N}\frac{(cq)_n}{(q)_n}\frac{q^n}{1 - q^n}. 
\end{align}
\end{theorem}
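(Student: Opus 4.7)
The plan is to differentiate the identity \eqref{fingen of Garavan} of Theorem \ref{fingGaravan} with respect to $z$, evaluate at $z=1$, and divide by $(q)_N$. Under the hypotheses $|q|<1$ and $c\neq q^{-n}$ for $0\le n\le N$, each of the factors $(zq)_n$, $(cq)_N$, and $1-cq^n$ is nonzero at $z=1$, so both sides of \eqref{fingen of Garavan} are analytic in $z$ in a neighborhood of $z=1$, and termwise differentiation of the finite sums is immediate.

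For the left-hand side $L(z)$ of \eqref{fingen of Garavan}, the logarithmic derivative identity
\begin{equation*}
\left.\frac{d}{dz}\!\left(\frac{z^n}{(zq)_n}\right)\right|_{z=1} = \frac{1}{(q)_n}\!\left(n + \sum_{k=1}^n \frac{q^k}{1-q^k}\right)
\end{equation*}
splits $L'(1)/(q)_N$ into two pieces. The $n$-piece reproduces the first term on the left-hand side of \eqref{finanalogmaineqn} verbatim. For the other piece, I would expand $1/(1-cq^n)=\sum_{m\ge 0}c^m q^{nm}$, interchange orders of summation, and regroup the resulting nested $q$-series to match the defining expansion of $F(q^N,q^n;cq^n)$ from \eqref{finefunction}; this reproduces the Fine-function term on the left-hand side of \eqref{finanalogmaineqn}.

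For the right-hand side $R(z)$ of \eqref{fingen of Garavan}, an analogous logarithmic computation gives
\begin{equation*}
\left.\frac{d}{dz}\!\left(\frac{(zq/c)_{n-1}}{(zq)_n}\right)\right|_{z=1} = \frac{(q/c)_{n-1}}{(q)_n}\!\left(\sum_{k=1}^n\frac{q^k}{1-q^k} - \sum_{k=1}^{n-1}\frac{q^k}{c-q^k}\right),
\end{equation*}
which, combined with the derivative of the $z/c$ prefactor and the telescoping identity
\begin{equation*}
\sum_{k=1}^n\frac{q^k}{1-q^k} - \sum_{k=1}^{n-1}\frac{q^k}{c-q^k} = \frac{q^n}{1-q^n} + (c-1)\sum_{k=1}^{n-1}\frac{q^k}{(1-q^k)(c-q^k)},
\end{equation*}
splits $R'(1)/(q)_N$ into two corresponding pieces. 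The first piece, gathering the $q^n/(1-q^n)$ contribution together with the $1/c$ from $\frac{d}{dz}(z/c)$, regroups (using $(c)_{N+1}=(1-c)(cq)_N$) into $\frac{1}{(c)_{N+1}}\sum_n \frac{(cq)_n q^n}{(q)_n(1-q^n)}$. The second piece carries the explicit $(c-1)$ factor and telescopes, via $\prod_{k=1}^N (1-q^k)/(1-cq^k)=(q)_N/(cq)_N$, into $\frac{c}{(1-c)^2(q)_N}\bigl((q)_N/(cq)_N-1\bigr)$.

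The main obstacle is the combinatorial bookkeeping in these two reorganizations. Extracting Fine's function on the left requires a careful interchange of the summations over $n$ and $k$ (after expansion of $1/(1-cq^n)$) and recognition of the inner sum as a ${}_2\phi_1$ series. Producing the $(1-c)^{-2}$-pole term on the right rests on telescoping the $c$-dependent denominators $c-q^k$ against the explicit $(c-1)$ prefactor, so that the seemingly singular behavior at $c=1$ ultimately cancels against the corresponding pole in $1/(c)_{N+1}$. As a consistency check, letting $N\to\infty$ in \eqref{finanalogmaineqn} recovers Theorem 2.8 of \cite{dixitmaji18}, whose proof follows the same differentiation scheme in the infinite setting.
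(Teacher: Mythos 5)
Your overall strategy---differentiating \eqref{fingen of Garavan} with respect to $z$ at $z=1$---is exactly the paper's, and your two logarithmic-derivative formulas are correct. However, the way you propose to finish the right-hand side contains a genuine error. You split the derivative of the right side into a piece carrying $\frac{1}{c}\left(1+\frac{q^n}{1-q^n}\right)$ and a piece carrying the explicit factor $(c-1)\sum_{k=1}^{n-1}\frac{q^k}{(1-q^k)(c-q^k)}$, and you assert that the first regroups into $\frac{1}{(c)_{N+1}}\sum_{n}\frac{(cq)_nq^n}{(q)_n(1-q^n)}$ while the second telescopes into $\frac{c}{(1-c)^2(q)_N}\bigl(\frac{(q)_N}{(cq)_N}-1\bigr)$. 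This correspondence is false. Already for $N=1$ the second piece is identically zero (the inner sum over $k$ is empty when $n=1$), whereas its alleged target $\frac{c}{(1-c)^2(1-q)}\bigl(\frac{1-q}{1-cq}-1\bigr)=\frac{-cq}{(1-c)(1-q)(1-cq)}$ is not; correspondingly the first piece equals $\frac{q}{(1-q)^2(1-cq)}$ rather than the claimed $\frac{q}{(1-c)(1-q)^2}$. The two discrepancies cancel in the sum, which is why the theorem is still true, but the intermediate identities you propose to prove are not provable as stated. The actual mechanism, as in the paper, is different: one applies the Guo--Zhang identity \eqref{m0} with $x=1/c$ to rewrite $\sum_{k=1}^n\frac{q^k}{1-q^k}-\sum_{k=1}^{n-1}\frac{q^k/c}{1-q^k/c}$ as the \emph{constant} $\frac{1}{c-1}$ minus a $q$-binomial double sum; the $\frac{c}{(1-c)^2}$ term then arises from adding $\frac{1}{c-1}S_1$ to $S_1$ (with $S_1$ evaluated via Corollary \ref{fin_AGL}), and the $\frac{1}{(c)_{N+1}}\sum_n\frac{(cq)_nq^n}{(q)_n(1-q^n)}$ term comes from the remaining double sum after an interchange of summation and a $q$-Chu--Vandermonde evaluation \eqref{Chu-Vandermonde}. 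Your elementary telescoping of $\frac{q^k}{1-q^k}-\frac{q^k}{c-q^k}$ does not substitute for this input.

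The left-hand side is also underdeveloped. Expanding $1/(1-cq^n)$ geometrically and ``regrouping'' still leaves you to prove, for each power of $c$, a nontrivial $q$-binomial identity involving the harmonic sum $\sum_{k=1}^n\frac{q^k}{1-q^k}$. The paper's Lemma \ref{Second_Fines} handles this by first replacing that harmonic sum via van Hamme's identity \eqref{hammeid}, interchanging the $n$ and $k$ sums, and recognizing the resulting inner sum as the partial-fraction expansion \eqref{fine16.3} of Fine's function. Some such identity is indispensable; it does not fall out of bookkeeping alone.
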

%Before we discuss several important corollaries resulting from Theorem \ref{fingenc}, we need to introduce important objects

\section{New results in the theory of the restricted partition function $p(n, N)$}\label{new}

Let $p(n, N)$ denote the number of partitions of $n$ whose largest parts are less than or equal to $N$. Although not as popular as the partition function $p(n)$ itself, the restricted partition function $p(n, N)$ has been studied by many mathematicians. Szekeres \cite{szekeres1}, \cite{szekeres2} proved an asymptotic formula for $p(n, N)$ whereas Almkvist and Andrews \cite{almand} obtained a Hardy-Ramanujan-Rademacher-type formula for it. Kronholm and Rehmert \cite[Theorem 1]{kronholmrehmert} obtained a general congruence for $p(n, N)$, namely, if $N$ is an odd prime, $k\geq 1, 1\leq j\leq\frac{N-1}{2}$, $\a\geq 1$, and if $\textup{lcm}(a)$ denotes the least common multiple of the natural numbers from $1$ to $a$, then
\begin{equation}\label{kronreh}
p(\textup{lcm}(N)N^{\a-1}k-jN,N)\equiv0\pmod{N^{\a}}.
\end{equation}
However, to the best of our knowledge, there isn't any literature on the partition statistics for $p(n, N)$ similar to that for $p(n)$.

The second goal of this paper is to extend the theory of $p(n, N)$ to include not only the corresponding rank and crank in terms of the vector partitions associated with $p(n, N)$, and the rank and crank moments, but also the associated smallest parts function which we denote by $\textup{spt}(n, N)$. 

\noindent
\underline{\textbf{Definition 1}} $\textup{spt}(n, N):=$ the number of smallest parts in all partitions of $n$ whose corresponding largest parts are less than or equal to $N$.

 Clearly, $\textup{spt}(n, 1)=n$, and for $n\leq N$, $\textup{spt}(n, N)=\textup{spt}(n)$.

The motivation for this extension of the theory of $p(n, N)$ lies in the fact that the following special case when $c\to 1$ of Theorem \ref{fingenc} is actually, as we shall see, the generating function version of the finite analogue of Andrews' identity for $\textup{spt}$-function (see Theorem \ref{fin_spt_identity} below).  
\begin{theorem}\label{fin_spt_q_identity}
Let $N\in\mathbb{N}$. Then for $|q|<1$,
\begin{align*}
 \frac{1}{(q)_N}\sum_{n = 1}^{N} \left[\begin{matrix} N \\ n \end{matrix}\right]\frac{(-1)^{n - 1}nq^{n(n + 1)/2}}{1 - q^n} & + \sum_{n = 1}^{N} \left[\begin{matrix} N \\ n \end{matrix}\right] \frac{q^{n^2}}{(q)_n} \sum_{k=1}^n \frac{q^k}{(1-q^k)^2 } = \frac{1}{(q)_N} \sum_{n=1}^{\infty} \frac{n q^n(1- q^{N n})}{1- q^n}. 
\end{align*}
\end{theorem}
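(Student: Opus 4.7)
The plan is to derive Theorem \ref{fin_spt_q_identity} as the limit $c\to 1$ of Theorem \ref{fingenc}. On the LHS of the latter, the first summand is regular at $c=1$ and matches the first summand of Theorem \ref{fin_spt_q_identity} directly; the second summand is also regular at $c=1$ but must be identified with $\sum_n \left[\begin{matrix}N\\n\end{matrix}\right]\frac{q^{n^2}}{(q)_n}\sum_{k=1}^n\frac{q^k}{(1-q^k)^2}$; while the RHS has two summands, each with a simple pole at $c=1$, whose residues cancel.

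For the RHS I set $c=1-\epsilon$ and Taylor expand around $c=1$. The required derivatives are
\begin{equation*}
\left.\frac{d}{dc}\right|_{c=1}\frac{(q)_N}{(cq)_N}=\sum_{k=1}^{N}\frac{q^k}{1-q^k}=:A,\qquad \left.\frac{d}{dc}\right|_{c=1}\frac{(cq)_n}{(q)_n}=-\sum_{k=1}^{n}\frac{q^k}{1-q^k}.
\end{equation*}
The $\epsilon^{-1}$ singularities from the two summands cancel, and the $\epsilon^0$ part simplifies, via $\tfrac{q^k}{1-q^k}+\tfrac{q^{2k}}{(1-q^k)^2}=\tfrac{q^k}{(1-q^k)^2}$, to $\tfrac{1}{(q)_N}\sum_{k=1}^N\tfrac{q^k}{(1-q^k)^2}$. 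Expanding $\tfrac{q^k}{(1-q^k)^2}=\sum_{j\ge 1}jq^{jk}$ and swapping orders gives the stated RHS $\tfrac{1}{(q)_N}\sum_{n=1}^\infty \tfrac{nq^n(1-q^{Nn})}{1-q^n}$.

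The remaining step is to show that at $c=1$ the second summand on the LHS of Theorem \ref{fingenc} equals $\sum_n \left[\begin{matrix}N\\n\end{matrix}\right]\frac{q^{n^2}}{(q)_n}\sum_{k=1}^n\frac{q^k}{(1-q^k)^2}$. I would expand $F(q^N,q^n;q^n)=\sum_{k\ge 0}\frac{(q^{N+1})_k}{(q^{n+1})_k}q^{nk}$ using $\frac{(q^{N+1})_k}{(q^{n+1})_k}=\frac{(q)_n(q)_{N+k}}{(q)_N(q)_{n+k}}$, swap the $(n,k)$ sums, and attempt to collapse the inner $n$-sum by a finite $q$-binomial identity; equivalently, I would iterate Fine's functional equation $F(a,b;t)(1-t)=1+\tfrac{(b-a)qt}{1-bq}F(a,bq;qt)$ with $a=q^N$, $b=t=q^n$, producing the recurrence $G_n=\tfrac{q^n}{(1-q^n)^2}+\tfrac{q^n(q^n-q^N)}{(1-q^n)^2}G_{n+1}$ with terminal value $G_N=q^N/(1-q^N)^2$ for $G_n:=\tfrac{q^n}{1-q^n}F(q^N,q^n;q^n)$, solve it explicitly, and resum against the $q$-binomial coefficients. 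The main obstacle is exactly this final identification: numerical checks show that the required equality does not hold termwise in $n$, so its proof demands a global resummation combining the $q$-binomial theorem with \eqref{hammeid}; this is what I expect to be the most delicate step.
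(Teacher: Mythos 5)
Your overall strategy coincides with the paper's: both take $c\to 1$ in Theorem \ref{fingenc}, and your treatment of the right-hand side by Taylor expansion about $c=1$ is a legitimate variant of the paper's route (the paper instead combines the two singular terms using $\sum_{n=0}^{N}(c)_nq^n/(q)_n=(cq)_N/(q)_N$, a consequence of \eqref{rowyee}, before passing to the limit; both roads lead to $\frac{1}{(q)_N}\sum_{k=1}^{N}\frac{q^k}{(1-q^k)^2}$, though note that the $\epsilon^0$ coefficient of the double-pole term also requires the \emph{second} derivative of $(q)_N/(cq)_N$ at $c=1$, which is absent from your list of ``required derivatives'').

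The genuine gap is the step you yourself flag as open: the identification
\begin{equation*}
\sum_{n = 1}^{N}\left[\begin{matrix} N \\ n \end{matrix}\right] \frac{q^{n(n + 1)}}{(q)_n (1 - q^n)} F\left(q^N, q^n; q^n\right)
=\sum_{n = 1}^{N} \left[\begin{matrix} N \\ n \end{matrix}\right] \frac{q^{n^2}}{(q)_n} \sum_{k=1}^n \frac{q^k}{(1-q^k)^2 }.
\end{equation*}
You correctly observe that this fails termwise in $n$ (already at $n=N$, where $F(q^N,q^N;q^N)=1/(1-q^N)$), so a global resummation is unavoidable; but you only sketch two candidate strategies without executing either, and the first one --- expanding $F$ by its defining series --- produces a non-terminating inner sum that does not collapse readily. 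The paper closes exactly this gap in \eqref{S}: it expands $F(q^N,q^n;q^n)$ via Fine's partial-sum representation $F(a,b;t)=\frac{1}{1-t}\sum_{j\ge0}\frac{(b/a)_j(-at)^jq^{j(j+1)/2}}{(bq)_j(tq)_j}$ with $a=q^N$, $b=t=q^n$, so that $(q^{n-N})_j$ forces termination at $j=N-n$; after applying \eqref{e1} the summand organizes itself as $q^{(j+n)^2}/\bigl((q)_{N-n-j}(q)_{n+j}^2\bigr)$ times $q^n/(1-q^n)^2$, whereupon the shift $j\mapsto j-n$ and an interchange of the two finite sums yield precisely $(q)_N\sum_{j=1}^{N}\frac{q^{j^2}}{(q)_{N-j}(q)_j^2}\sum_{n=1}^{j}\frac{q^n}{(1-q^n)^2}$. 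Without this (or an equivalent) computation your argument does not establish the theorem, since this identification is its analytic core.
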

However, before we proceed with the finite analogue of Andrews' identity, it makes sense to first introduce the finite analogues of rank, crank and their moments. The introduction of these new concepts when compared with the ones for $p(n)$, viewed historically, is like moving in the reverse direction, for, the rank and crank were introduced only after partition congruences modulo $5, 7$ and $11$ were found by Ramanujan with a need to explain as to why they exist. Nevertheless, as we shall see, the introduction of the finite analogues of rank and crank will be very fruitful in the development of the theory of $p(n, N)$.

Let $V_{1}=\mathcal{D}\times\mathcal{P}$ denote the set of vector partitions, where $\mathcal{D}$ is the set of partitions of a number into distinct parts and $\mathcal{P}$ is the set of unrestricted partitions. Denote an element $\vec{\pi}$ of $V_1$ by $(\pi_1, \pi_2)$, where $|\vec{\pi}|=|\pi_1|+|\pi_2|$. For any positive integer $N$ and $j$ with $1\leq j \leq N$, define $S_1$ to be
\begin{align}\label{S1}
%S_1:=\{\vec{\pi}=(\pi_1, \pi_2)\in V_1 :\hspace{1mm} |\vec{\pi}|=|\pi_1|+|\pi_2| \},
S_1&:=\bigg\{\vec{\pi}\in V_1: \pi_1\hspace{1mm}\text{is either an empty partition or such that its parts lie in}\hspace{1mm}[N-j+1, N]\hspace{1mm}\nonumber\\
&\quad\quad\text{and}\hspace{1mm}\pi_2\hspace{1mm}\text{is an unrestricted partition with its Durfee square of size}\hspace{1mm} j\bigg\}.
\end{align}
%such that $\pi_1$ is either an empty partition or such that its parts lie in $[N-j+1, N]$, and $\pi_2$ is an unrestricted partition with its Durfee square of size $j$. 
Let $w_r(\vec{\pi}) = (-1)^{\#(\pi_1)} $ be the weight of the vector partition $\vec{\pi}$ and its $\textup{rank}(\vec{\pi})=\textup{rank}(\pi_2)$. Now define 
\begin{equation}\label{ns1}
N_{S_1}(m, n):=\sum_{j=1}^N N_{S_1}\left(m, n; \boxed{j}\right),
\end{equation}
where
\begin{equation*}
N_{S_1}\left(m, n; \boxed{j}\right):= \sum_{\vec{\pi} \in S_1, |\vec{\pi}|=n \atop \mathrm{rank}(\vec{\pi})=m} w_r(\vec{\pi}),   
\end{equation*}
that is, $N_{S_1}\left(m, n; \boxed{j}\right)$ is the number of vector partitions of $n$ with rank $m$ and counted with weight $w_r(\vec{\pi})$, and with the size of the Durfee squares of $\pi_2$ equal to $j$. 

One can easily verify that for any fixed $j$, $N_{S_1}\left(m, n; \boxed{j}\right) =N_{S_1}\left(-m, n; \boxed{j}\right)$ and hence $N_{S_1}(m, n) =N_{S_1}(-m, n)$. For a fixed $j$, $1\leq j\leq N$, consider 
%\begin{equation*}
$\left[\begin{matrix} N \\ j \end{matrix}\right] \frac{q^{j^2} (q)_j }{(z q)_j (z^{-1} q)_j}$.
%\end{equation*}
Since $\left[\begin{matrix} N \\ j \end{matrix}\right](q)_j=(q^{N-j+1})_j$, it is easy to see that it generates partitions $\pi_1$ described in \eqref{S1}. Also, $\frac{q^{j^2} }{(z q)_j (z^{-1} q)_j}$ generates partitions $\pi_2$ with power of $z$ keeping track of its rank. Hence we have the following generating function for $N_{S_1}(m, n)$.
% It will be shown in Section that [ADD THE PROOF IN THE RELEVANT SECTION]
\begin{theorem}\label{frgfthm}
%We can prove that the generating function of $N_{S_1}\left(m, n; \boxed{j}\right)$ is the following 
Let $N\in\mathbb{N}$. Then
\begin{equation}\label{fin_rank_gen}
\sum_{j=1}^N \left[\begin{matrix} N \\ j \end{matrix}\right] \frac{q^{j^2} (q)_j }{(z q)_j (z^{-1} q)_j}=\sum_{n=1}^{\infty}\sum_{m=-\infty}^{\infty} N_{S_1}(m, n) z^m q^n.
\end{equation}
\end{theorem}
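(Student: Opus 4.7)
The plan is to decompose each summand on the left-hand side (indexed by $j$) into a product of two generating functions: one that enumerates the admissible $\pi_1$ with the sign weight $w_r$, and one that enumerates the admissible $\pi_2$ with $z$ tracking the rank. Summing over $j$ then collects contributions from $S_1$ stratified by Durfee square size, matching the definition \eqref{ns1}.

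First I would use the elementary identity
\begin{equation*}
\left[\begin{matrix} N\\j\end{matrix}\right](q)_j \; = \; (q^{N-j+1};q)_j \; = \; \prod_{i=1}^{j}\bigl(1-q^{N-j+i}\bigr),
\end{equation*}
and expand the product directly to obtain
\begin{equation*}
\left[\begin{matrix} N\\j\end{matrix}\right](q)_j \; = \; \sum_{\pi_1} (-1)^{\#(\pi_1)} q^{|\pi_1|},
\end{equation*}
where $\pi_1$ runs over the empty partition together with all partitions into distinct parts from the window $[N-j+1,N]$. Comparing with \eqref{S1}, this is exactly $\sum_{\pi_1} w_r(\vec{\pi})\,q^{|\pi_1|}$ for the $\pi_1$-component of $\vec{\pi}\in S_1$ with Durfee square of $\pi_2$ of size $j$.

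Next I would identify
\begin{equation*}
\frac{q^{j^2}}{(zq)_j (z^{-1}q)_j} \; = \; \sum_{\pi_2} z^{\mathrm{rank}(\pi_2)}\, q^{|\pi_2|},
\end{equation*}
with $\pi_2$ running over unrestricted partitions whose Durfee square has size exactly $j$. This is the classical Durfee square decomposition: the $j\times j$ square contributes $q^{j^2}$; the subpartition $\mu$ to the right has at most $j$ parts, with largest part equal to $l(\pi_2)-j$, and by conjugation (viewing $\mu^T$ as a partition with parts of size at most $j$, whose number of parts is $l(\pi_2)-j$) its generating function weighted by $z^{l(\pi_2)-j}$ equals $\prod_{k=1}^{j}\frac{1}{1-zq^k}=\frac{1}{(zq)_j}$; the subpartition below has all parts of size at most $j$ and a total of $\#(\pi_2)-j$ parts, contributing $\frac{1}{(z^{-1}q)_j}$ weighted by $z^{-(\#(\pi_2)-j)}$. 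The $z$-exponents then combine to $l(\pi_2)-\#(\pi_2)=\mathrm{rank}(\pi_2)$.

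Finally, since $\vec{\pi}=(\pi_1,\pi_2)$ satisfies $|\vec{\pi}|=|\pi_1|+|\pi_2|$, $w_r(\vec{\pi})=(-1)^{\#(\pi_1)}$, and $\mathrm{rank}(\vec{\pi})=\mathrm{rank}(\pi_2)$, multiplying the two generating functions yields
\begin{equation*}
\left[\begin{matrix} N\\j\end{matrix}\right]\frac{q^{j^2}(q)_j}{(zq)_j(z^{-1}q)_j} \; = \; \sum_{n,m} N_{S_1}\!\left(m,n;\boxed{j}\right) z^m q^n,
\end{equation*}
and summing over $1\le j\le N$, together with \eqref{ns1}, gives \eqref{fin_rank_gen}. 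The only delicate point is the second step, where one must carefully track which half of $\mathrm{rank}(\pi_2)=l(\pi_2)-\#(\pi_2)$ is encoded by $\frac{1}{(zq)_j}$ (via the right of the Durfee square) versus $\frac{1}{(z^{-1}q)_j}$ (via conjugation of the piece below the square); everything else is direct bookkeeping.
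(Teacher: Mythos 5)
Your proposal is correct and follows essentially the same route as the paper: the paper's justification (given in the paragraph preceding the theorem) likewise writes $\left[\begin{matrix} N \\ j \end{matrix}\right](q)_j=(q^{N-j+1})_j$ to generate the sign-weighted $\pi_1$ with parts in $[N-j+1,N]$, and reads $\frac{q^{j^2}}{(zq)_j(z^{-1}q)_j}$ as the Durfee-square generating function for $\pi_2$ with $z$ tracking the rank. You merely spell out the conjugation bookkeeping that the paper leaves as ``easy to see,'' and that bookkeeping is carried out correctly.
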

We call the left-hand side of \eqref{fin_rank_gen} the finite analogue of the rank generating function, for letting $N \rightarrow \infty$ on  both sides of \eqref{fin_rank_gen}, gives the well-known result for the rank generating function, namely, if $N(m,n)$ denote the number of partitions of $n$ with rank $m$, then \cite[p.~66]{garvan88}
\begin{equation*}
\sum_{n=1}^{\infty}  \sum_{m=-\infty}^{\infty} N\left(m, n\right) z^m q^n =  \sum_{j=1}^\infty  \frac{q^{j^2}  }{(z q)_j (z^{-1} q)_j}.
\end{equation*}
We also observe that when $n\leq N$, $N_{S_1}(m, n)=N(m, n)$. Indeed, $0\leq|\pi_1|\leq n-j^2$ since $\pi_2$ contains at least a Durfee square of size $j$. However, $n-j^2\leq N-j<N-j+1$ implies that $\pi_1$ must be an empty partition and hence if $Q(m, n, j)$ denote the number of partitions of $n$ with rank $m$ and size of their Durfee squares $j$, then
\begin{equation*}
N_{S_1}(m, n)=\sum_{j=1}^N N_{S_1}\left(m, n; \boxed{j}\right)=\sum_{j=1}^{\left\lfloor\sqrt{n}\right\rfloor}Q(m, n, j)=N(m, n).
\end{equation*}
Moreover, note that using \cite[Equation (16)]{bressoud} (see also \cite[Section 265, p.~26]{macmahon}), when $z=1$, the left-hand side of \eqref{fin_rank_gen} reduces to $1/(q)_N$, the generating function of $p(n, N)$, which is analogous to the rank-generating function reducing to $1/(q)_{\infty}$, the generating function of $p(n)$.

While the rank of a partition could explain two of Ramanujan's three partition congruences, namely, $p(5n+4)\equiv 0\hspace{1mm}(\textup{mod}\hspace{1mm} 5)$ and $p(7n+5)\equiv 0\hspace{1mm}(\textup{mod}\hspace{1mm} 7)$, it could not explain his third congruence $p(11n+6)\equiv 0\hspace{1mm}(\textup{mod}\hspace{1mm} 11)$. Hence Dyson \cite{dys} hypothesized the existence of another partition statistic that he called `crank' that would do the job. Garvan \cite{garvan88} found the crank for vector partitions, and finally it was Andrews and Garvan \cite{andrewsgarvan88} together who found the crank for an ordinary partition. They proved that
\begin{equation*}
\frac{(q)_{\infty}}{(zq)_\infty(z^{-1}q)_\infty} = \sum_{n=0}^{\infty} \sum_{m=-\infty}^{\infty} M(m, n) z^m q^n,
\end{equation*}
where $M(m,n)$ is the number of partitions of $n$ with crank $m$. In what follows, we obtain a finite analogue of the crank for vector partitions. 

Let $V_{2}=\mathcal{D}\times\mathcal{P}\times\mathcal{P}$ denote the set of vector partitions, where $\mathcal{D}$ denotes the set of partitions into distinct parts and $\mathcal{P}$ denotes the set of partitions. Denote an element $\vec{\pi}$ of $V_2$ by $(\pi_1, \pi_2, \pi_3)$ such that $|\vec{\pi}|=|\pi_1|+|\pi_2|+|\pi_3|$. For any positive integer $N$, define $S_2$ by
\begin{equation*}
S_2:=\{\vec{\pi}\in V_2 : l(\pi_1), l(\pi_2), l(\pi_3) \leq N \}. 
\end{equation*}
Define $w_c(\vec{\pi})=(-1)^{\#(\pi_1)}$ to be the weight of the vector partition $\vec{\pi}$ and crank$(\vec{\pi})= \#(\pi_2)-\#(\pi_3)$. Define
\begin{equation}\label{ms1}
M_{S_2}(m, n) := \sum_{\vec{\pi} \in S_2, |\vec{\pi}|=n \atop \mathrm{crank}(\vec{\pi})=m} w_c(\vec{\pi}),   
\end{equation}
that is, $M_{S_2}(m, n)$ denotes the number of vector partitions of $n$ with crank $m$ counted with weight $w_c(\vec{\pi})$. This implies $M_{S_2}(m, n)=M_{S_2}(-m, n)$. In view of the above, we have the following generating function for $M_{S_2}(m, n)$.
\begin{theorem}\label{fcgfthm} Let $N\in\mathbb{N}$. Then
\begin{equation}\label{fcgfeqn}
\frac{(q)_N}{(z q)_N(z^{-1}q)_N} = \sum_{n=0}^\infty\sum_{m=-\infty}^{\infty} M_{S_2}(m, n) z^m q^n. 
\end{equation}
\end{theorem}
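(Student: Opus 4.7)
The plan is to prove Theorem \ref{fcgfthm} by factoring the left-hand side as a product of three generating functions, one for each coordinate of a vector partition in $S_2$, and then matching the exponents of $z$ and $q$ with crank and weight statistics.

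First I would write
\begin{equation*}
\frac{(q)_N}{(zq)_N(z^{-1}q)_N} \;=\; (q)_N \cdot \frac{1}{(zq)_N} \cdot \frac{1}{(z^{-1}q)_N}
\end{equation*}
and interpret each factor combinatorially. For the first factor, the finite $q$-binomial expansion $(q)_N = \prod_{k=1}^{N}(1-q^k)$ gives, upon expansion,
\begin{equation*}
(q)_N \;=\; \sum_{\pi_1 \in \mathcal{D},\, l(\pi_1) \leq N} (-1)^{\#(\pi_1)} q^{|\pi_1|},
\end{equation*}
since choosing the $-q^k$ term corresponds to including $k$ as a distinct part of $\pi_1$. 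For the second factor, the standard geometric series identity
\begin{equation*}
\frac{1}{(zq)_N} \;=\; \prod_{k=1}^{N}\sum_{j_k \geq 0} (zq^k)^{j_k} \;=\; \sum_{\pi_2 \in \mathcal{P},\, l(\pi_2)\leq N} z^{\#(\pi_2)} q^{|\pi_2|},
\end{equation*}
gives the generating function for unrestricted partitions $\pi_2$ with parts at most $N$, where the exponent of $z$ tracks $\#(\pi_2)$. Replacing $z$ by $z^{-1}$ yields the analogous expansion for the third factor, tracking $\#(\pi_3)$ via $z^{-\#(\pi_3)}$.

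Next I would multiply the three expansions and group terms. The product runs over triples $\vec{\pi}=(\pi_1,\pi_2,\pi_3)$ exactly satisfying the defining condition $l(\pi_1),l(\pi_2),l(\pi_3)\leq N$ of $S_2$, so we obtain
\begin{equation*}
\frac{(q)_N}{(zq)_N(z^{-1}q)_N} \;=\; \sum_{\vec{\pi}\in S_2} (-1)^{\#(\pi_1)} \, z^{\#(\pi_2)-\#(\pi_3)} \, q^{|\vec{\pi}|}.
\end{equation*}
Recognizing $(-1)^{\#(\pi_1)}=w_c(\vec{\pi})$ and $\#(\pi_2)-\#(\pi_3)=\operatorname{crank}(\vec{\pi})$, and collecting terms according to $m=\operatorname{crank}(\vec{\pi})$ and $n=|\vec{\pi}|$, gives precisely $\sum_{n,m} M_{S_2}(m,n) z^m q^n$ by the definition \eqref{ms1}.

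There is no real obstacle here; the statement is essentially a bookkeeping identity and the proof amounts to correctly identifying the combinatorial meaning of each factor. The only minor subtlety worth being explicit about is the convergence and absolute rearrangement of the triple product: since everything is a formal power series in $q$ with coefficients that are Laurent polynomials in $z$, the rearrangement is justified termwise once one fixes the total $q$-degree, so no analytic issues arise.
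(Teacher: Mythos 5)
Your proposal is correct and is essentially the argument the paper intends: the paper states Theorem \ref{fcgfthm} as an immediate consequence of the standard combinatorial interpretations of the three factors $(q)_N$, $1/(zq)_N$ and $1/(z^{-1}q)_N$ (mirroring the brief justification it gives for the rank analogue in Theorem \ref{frgfthm}), which is exactly the factorization and bookkeeping you carry out. Your write-up merely makes explicit what the paper leaves implicit, so there is nothing to add.
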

The expressions on the left-hand sides of \eqref{fin_rank_gen} and \eqref{fcgfeqn} have appeared in the literature, however, as far as we know, they have not been studied from a combinatorial standpoint. Indeed, from \cite[p.~252, Theorem 2.1]{andpar}, \cite[p.~263]{abramlostI}, 
\begin{align}\label{frgfbil}
\sum_{n=0}^{N}\left[\begin{matrix} N \\ n \end{matrix}\right]\frac{(q)_nq^{n^2}}{(zq)_n(z^{-1}q)_n}=\frac{1}{(q)_N}+(1-z)\sum_{n=1}^{N}\left[\begin{matrix} N \\ n \end{matrix}\right]\frac{(-1)^n(q)_nq^{n(3n+1)/2}}{(q)_{n+N}}\left(\frac{1}{1-zq^n}-\frac{1}{z-q^n}\right),
\end{align}
and Andrews \cite[p.~258, Theorem 4.1]{andpar} showed that
\begin{align}\label{fcgfbil}
\frac{(q)_{N}}{(zq)_{N}(z^{-1}q)_{N}}=\frac{1}{(q)_N}+(1-z)\sum_{n=1}^{N}\left[\begin{matrix} N \\ n \end{matrix}\right]\frac{(-1)^n(q)_nq^{n(n+1)/2}}{(q)_{n+N}}\left(\frac{1}{1-zq^n}-\frac{1}{z-q^n}\right).
\end{align}
Note that the right-hand sides of \eqref{frgfbil} and \eqref{fcgfbil} are respectively the finite analogues of the bilateral series representations for the rank and crank generating functions \cite[Equations (12.2.3), (12.2.9)]{abramlostI}.

For any positive integer $k$, we define the finite analogues of $k^{\textup{th}}$ rank and crank moments respectively by
\begin{align}
N_{k, N}(n) &:= \sum_{m=-\infty}^{\infty} m^k N_{S_1}(m, n),\label{finrankmom}\\
M_{k,N}(n)&:= \sum_{m=-\infty}^{\infty} m^k M_{S_2}(m, n),\label{fincrankmom}
\end{align}
where $N_{S_1}(m, n)$ and $M_{S_2}(m, n)$ are defined in \eqref{ns1} and \eqref{ms1} respectively. It is easy to see that the odd finite rank and crank moments are equal to zero. 

In 2008, Andrews \cite{andrews08} introduced smallest part partition function $\textup{spt}(n)$ to be the total number of appearances of the smallest parts in all partitions of $n$ and proved that
\begin{equation}\label{Andrews_spt0}
\mathrm{spt}(n)= n p(n) - \frac{1}{2} N_{2}(n).
\end{equation}
In view of Dyson's identity \cite[Theorem 5]{dyson89}
\begin{equation}\label{dysid}
n p(n) = \frac{1}{2} M_{2}(n),
\end{equation}
this implies that
\begin{equation}\label{Andrews_spt}
\mathrm{spt}(n)=  \frac{1}{2} \left( M_{2}(n) -  N_{2}(n) \right).
\end{equation}
We note in passing that Dyson's identity \eqref{dysid} was implicitly derived by Andrews \cite[p.~136]{andrews08} in its analytical form.

We are now ready to state the finite analogue of \eqref{Andrews_spt}.
\begin{theorem}\label{fin_spt_identity}
For any natural number $N$, we have
\begin{align}\label{1}
\mathrm{spt}(n, N) = \frac{1}{2}\left(M_{2,N}(n)- N_{2,N}(n)   \right).
\end{align}
In other words,
\begin{align}\label{2}
\mathrm{spt}(n, N) = \sum_{j=0}^{n-1}p(j, N)\sigma(n-j, N)-\frac{1}{2}N_{2,N}(n),
\end{align}
where
\begin{equation}\label{skn}
\sigma(k, N)=\sum_{d|k\atop d\geq k/N}d.
\end{equation}
\end{theorem}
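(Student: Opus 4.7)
The plan is to read Theorem \ref{fin_spt_q_identity} as a generating-function identity in which each piece has a partition-theoretic interpretation, and then match coefficients of $q^n$. The four quantities I aim to recognize are $\sum_n\textup{spt}(n,N)q^n$, $\tfrac12\sum_n N_{2,N}(n)q^n$, $\tfrac12\sum_n M_{2,N}(n)q^n$, and the convolution $\sum_n\bigl(\sum_{j=0}^{n-1}p(j,N)\sigma(n-j,N)\bigr)q^n$.

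I would first identify the right-hand side of Theorem \ref{fin_spt_q_identity} in two parallel ways. Expanding $\frac{q^n(1-q^{Nn})}{1-q^n}=\sum_{k=1}^{N}q^{kn}$ and swapping the order of summation rewrites it as $\frac{1}{(q)_N}\sum_{k=1}^{N}\frac{q^k}{(1-q^k)^2}$; since $\frac{q^k}{(1-q^k)^2}=\sum_{m\ge 1}mq^{km}$, this collapses to $\frac{1}{(q)_N}\sum_n\sigma(n,N)q^n$, which is the desired convolution. On the other hand, differentiating the crank generating function \eqref{fcgfeqn} twice in $z$ at $z=1$, and using the symmetry $M_{S_2}(m,n)=M_{S_2}(-m,n)$ to kill the linear term, produces $\sum_n M_{2,N}(n)q^n$; a short logarithmic-derivative computation evaluates this to $\frac{2}{(q)_N}\sum_{k=1}^{N}\frac{q^k}{(1-q^k)^2}$, so the right-hand side of Theorem \ref{fin_spt_q_identity} also equals $\tfrac12\sum_n M_{2,N}(n)q^n$. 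Applying the same double-differentiation trick termwise to the rank generating function \eqref{fin_rank_gen} then shows that the middle sum on the LHS of Theorem \ref{fin_spt_q_identity} equals $\tfrac12\sum_n N_{2,N}(n)q^n$.

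The main obstacle will be identifying the first sum on the LHS of Theorem \ref{fin_spt_q_identity} with $\sum_n\textup{spt}(n,N)q^n$. Counting directly by the value and multiplicity of the smallest part gives
\begin{equation*}
\sum_n\textup{spt}(n,N)q^n=\sum_{m=1}^{N}\frac{q^m}{(1-q^m)^2(q^{m+1})_{N-m}}=\frac{1}{(q)_N}\sum_{m=1}^{N}\frac{q^m(q)_{m-1}}{1-q^m},
\end{equation*}
so what remains is the finite analogue of Andrews' spt identity
\begin{equation*}
\sum_{m=1}^{N}\frac{q^m(q)_{m-1}}{1-q^m}=\sum_{n=1}^{N}\left[\begin{matrix}N\\n\end{matrix}\right]\frac{(-1)^{n-1}n\,q^{n(n+1)/2}}{1-q^n}.
\end{equation*}
I would prove this by induction on $N$: the case $N=1$ is immediate, and the $q$-Pascal recurrence $\left[\begin{matrix}N+1\\n\end{matrix}\right]=\left[\begin{matrix}N\\n\end{matrix}\right]+q^{N+1-n}\left[\begin{matrix}N\\n-1\end{matrix}\right]$, followed by the shift $n\mapsto m+1$, reduces the inductive step to
\begin{equation*}
\sum_{n=1}^{N+1}(-1)^{n-1}n\left[\begin{matrix}N+1\\n\end{matrix}\right]q^{n(n-1)/2}=(q)_N,
\end{equation*}
which drops out of the $q$-binomial theorem $(x;q)_{N+1}=\sum_{n=0}^{N+1}(-1)^n\left[\begin{matrix}N+1\\n\end{matrix}\right]q^{n(n-1)/2}x^n=\prod_{k=0}^{N}(1-xq^k)$ by differentiating in $x$ and setting $x=1$: every term on the right with $k\ge 1$ still carries the factor $1-x$, so only the $k=0$ contribution survives and equals $-(q)_N$.

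Substituting the four identifications back into Theorem \ref{fin_spt_q_identity} collapses it to
\begin{equation*}
\sum_n\textup{spt}(n,N)q^n+\tfrac12\sum_n N_{2,N}(n)q^n=\tfrac12\sum_n M_{2,N}(n)q^n=\sum_n\Bigl(\sum_{j=0}^{n-1}p(j,N)\sigma(n-j,N)\Bigr)q^n,
\end{equation*}
and reading off the coefficient of $q^n$ yields both \eqref{1} and \eqref{2}.
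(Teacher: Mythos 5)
Your proposal is correct and follows essentially the same route as the paper's first proof: the same four generating-function identifications (spt, second rank moment, second crank moment, divisor convolution), the same induction for $\sum_{m=1}^{N}q^m(q)_{m-1}/(1-q^m)=\sum_{n=1}^{N}\left[\begin{smallmatrix}N\\n\end{smallmatrix}\right](-1)^{n-1}nq^{n(n+1)/2}/(1-q^n)$, and the same key identity $(q)_N=\sum_{n=1}^{N+1}\left[\begin{smallmatrix}N+1\\n\end{smallmatrix}\right](-1)^{n-1}nq^{n(n-1)/2}$ obtained by differentiating the $q$-binomial theorem. The only cosmetic difference is that you drive the induction with the $q$-Pascal recurrence rather than by separating the $(N+1)$-th term, but both reduce to the identical lemma.
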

Letting $N \rightarrow \infty$ in either \eqref{1} or \eqref{2}, we obtain Andrew's spt-identity \eqref{Andrews_spt}. To obtain \eqref{Andrews_spt} from \eqref{2}, one additionally needs to use Euler's recurrence relation $np(n)=\sum_{j=0}^{n-1}p(j)\sigma(n-j)$. We now illustrate \eqref{2}.
\begin{example}
Let $N=3$ and $n=6$. One can check that $\textup{spt}(6,3)=21$ and $ \sum_{j=0}^{5} p(j, 3) \sigma(6-j, 3)=1(11)+1(5)+2(6)+3(4)+4(3)+5(1)+7(0)=57$. Next,
\begin{align*}
N_{2, 3}(n)&=\sum_{m=-5}^{5}m^2\sum_{j=1}^{3} N_{S_1}(m, 6; \boxed{j})\nonumber\\
&=50\sum_{j=1}^{3} N_{S_1}(5, 6; \boxed{j})+32\sum_{j=1}^{3} N_{S_1}(4, 6; \boxed{j})+18\sum_{j=1}^{3} N_{S_1}(3, 6; \boxed{j})\nonumber\\
&\quad+8\sum_{j=1}^{3} N_{S_1}(2, 6; \boxed{j})+2\sum_{j=1}^{3} N_{S_1}(1, 6; \boxed{j})\nonumber\\
&=50(1)+32(0)+18(1)+8(0)+2(2)\nonumber\\
&=72.
\end{align*}
Thus,
\begin{equation*}
\sum_{j=0}^{5} p(j, 3) \sigma(6-j, 3)-\frac{1}{2}N_{2, 3}(n)=57-36=21=\textup{spt}(6, 3),
\end{equation*}
as stated by \eqref{2}.
\end{example}
An immediate application of Theorem \ref{fin_spt_identity} is an inequality between the finite analogues of rank and crank moments.
\begin{corollary}\label{inq_fin_crank_rank}
Let $N\in\mathbb{N}$ be fixed. Then for all $n\geq 1$,
\begin{equation*}
M_{2,N}(n) > N_{2,N}(n).  
\end{equation*}
\end{corollary}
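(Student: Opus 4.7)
The plan is to read the corollary off directly from Theorem \ref{fin_spt_identity}. That theorem gives the identity
\begin{equation*}
\mathrm{spt}(n, N) \;=\; \frac{1}{2}\bigl(M_{2,N}(n)- N_{2,N}(n)\bigr),
\end{equation*}
so establishing $M_{2,N}(n) > N_{2,N}(n)$ reduces to showing that $\mathrm{spt}(n, N) > 0$ for every $n \geq 1$ and every $N \in \mathbb{N}$.

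For the positivity of $\mathrm{spt}(n, N)$, I would argue combinatorially from Definition 1. For any $n \geq 1$ and any $N \geq 1$, the partition $n = \underbrace{1 + 1 + \cdots + 1}_{n\text{ times}}$ belongs to $\mathcal{P}(n, N)$ since its largest part equals $1 \leq N$. In this partition the smallest part is $1$, and it appears $n$ times, so it contributes $n$ to the count $\mathrm{spt}(n, N)$. Hence $\mathrm{spt}(n, N) \geq n \geq 1 > 0$.

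Combining the two observations, $M_{2,N}(n) - N_{2,N}(n) = 2\,\mathrm{spt}(n, N) \geq 2 > 0$, which is the claimed strict inequality. There is no real obstacle here: the entire content of the corollary is already packaged in Theorem \ref{fin_spt_identity}, and the only point to check is that the spt-function has no "accidental zeros" at the finite level — an elementary check because the all-ones partition is always available. If one wanted a sharper lower bound, the same argument shows $M_{2,N}(n) - N_{2,N}(n) \geq 2n$, but the stated strict inequality is all the corollary requires.
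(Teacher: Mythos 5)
Your proof is correct and is essentially the argument the paper intends: the corollary is stated as an immediate consequence of Theorem \ref{fin_spt_identity}, and the only content is the strict positivity of $\mathrm{spt}(n,N)$, which your observation about the all-ones partition (giving $\mathrm{spt}(n,N)\geq n$) settles.
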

The asymptotic estimate of $\textup{spt}(n, N)$ is now given. 
\begin{theorem}\label{asyest}
For any positive integer $N$, as $n \rightarrow \infty$,
\begin{align*}
\textup{spt}(n, N) = \frac{n^N}{(N!)^2} + O(n^{N-1}). 
\end{align*}
\end{theorem}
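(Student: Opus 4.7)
The plan is to extract the asymptotic of $\textup{spt}(n, N)$ from the dominant singularity of its generating function at $q = 1$. First I would establish, by classifying the partitions counted by $\textup{spt}(n, N)$ according to the value $k$ of their smallest part (so that the remaining parts lie in $\{k+1, \ldots, N\}$) and weighting by the multiplicity $m$ of that smallest part, the identity
\begin{equation*}
\sum_{n=1}^{\infty} \textup{spt}(n, N)\, q^{n} \; = \; \sum_{k=1}^{N} \frac{q^{k}}{(1-q^{k})^{2}\,(q^{k+1})_{N-k}},
\end{equation*}
where $q^{k}/(1-q^{k})^{2} = \sum_{m \geq 1} m\,q^{mk}$ records the smallest part counted with multiplicity and $1/(q^{k+1})_{N-k}$ generates the remaining parts.

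The $k$-th summand is a rational function whose pole at $q = 1$ has order $2 + (N - k)$, so the maximal order $N+1$ is attained only when $k = 1$. This summand simplifies as
\begin{equation*}
\frac{q}{(1-q)^{2}\,(q^{2})_{N-1}} \; = \; \frac{q}{(1-q)\,(q)_{N}},
\end{equation*}
and writing $(q)_{N} = (1-q)^{N}\prod_{j=1}^{N}(1 + q + \cdots + q^{j-1})$, where the product specialises to $N!$ at $q = 1$, shows that its principal part at $q = 1$ is $(N!)^{-1}(1-q)^{-(N+1)}$. Extracting the coefficient of $q^{n}$ via
\begin{equation*}
\frac{1}{N!}\binom{n+N}{N} \; = \; \frac{n^{N}}{(N!)^{2}} + O(n^{N-1})
\end{equation*}
yields the claimed main term.

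It remains to bound the subdominant contributions. For $2 \leq k \leq N$ the pole at $q = 1$ of the $k$-th summand has order $N - k + 2 \leq N$; at any primitive $d$-th root of unity $\alpha$ with $d \geq 2$, direct inspection of $(1 - q^{k})^{2}(q^{k+1})_{N-k}$ gives pole order $2\,\mathbf{1}[d \mid k] + \lfloor N/d \rfloor - \lfloor k/d \rfloor$, which a short case check confirms is at most $N$. Combined with the standard estimate $[q^{n}](1 - q/\alpha)^{-j} = O(n^{j-1})$ for $|\alpha| = 1$, every subdominant contribution is $O(n^{N-1})$, and summing over all $k$ completes the argument. The main obstacle is precisely this bookkeeping of subdominant poles uniformly over the $N$ summands; it is routine but slightly tedious, and can alternatively be dispatched in one stroke by the standard singularity analysis for rational generating functions.
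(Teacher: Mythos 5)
Your argument is correct and is essentially the paper's own proof: both start from the generating function $\sum_{n\ge 1}\textup{spt}(n,N)q^n=\sum_{k=1}^{N}q^k/\bigl((1-q^k)^2(q^{k+1})_{N-k}\bigr)$, identify $q=1$ as the unique pole of maximal order $N+1$ with leading coefficient $1/N!$ (the paper computes this as $A=\lim_{q\to1}(1-q)^{N+1}F(q,N)$ in a partial fraction decomposition, you by isolating the $k=1$ summand $q/((1-q)(q)_N)$), and extract the main term $\frac{1}{N!}\binom{n+N}{N}=\frac{n^N}{(N!)^2}+O(n^{N-1})$ while poles at the other roots of unity, being of order at most $N$, contribute only $O(n^{N-1})$. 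Your bookkeeping of the subdominant pole orders is slightly more explicit than the paper's, which simply records that the pole at a primitive $i$-th root of unity has order $\lfloor N/i\rfloor+1$ and absorbs the rest into the error term, but the approach is the same.
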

As will be shown in this paper, one could also follow Andrews' approach in \cite{andrews08} to derive Theorem \ref{fin_spt_identity}, however, deriving it from Theorem \ref{fingenc} gives a uniform approach in obtaining such identities. Indeed, Theorem \ref{fin_spt_q_identity} is but one special case, namely when $c\to 1$, of Theorem \ref{fingenc}. Its other special case $c=-1$ is discussed in Section \ref{nscsection}. It is concerned with a finite analogue of $N_{\textup{SC}}(n)$, the number of self-conjugate $S$-partitions counted with a certain specific weight \cite[Equation (1.10)]{agl13}. Similarly the case $c=0$, discussed in Section \ref{furcor}, gives a relation between $d(n, N)$ and $\textup{lpt}(n, N)$, the number of occurrences of the largest parts in those partitions $\pi$ of $n$ in which the largest parts are $\leq N$.

In order to derive our results involving the finite analogue of $N_{\textup{SC}}(n)$, it became imperative to generalize a result of George Beck \cite[A034296]{beck} recently proved by Chern \cite[Theorem 1.2]{chern}. Let $\textup{ssptd}_{o}(n)$ denote the sum of the smallest parts in all partitions of n into distinct parts which are odd in number and let $a(n)$ denote the number of compact partitions of $n$ \cite{andrewsba}, that is, the partitions in which every number between their largest and smallest parts also appears as a part, or by conjugation, the number of partitions of $n$ in which only the largest part can repeat. Then the Beck-Chern result is that $a(n)=\textup{ssptd}_{o}(n)$. Our finite analogue of this theorem is now given.
\begin{theorem}\label{chernbeck}
Let $N\in\mathbb{N}$. Let $a(n, N)$ denote the number of partitions of $n$ with $l(\pi) \leq N$ in which only the largest part may repeat. Let $\textup{ssptd}_{o}(n, N)$ denote the sum of smallest parts in all partitions $\pi$ of $n$ into odd number of distinct parts, and satisfying $l(\pi) - s(\pi)\leq N-1$. Then
\begin{equation}\label{bcgen}
a(n, N) = \textup{ssptd}_{o}(n, N) - \textup{ssptd}_{o}(n-N, N).
\end{equation}
\end{theorem}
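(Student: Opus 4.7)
The plan is to prove \eqref{bcgen} by comparing generating functions. Setting
\begin{equation*}
A_N(q):=\sum_{n\geq 1}a(n,N)q^n,\qquad B_N(q):=\sum_{n\geq 1}\textup{ssptd}_{o}(n,N)q^n,
\end{equation*}
the claim is equivalent to $A_N(q)=(1-q^N)B_N(q)$. To obtain a closed form for $A_N(q)$, I classify each partition counted by $a(n,N)$ by the value $k\in\{1,\ldots,N\}$ of its (possibly repeated) largest part: that part contributes $q^k/(1-q^k)$, while the remaining, necessarily distinct, parts drawn from $\{1,\ldots,k-1\}$ contribute $(-q;q)_{k-1}$, so
\begin{equation*}
A_N(q)=\sum_{k=1}^{N}\frac{q^{k}(-q;q)_{k-1}}{1-q^{k}}.
\end{equation*}
For $B_N(q)$, I classify the partitions in $\mathcal{D}(n,N)$ by their smallest part $k$, with the other parts forming a distinct subset of $\{k+1,\ldots,k+N-1\}$; tracking $\#(\pi)-1$ by an auxiliary variable $z$ and averaging the $z=\pm 1$ specializations to isolate the odd-$\#(\pi)$ contribution yields
\begin{equation*}
B_N(q)=\tfrac{1}{2}\sum_{k=1}^{\infty}k\,q^{k}\bigl[(-q^{k+1};q)_{N-1}+(q^{k+1};q)_{N-1}\bigr].
\end{equation*}

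Next, Guo and Zeng's identity \eqref{uchigen}, equivalent to $(1-q^N)\sum_{k\geq 1}k\,q^{k}(q^{k+1};q)_{N-1}=\sum_{n=1}^{N}q^n/(1-q^n)$, disposes of the second summand in $2(1-q^N)B_N(q)$. To handle the first summand, I would apply Rothe's $q$-binomial formula $(-q^{k+1};q)_{N-1}=\sum_{m=0}^{N-1}\left[\begin{matrix}N-1\\ m\end{matrix}\right]q^{m(m+1)/2+km}$, evaluate the inner geometric series $\sum_{k\geq 1}k\,q^{k(m+1)}=q^{m+1}/(1-q^{m+1})^2$, and use the $q$-binomial identity $(1-q^N)\left[\begin{matrix}N-1\\ m\end{matrix}\right]/(1-q^{m+1})=\left[\begin{matrix}N\\ m+1\end{matrix}\right]$ to obtain
\begin{equation*}
(1-q^N)\sum_{k\geq 1}k\,q^{k}(-q^{k+1};q)_{N-1}=\sum_{n=1}^{N}\left[\begin{matrix}N\\ n\end{matrix}\right]\frac{q^{n(n+1)/2}}{1-q^{n}}.
\end{equation*}
Combining the two evaluations with van Hamme's identity \eqref{hammeid} to eliminate $\sum_{n=1}^{N}q^n/(1-q^n)$, the claim $A_N(q)=(1-q^N)B_N(q)$ collapses to the finite $q$-identity
\begin{equation*}
\sum_{k=1}^{N}\frac{q^{k}(-q;q)_{k-1}}{1-q^{k}}=\sum_{\substack{n=1\\ n\text{ odd}}}^{N}\left[\begin{matrix}N\\ n\end{matrix}\right]\frac{q^{n(n+1)/2}}{1-q^{n}}.
\end{equation*}

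Proving this last identity is the main obstacle. My preferred approach is to first recast its right-hand side via $1/(1-q^n)=\sum_{r\geq 0}q^{nr}$ and Rothe's formula into the compact form $\tfrac{1}{2}\sum_{r\geq 0}\bigl[(-q^{r+1};q)_N-(q^{r+1};q)_N\bigr]$, so that the task becomes
\begin{equation*}
\sum_{k=1}^{N}\frac{q^{k}(-q;q)_{k-1}}{1-q^{k}}=\tfrac{1}{2}\sum_{r=0}^{\infty}\bigl[(-q^{r+1};q)_N-(q^{r+1};q)_N\bigr].
\end{equation*}
I would then attempt induction on $N$, using the recurrences $(\pm q^{r+1};q)_{N+1}=(\pm q^{r+1};q)_N(1\pm q^{r+N+1})$ and verifying that the difference between the right-hand sides at levels $N+1$ and $N$ matches the isolated $k=N+1$ summand $q^{N+1}(-q;q)_N/(1-q^{N+1})$ on the left. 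Should induction prove awkward, a fallback combinatorial route is available: the left side counts triples $(S,k,m)$ with $S\subseteq\{1,\ldots,k-1\}$, $1\leq k\leq N$, $m\geq 1$, weighted by $\sum S+km$, while the right side counts pairs $(r,\pi)$ with $r\geq 0$ and $\pi$ a distinct-parts partition of the same total into an odd number of parts drawn from $\{r+1,\ldots,r+N\}$; an explicit weight-preserving bijection between these two families, built by toggling membership of the largest element against the multiplicity parameter $m$, would complete the argument.
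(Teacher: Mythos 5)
Your reductions are all correct, and your route is genuinely different from the paper's in its middle portion. The paper gives two proofs: the first passes through Heine's transformation, a closed-form evaluation of $F(-1/q,-q^N;q)$ and an auxiliary summation lemma; the second is a direct induction on $N$ applied to $U(q,N)=V(q,N)$ with $U=A_N$ and $V=(1-q^N)B_N$, using summation by parts. You instead transform $(1-q^N)B_N(q)$ explicitly: Guo--Zeng handles the $(q^{k+1};q)_{N-1}$ piece, Rothe's formula plus $\left[\begin{smallmatrix}N\\ n\end{smallmatrix}\right]=\frac{1-q^N}{1-q^n}\left[\begin{smallmatrix}N-1\\ n-1\end{smallmatrix}\right]$ handles the $(-q^{k+1};q)_{N-1}$ piece, and van Hamme's identity merges the two into the clean statement
\begin{equation*}
\sum_{k=1}^{N}\frac{q^{k}(-q;q)_{k-1}}{1-q^{k}}=\sum_{\substack{n=1\\ n\ \mathrm{odd}}}^{N}\left[\begin{matrix}N\\ n\end{matrix}\right]\frac{q^{n(n+1)/2}}{1-q^{n}},
\end{equation*}
which does not appear in the paper and is a nice by-product (it is, in effect, half the sum of \eqref{hammeid} and \eqref{simvan}, and sits close to the circle of ideas in Lemma \ref{GFNSCdnN}). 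Your proposed induction on the recast form $\tfrac12\sum_{r\ge 0}[(-q^{r+1};q)_N-(q^{r+1};q)_N]$ does go through: the left side increments by $q^{N+1}(-q;q)_N/(1-q^{N+1})$ and the right side by $\tfrac12 q^{N+1}\sum_{r\ge 0}q^{r}\{(-q^{r+1};q)_N+(q^{r+1};q)_N\}$, so the step you defer as a ``verification'' is exactly the identity \eqref{finalequality} on which the paper's second proof also hinges. Be aware that this is the one genuinely nontrivial point left in your plan: it is not a routine check but requires the small telescoping trick the paper uses (splitting $q^{r}$ as $\tfrac12\{(1+q^{r})-(1-q^{r})\}$ and likewise for $q^{r+N+1}$, so that the four resulting series telescope to $(-q;q)_N$). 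Since that identity is true and elementarily provable, your outline is sound; what your detour buys is the explicit odd-index $q$-binomial identity above, at the cost of invoking \eqref{uchigen} and \eqref{hammeid} as external inputs that the paper's self-contained induction avoids.
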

The form of the above finite analogue is reminiscent of \eqref{guozeng}. Of course, the Beck-Chern result is immediately recovered upon letting $N\to\infty$ in \eqref{bcgen}, or when $n\leq N$.
\begin{example}
Here is an example illustrating Theorem \ref{chernbeck}. Let $N=3$ and $n=8$. The relevant partitions enumerated by $a(8,3)$ are $2+2+2+2,~3+3+2,~1+1+1+1+1+1+1+1$. Thus $a(8,3)=3$. Now the only partition of $8$ which qualifies while calculating $\textup{ssptd}_{o}(8, 3)$ is $8$. Hence $\textup{ssptd}_{o}(8, 3)=8$. Similarly, there is only one partition of $n-N=5$ which is to be considered while calculating $\textup{ssptd}_{o}(5, 3)$, and that is $5$ itself. Hence $\textup{ssptd}_{o}(5, 3)=5$. Thus, $\textup{ssptd}_{o}(8, 3)-\textup{ssptd}_{o}(5, 3)=8-5=3=a(8, 3)$, as guaranteed by Theorem \textup{\ref{chernbeck}}.
\end{example}

We also obtain other new results in addition to the ones stated above. These are given in the lemmas before proving some of the stated results. These include the partial fraction expansion of a finite analogue of Fine's function defined in \eqref{finefunction}, a finite analogue of the Rogers-Fine identity.

This paper is organized as follows. The preliminary results are collected in Section \ref{prelim}. The proofs of Theorem \ref{finmainabc}, its corollaries as well as a proof of Theorem \ref{fingenc} is given in Section \ref{prooffinmainabc}. The theory of $\textup{spt}(n, N)$ is developed in Section \ref{sptnNfunction}. In Section \ref{nscsection}, we give two proofs of our finite analogue of the Beck-Chern theorem and also develop the theory of $N_\textup{SC}(n, N)$. One of the proofs also gives an interesting result along the way (see \eqref{atw} below). A further corollary of Theorem \ref{fingenc}, which gives a nice relation between $d(n, N)$ and $\textup{lpt}(n, N)$, is stated and proved in Section \ref{furcor}. Section \ref{fingi} is devoted to proving Theorem \ref{fingithm} and its corollaries. In the same section, we also initiate the theory of $F_{N}(\a, \b;\tau)$, a finite analogue of Fine's function, by obtaining its partial fraction decomposition and a finite analogue of Rogers-Fine identity. We end the paper with some concluding remarks in Section \ref{cr}.

\section{Preliminaries}\label{prelim}
The $q$-binomial theorem is given by \cite[p.~17, Equation (2.2.1)]{gea1998}
\begin{equation}\label{q-binomial thm}
\sum_{n=0}^{\infty}\frac{(a)_{n}z^n}{(q)_n}=\frac{(az)_{\infty}}{(z)_{\infty}}\hspace{5mm}(|z|<1).
\end{equation}
From \cite[p.~36, Equations (3.3.6), (3.3.7)]{gea1998}, we have
\begin{align}
 (z)_N &= \sum_{n=0}^{N} \left[\begin{matrix} N \\ n \end{matrix}\right](-1)^{n}z^n q^{n(n-1)/2},\label{binomial0}\\
\frac{1}{(z)_{N}}&=\sum_{j=0}^{\infty}\left[\begin{matrix} N+j-1\\ j\end{matrix} \right]z^j.\nonumber
\end{align}
The $q$-Chu-Vandermonde identity is given by \cite[p. 354, II(6)]{gasperrahman}
\begin{equation}\label{Chu-Vandermonde}
{}_{2}\phi_{1}\left[ \begin{matrix} a,  q^{-M} \\
 d \end{matrix} \, ; q, q  \right] 
= \frac{(d/a)_M a^M}{(d)_M},
\end{equation}
where the basic hypergeometric series ${}_{r+1}\phi_{r}$ is defined as
\begin{equation*}\label{bhs}
{}_{r+1}\phi_{r}\left[\begin{matrix} a_1, a_2, \ldots, a_{r+1}\\
  b_1,  b_2, \ldots, b_{r} \end{matrix}\,; q,
z \right] :=\sum_{n=0}^{\infty} \frac{(a_1;q)_n (a_2;q)_n \cdots (a_{r+1};q)_n}{(q;q)_n (b_1;q)_n \cdots (b_{r};q)_n} z^n.
\end{equation*}
We also need \cite[Corollary 1.2]{overp}
\begin{equation}\label{qchuvan}
\sum_{n=0}^{N}\left[\begin{matrix} N \\ n \end{matrix}\right]\frac{(-1/a)_n(ac)^nq^{n(n+1)/2}}{(cq)_n}=\frac{(-acq)_{N}}{(cq)_N}.
\end{equation}
The partial fraction decomposition of $F(a,b;t)$, defined in \eqref{finefunction}, is given by \cite[p. 18, Equation (16.3)]{fine}
\begin{equation}\label{fine16.3}
F(a,b;t)=\frac{(aq)_{\infty}}{(bq)_{\infty}}\sum_{n=0}^{\infty}\frac{(b/a)_n}{(q)_n}\frac{(aq)^n}{1-tq^n}.	
\end{equation}
We also note \cite[p.~5, Equation (6.3)]{fine}
\begin{equation}\label{finetransform}
F(a, b;t)=\frac{1-b}{1-t}F\left(\frac{at}{b},t;b\right).
\end{equation}
The Heine transformation \cite[p.~19, Corollary 2.3]{gea1998} is given by
\begin{align}\label{heine}
{}_{2}\phi_{1}\left[ \begin{matrix} a,  b \\
 c \end{matrix} \, ; q, z  \right] = \frac{(b, a z; q)_{\infty }}{(c, z; q)_{\infty}} {}_{2}\phi_{1}\left[ \begin{matrix} \frac{c}{b} ,  z \\
 a z \end{matrix} \, ; q, b  \right],
\end{align}
whereas the finite Heine transformation, due to Andrews \cite[Theorem 2]{andfinheine}, is
\begin{equation}\label{fht}
_3 \phi _2
\left[\begin{matrix}
q^{-N}, &\alpha, &\beta \\
\gamma, & \frac{q^{1-N}}{\tau}
\end{matrix}; q, q\right]= \frac{(\beta, \alpha\tau;q)_N}{(\gamma,\tau;q)_N} {}_3 \phi _2
\left[\begin{matrix}
q^{-N}, & \frac{\gamma}{\beta}, &\tau\\
\alpha\tau, & \frac{q^{1-N}}{\beta}
\end{matrix}; q, q\right].
\end{equation}
We also need a corollary of \eqref{fht} given below \cite[Corollary 3, Equation 2.7]{andfinheine}.
\begin{equation}\label{corfht}
_3 \phi _2
\left[\begin{matrix}
q^{-N}, &\alpha, &\beta \\
\gamma, & \frac{q^{1-N}}{\tau}
\end{matrix}; q, q\right]= \frac{(\frac{\gamma}{\beta}, \beta\tau;q)_N}{(\gamma,\tau;q)_N}  {}_3 \phi _2
\left[\begin{matrix}
q^{-N}, & \frac{\alpha\beta\tau}{\gamma}, &\beta\\
\beta\tau, & \frac{\beta q^{1-N}}{\gamma}
\end{matrix}; q, q\right]
\end{equation}
A special case of \eqref{fht}, given by Rowell and Yee \cite[Lemma 3]{rowellyee09}, is
\begin{align}\label{rowyee}
\sum_{n=0}^{N} \left[\begin{matrix} N  \\ n \end{matrix}\right]  \frac{(-\alpha)_n (\tau q )^n}{(\tau q^{N+1-n})_n} = \frac{(-\alpha \tau q)_N}{(\tau q)_N}. 
\end{align} 
Watson's $q$-analogue of Whipple's theorem \cite[p.~360, III.18]{gasperrahman} is given by
\begin{multline}\label{watson87}
_8\phi_7\left[\begin{matrix} a,& q\sqrt{a},& -q\sqrt{a},& b, &
c, &d, & e, & q^{-N}\\
 &\sqrt{a}, & -\sqrt{a},& \df{a
q}{b}, & \df{a q}{c}, & \df{a q}{d}, & \df{a q}{e}, & a
q^{N+1}\end{matrix}\,; q,
 \df{a^2q^{N+2}}{bcde}\right] \\
=\df{(a q)_N\left(\df{a q}{de}\right)_N}
{\left(\df{a q}{d}\right)_N\left(\df{a q}{e}\right)_N}
\ _4\phi_3\left[\begin{matrix}\df{a q}{bc},d, e,q^{-N}\\ \df{a q}{b},
\df{a q}{c}, \df{de q^{-N}}{a}\end{matrix}\,; q, q\right].
\end{multline}
It is to be understood that the derivation of infinite series identities obtained by letting $N\to\infty$ in their respective finite analogues employs Tannery's theorem \cite[p.~136]{bromwich-1991a}.
\section{Proofs of Theorem \ref{finmainabc}, its corollaries and of Theorem \ref{fingenc}}\label{prooffinmainabc}
The two general identities, namely, Theorems \ref{finmainabc} and Theorem \ref{fingenc}, will be proved in this Section.

\begin{proof}[Theorem \textup{\ref{finmainabc}}][]
From \cite[p.~70, Equation (3.2.1)]{gasperrahman}, we have \\
\begin{equation*}
_4 \phi _3
\left[\begin{matrix}
q^{-N}, & A, &B, &C\\
D, & E, & \frac{ABCq^{1-N}}{DE}
\end{matrix}; q, q\right]= \frac{(\frac{E}{A})_N(\frac{DE}{BC})_N}{(E)_N(\frac{DE}{ABC})_N} {}_4 \phi _3
\left[\begin{matrix}
q^{-N}, & A, &\frac{D}{B}, &\frac{D}{C}\\
D, & \frac{DE}{BC}, & \frac{Aq^{1-N}}{E}
\end{matrix}; q, q\right].
\end{equation*}
Let $A=q$, $B=\frac{bq}{a},$ $C=cq,$ $D=bq$ and $E=cq^2$ in the above identity so that 
\begin{equation*}
\sum_{n=0}^{N}\frac{(q^{-N})_n(\frac{bq}{a})_n(1-cq)}{(bq)_n(1-cq^{n+1})(\frac{q^{1-N}}{a})_n}q^n=
\frac{(1-cq)(1-aq^N)}{(1-cq^{N+1})}\sum_{n=0}^{N}\frac{(q^{-N})_n(\frac{b}{c})_n}{(bq)_n(1-aq^n)(\frac{q^{-N}}{c})_n}q^n.
\end{equation*}
Employing 
\begin{align}
\left(\frac{q^{-N}}{c}\right)_n&=\frac{(-1)^n(cq^{N-n+1})_n q^{\frac{n(n-1)}{2}}}{c^n q^{Nn}},\label{e1}\\
\frac{\left(q^{-N}\right)_n}{\left({\frac{q^{-N}}{c}}\right)_n}&=\frac{\left(q^{N-n+1}\right)_n c^n}{\left(cq^{N-n+1}\right)_n}=\frac{(q)_N(cq)_{N-n}c^n}{(q)_{N-n}(cq)_N}\label{e2}
%\frac{\left(q^{-N}\right)_n}{\left({\frac{q^{1-N}}{a}}\right)_n}&=\frac{\left(q^{N-n+1}\right)_n \left(\frac{a}{q}\right)^n}{\left(aq^{N-n}\right)_n}=\frac{(q)_N(a)_{N-n}}{(q)_{N-n}(a)_N}\left(\frac{a}{q}\right)^n\nonumber
\end{align}
in the above equation, we see that
\begin{equation*}
\sum_{n=0}^{N}\frac{(q)_N(a)_{N-n}(\frac{bq}{a})_n}{(bq)_n(1-cq^{n+1})(q)_{N-n}(a)_{N+1}}a^n=
\sum_{n=0}^{N}\frac{(q)_N (cq)_{N-n}(\frac{b}{c})_n(cq)^n}{(1-aq^n)(q)_{N-n}(cq)_{N+1}(bq)_n}.
\end{equation*}
Now multiply both sides by $\frac{(a-b)(1-q^{N+1})}{1-b}$, and replace $n$ by $n-1$ in both the resulting sums to deduce
\begin{equation*}
\sum_{n=1}^{N+1}\frac{(q)_{N+1}(a)_{N-n+1}(\frac{b}{a})_{n}a^{n}}{(b)_n(1-cq^{n})(q)_{N-n+1}(a)_{N+1}}=
\sum_{n=1}^{N+1}\frac{(q)_{N+1} (cq)_{N-n+1}(\frac{b}{c})_{n-1}(cq)^{n-1}(a-b)}{(q)_{N-n+1}(cq)_{N+1}(b)_{n-1}(1-aq^{n-1})(1-bq^{n-1})}.
\end{equation*}
On the right-hand side of the above equation write
\begin{equation*}
\frac{(a-b)q^{n-1}}{(1-aq^{n-1})(1-bq^{n-1})}=\frac{aq^{n-1}}{1-aq^{n-1}}-\frac{bq^{n-1}}{1-bq^{n-1}},
\end{equation*}
and replace $N$ by $N-1$ on both sides to finally obtain \eqref{finmainabceqn}. 
\end{proof}
The special cases of Theorem \ref{fingGaravan} are now given. First given below is an identity of Yan and Fu \cite[Equation (4)]{yanfunanjing} whose limiting case $N\rightarrow \infty$ is the aforementioned identity of Yan and Fu \cite[p.~117]{yanfunanjing} rediscovered by Andrews, Garvan and Liang \cite[Theorem 3.5]{agl13}.
\begin{corollary}\label{fin_AGL} 
Let $N\in\mathbb{N}$. For $c\neq q^{-n}, 1\leq n\leq N$,
\begin{align*}
\sum_{n=1}^{N} \left[\begin{matrix} N\\n\end{matrix}\right] \frac{(-1)^{n-1}  q^{\frac{n(n+1)}{2}}}{ (1-cq^n)  } = \frac{1}{1-c}\left( 1- \frac{(q)_N}{(c q)_N} \right).
\end{align*}
\end{corollary}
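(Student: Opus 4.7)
My plan is to establish the identity directly, using the two forms of the $q$-binomial theorem already gathered in Section \ref{prelim}. Both sides of the asserted equation are rational functions of $c$ whose poles lie only at the excluded points $c = q^{-n}$, $1 \le n \le N$, so it suffices to verify the identity for $|c|$ small enough that the geometric series below converges absolutely, and then extend by analytic continuation.

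First, I would expand $\frac{1}{1-cq^n} = \sum_{k=0}^{\infty} c^{k}q^{nk}$, interchange the two sums, and, for each fixed $k$, restore the $n=0$ term in order to recognize the resulting inner sum via \eqref{binomial0} with $z = q^{k+1}$:
\begin{equation*}
\sum_{n=0}^{N}\left[\begin{matrix} N \\ n \end{matrix}\right](-1)^{n} q^{n(n-1)/2}\bigl(q^{k+1}\bigr)^{n} = (q^{k+1})_{N}.
\end{equation*}
This collapses the left-hand side of the corollary to
\begin{equation*}
\sum_{k=0}^{\infty} c^{k}\bigl(1 - (q^{k+1})_{N}\bigr) \;=\; \frac{1}{1-c} \;-\; \sum_{k=0}^{\infty} c^{k}(q^{k+1})_{N}.
\end{equation*}

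For the remaining series I would use $(q^{k+1})_{N} = (q)_{N}(q^{N+1})_{k}/(q)_{k}$ and then invoke the $q$-binomial theorem \eqref{q-binomial thm} with $a = q^{N+1}$ and $z = c$, giving
\begin{equation*}
\sum_{k=0}^{\infty} c^{k}(q^{k+1})_{N} = (q)_{N}\sum_{k=0}^{\infty}\frac{(q^{N+1})_{k}}{(q)_{k}}c^{k} = \frac{(q)_{N}(cq^{N+1})_{\infty}}{(c)_{\infty}} = \frac{(q)_{N}}{(c)_{N+1}}.
\end{equation*}
Factoring $(c)_{N+1} = (1-c)(cq)_{N}$ then assembles the final answer $\frac{1}{1-c}\bigl(1 - (q)_{N}/(cq)_{N}\bigr)$, as required.

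I do not foresee any serious obstacle; the only subtlety is the interchange of summation, which is immediate for $|c|$ small by absolute convergence, with the identity then extending to all admissible $c$ by analytic continuation. One could alternatively specialize $z = 1$ in Theorem \ref{fingGaravan}, whose left-hand side collapses (through $(q)_{n}/(zq)_{n} = 1$) to the left-hand side of the corollary; however, one would then still be left with evaluating $\frac{1}{c(cq)_{N}}\sum_{n=1}^{N}\left[\begin{matrix} N\\n\end{matrix}\right](q/c)_{n-1}(cq)_{N-n}(cq)^{n}$, which requires essentially the same $q$-binomial machinery. The direct route outlined above seems cleaner.
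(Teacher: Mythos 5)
Your proof is correct, but it takes a genuinely different route from the paper's. The paper obtains this corollary as the $z=1$ specialization of Theorem \ref{fingGaravan}: at $z=1$ the left-hand side of \eqref{fingen of Garavan} collapses (via $(q)_n/(zq)_n=1$) to the sum in question, and the resulting right-hand side is rewritten using \eqref{e2} into a terminating ${}_2\phi_1$ that is evaluated in closed form by the $q$-Chu-Vandermonde identity \eqref{Chu-Vandermonde} --- exactly the ``remaining evaluation'' you note would still be needed on that route. Your argument instead bypasses Theorems \ref{finmainabc} and \ref{fingGaravan} entirely: expanding $1/(1-cq^n)$ as a geometric series, identifying the inner sum as $1-(q^{k+1})_N$ via the finite expansion \eqref{binomial0}, and summing $\sum_{k\ge 0}c^k(q^{k+1})_N$ with the $q$-binomial theorem \eqref{q-binomial thm} is elementary and self-contained; the interchange of summation and the continuation in $c$ are handled correctly (note only that the right-hand side has a removable singularity at $c=1$ rather than a pole, which is precisely why the continuation to all admissible $c$ goes through). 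What the paper's route buys is uniformity: the corollary then sits alongside Corollary \ref{fin_Ram_Entry4} and the other specializations of Theorem \ref{fingGaravan} recorded in Section \ref{prooffinmainabc}, and the same Chu-Vandermonde step recurs later in the paper (compare \eqref{applyChu} with \eqref{appl_Chu}). What yours buys is independence from the ${}_4\phi_3$ machinery behind Theorem \ref{finmainabc}, together with the transparent intermediate form $\sum_{k\ge 0}c^k\bigl(1-(q^{k+1})_N\bigr)$, which makes the coefficient of $c^k$ on the left-hand side combinatorially visible.
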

\begin{proof}
Let $z=1$ in Theorem \ref{fingGaravan} and use the elementary identity $(c q)_{N-n}/(cq)_N = 1/(cq^{N+1-n})_n$ to obtain
\begin{align}
\sum_{n=1}^{N} \left[\begin{matrix} N\\n\end{matrix}\right] \frac{(-1)^{n-1}  q^{\frac{n(n+1)}{2}}}{ (1-cq^n)  } & = \frac{1}{c-1}\sum_{n=1}^{N}   \frac{(1/c)_n (cq)^n}{(q)_n} \frac{(q^{N+1-n})_n}{(c q^{N+1-n})_n} \nonumber\\
& = \frac{1}{c-1}\sum_{n=1}^{N}   \frac{(1/c)_n q^n}{(q)_n} \frac{(q^{-N})_n}{ \left(\frac{q^{-N}}{c}\right)_n}, \label{last}
\end{align}
where in the last step, we used \eqref{e2}. Now use the $q$-Chu-Vandermonde identity \eqref{Chu-Vandermonde} with $a = 1/c, M=N, $ and $d= q^{-N}/c$ to deduce
\begin{align}\label{applyChu}
\sum_{n=0}^{N}   \frac{(1/c)_n q^n}{(q)_n} \frac{(q^{-N})_n}{ \left(\frac{q^{-N}}{c}\right)_n}= \frac{(q^{-N})_N}{\left(\frac{q^{-N}}{c}\right)_N c^N } = \frac{(q)_N}{(cq)_{N}}. 
\end{align} 
Substituting \eqref{applyChu} in \eqref{last} completes the proof.
\end{proof}
Following is a finite analogue of Ramanujan's identity \eqref{Garvan's identity}.
\begin{corollary}\label{fin_Ram_Entry4}
Let $N\in\mathbb{N}$. For any $z \neq q^{-n}$ for $1 \leq n \leq N$, we have
\begin{align*}
\sum_{n=1}^{N}\left[\begin{matrix} N\\n\end{matrix}\right]\frac{(-1)^{n-1}z^nq^{\frac{n(n+1)}{2}}(q)_{n}}{(1-q^{n})(zq)_n} = \sum_{n=1}^N \frac{z q^n }{1- z q^n}. 
\end{align*}
\end{corollary}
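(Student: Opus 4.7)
The plan is to apply Theorem \ref{fingGaravan} at the specialization $c=1$. First, I would check admissibility: the hypothesis of Theorem \ref{fingGaravan} requires $c\neq q^{-n}$ for $1\leq n\leq N$, and since $|q|<1$ we have $q^{-n}\neq 1$, so $c=1$ is allowed. With this substitution, the left-hand side of \eqref{fingen of Garavan} coincides verbatim with the left-hand side of the corollary, so everything hinges on simplifying the right-hand side.

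The second step is the algebraic reduction of the right-hand side of \eqref{fingen of Garavan} at $c=1$, which becomes
\begin{equation*}
z\sum_{n=1}^{N}\left[\begin{matrix} N\\n\end{matrix}\right]\frac{(zq)_{n-1}(q)_{n}(q)_{N-n}q^{n}}{(zq)_{n}(q)_{N}}.
\end{equation*}
Using the definition $\left[\begin{matrix} N\\n\end{matrix}\right]=\frac{(q)_N}{(q)_n(q)_{N-n}}$, the combination $\left[\begin{matrix} N\\n\end{matrix}\right](q)_n(q)_{N-n}/(q)_N$ collapses to $1$. Then using $(zq)_n=(1-zq^n)(zq)_{n-1}$, the $n$th summand becomes $zq^n/(1-zq^n)$, which matches the $n$th term on the right-hand side of the corollary.

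There is really no main obstacle here: the entire content of the proof is the already-established Theorem \ref{fingGaravan}, and the remaining work is a routine telescoping of Pochhammer symbols. The only care required is the observation that the $c=1$ specialization is non-singular, because the prefactor $z/c$ remains finite and the shifted Pochhammer $(zq/c)_{n-1}$ evaluates to $(zq)_{n-1}$ without introducing poles; hence one obtains the identity directly rather than via a limiting argument.
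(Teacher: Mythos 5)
Your proposal is correct and is exactly the paper's proof: the authors simply set $c=1$ in Theorem \ref{fingGaravan}, and your explicit simplification of the resulting right-hand side (the $q$-binomial cancellation and the collapse of $(zq)_{n-1}/(zq)_n$ to $1/(1-zq^n)$) is the routine step the paper leaves implicit.
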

\begin{proof}
Let $c=1$ in Theorem \ref{fingGaravan}.
\end{proof}
\begin{corollary}
Let $N\in\mathbb{N}$. For $c\neq q^{-n}, 1\leq n\leq N$,
\begin{equation}\label{fingen of sigma(q)}
\sum_{n=1}^{N}\left[\begin{matrix} N\\n\end{matrix}\right]\frac{q^{\frac{n(n+1)}{2}}(q)_{n}}{(1-cq^{n})(-q)_n}=
\frac{1}{c}\sum_{n=1}^{N}\left[\begin{matrix} N\\n\end{matrix}\right]\frac{(-q/c)_{n-1}(q)_{n} (cq)_{N-n} (cq)^{n}}{(-q)_{n}(cq)_{N}}.
\end{equation}
\end{corollary}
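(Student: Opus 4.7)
The plan is to derive this identity as a direct specialization of Theorem \ref{fingGaravan}, just as Corollaries \ref{fin_AGL} and \ref{fin_Ram_Entry4} were obtained by setting $z = 1$ and $c = 1$ respectively in \eqref{fingen of Garavan}. The clue as to which specialization to take is the presence of $(-q)_n$ in the denominator of the left-hand side of \eqref{fingen of sigma(q)}, which would arise from $(zq)_n$ at $z = -1$. So the first (and essentially only) step is to put $z = -1$ in equation \eqref{fingen of Garavan}.

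Carrying out this substitution, $(zq)_n = (-q)_n$ and $(zq/c)_{n-1} = (-q/c)_{n-1}$, while the $z$-dependent prefactor on the left becomes $(-1)^{n-1}z^n = (-1)^{n-1}(-1)^n = -1$, independent of $n$, and $z/c = -1/c$ on the right. Both sides therefore carry a common overall factor of $-1$, which cancels, leaving exactly \eqref{fingen of sigma(q)}. The admissibility hypothesis $z \neq q^{-n}$ ($1\leq n \leq N$) in Theorem \ref{fingGaravan} is vacuously satisfied at $z = -1$ for $|q|<1$, and the stated restriction on $c$ is inherited verbatim. There is no genuine obstacle here; the only step requiring care is the sign bookkeeping. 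The real interest of the corollary lies downstream: letting $N\to\infty$ and combining with the $q$-binomial theorem produces a one-parameter generalization of the finite analogue of Ramanujan's function $\sigma(q)$ from \eqref{sigq}, which is the reason this specific specialization is singled out.
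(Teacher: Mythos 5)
Your proposal is correct and is precisely the paper's own proof: the authors also obtain \eqref{fingen of sigma(q)} by setting $z=-1$ in Theorem \ref{fingGaravan}, with the common factor of $-1$ cancelling from both sides exactly as you describe. Nothing further is needed.
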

\begin{proof}
Let $z=-1$ in Theorem \ref{fingGaravan}.
\end{proof}
One may also specialize $z$ and $c$ in terms of $q^{m}, m\in\mathbb{N}\cup\{0\}$, to obtain other corollaries.

Note that when $c=0$, either side of \eqref{fingen of sigma(q)} gives a finite analogue of $\sigma(q)$ defined in \eqref{sigq}. We define this finite analogue by $\sigma(q,N)$ so that
\begin{equation}\label{sqn}
\sigma(q,N):=\sum_{n=0}^{N}\left[\begin{matrix} N\\n\end{matrix}\right] \frac{(q)_{n}q^{\frac{n(n+1)}{2}}}{(-q)_n}.
\end{equation}
The partition-theoretic interpretation of \eqref{sqn} is now given. Let $S_1$ be defined as in \eqref{S1}. Let $\vec{\pi}\in S_1$. Define $w_2(\vec{\pi}):=(-1)^{\#(\pi_1)+\textup{rank}(\pi_2)}$. Then 
\begin{equation*}
\sigma(q,N)=\sum_{m=0}^{\infty}\left(\sum_{n=1}^{N}\sum_{\substack{\vec{\pi}\in S_1\\ |\vec{\pi}|=m}}w_2(\vec{\pi})\right)q^m.
\end{equation*}
We now state and prove another corollary of Theorem \ref{finmainabc}.
\begin{corollary}\label{entry5fin}
For $N\in\mathbb{N}$,
\begin{equation*}
\sum_{n=1}^{N}\left[\begin{matrix} N\\n\end{matrix}\right]\frac{(q)_{n-1}(q)_n(zq)_{N-n}(zq)^n}{(zq)_n(zq)_{N}(1-cq^n)}=z\sum_{n=1}^{N}\left[\begin{matrix} N\\n\end{matrix}\right]\frac{(zq/c)_{n-1}(q)_n(cq)_{N-n}c^{n-1}q^n}{(zq)_n(cq)_{N}(1-zq^n)}.
\end{equation*}
\end{corollary}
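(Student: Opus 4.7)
The plan is to derive Corollary \ref{entry5fin} from Theorem \ref{finmainabc} by the substitution $a = zq$, $b = zqt$ in \eqref{finmainabceqn}, followed by dividing both sides by a common factor of $(1-t)$ and then letting $t \to 1$. The key point is that when $b = a$ (equivalently $t = 1$), both sides of \eqref{finmainabceqn} vanish: the left-hand side carries $(b/a;q)_n = (1;q)_n$, which equals $0$ for $n\geq 1$, while the right-hand side contains the difference $\frac{aq^{n-1}}{1-aq^{n-1}}-\frac{bq^{n-1}}{1-bq^{n-1}}$, which vanishes when $a=b$. Extracting the simple zero at $t=1$ on each side will produce the corollary.

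On the left, we use $(b/a;q)_n = (t;q)_n = (1-t)(tq;q)_{n-1}$, while on the right a direct computation gives
\begin{equation*}
\frac{zq^{n}}{1-zq^{n}}-\frac{ztq^{n}}{1-ztq^{n}} = \frac{zq^{n}(1-t)}{(1-zq^{n})(1-ztq^{n})}.
\end{equation*}
Cancelling the common $(1-t)$ on both sides of \eqref{finmainabceqn} and letting $t\to 1$, the left-hand side of \eqref{finmainabceqn} reduces termwise to
\begin{equation*}
\sum_{n=1}^{N}\left[\begin{matrix} N\\n\end{matrix}\right]\frac{(q)_{n-1}(q)_n(zq)_{N-n}(zq)^n}{(1-cq^n)(zq)_n(zq)_N},
\end{equation*}
which is exactly the left-hand side of Corollary \ref{entry5fin}.

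The right-hand side of \eqref{finmainabceqn}, after the same cancellation and limit, becomes
\begin{equation*}
z\sum_{n=1}^{N}\left[\begin{matrix} N\\n\end{matrix}\right]\frac{(zq/c)_{n-1}(q)_n(cq)_{N-n}c^{n-1}q^n}{(zq)_{n-1}(cq)_N(1-zq^n)^2}.
\end{equation*}
Applying the elementary identity $(zq)_{n-1}(1-zq^n)=(zq)_n$ to absorb one of the two $(1-zq^n)$ factors in the denominator converts this expression into the right-hand side of Corollary \ref{entry5fin}, completing the proof.

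The only real work is careful bookkeeping of the $q$-Pochhammer symbols and the various powers of $q$, $z$, and $c$ through the substitution and limit; no conceptual obstacle arises. In spirit this confluent limit $b\to a$ parallels the derivation of Theorem \ref{fingGaravan} from Theorem \ref{finmainabc} via the limit $a\to 0$, and it appears to be the natural mechanism for producing a two-parameter identity in which both $z$ and $c$ retain nontrivial, symmetric-looking roles.
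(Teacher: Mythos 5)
Your proposal is correct and is essentially identical to the paper's proof, which divides both sides of Theorem \ref{finmainabc} by $1-b/a$, lets $b\to a$, and then sets $a=zq$; your substitution $b=zqt$ with division by $1-t$ is just a reparametrization of that same confluent limit. The bookkeeping you outline (extracting $(1-t)$ from $(t;q)_n$, the partial-fraction computation of the difference of the two fractions, and absorbing one factor of $1-zq^n$ via $(zq)_{n-1}(1-zq^n)=(zq)_n$) checks out.
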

\begin{proof}
Divide both sides of Theorem \ref{finmainabc} by $1-b/a$, let $b\to a$, then replace $a$ by $zq$ and simplify.
\end{proof}
This result, in turn, has two nice corollaries, the first of which is a generalization of an identity of Corteel and Lovejoy \cite[p.~1631]{overp}.
\begin{corollary}
Let $N\in\mathbb{N}$. Then,
\begin{align*}
\sum_{n=1}^{N} \frac{ q^n }{1- q^{2n}} = \sum_{n=1}^{N}\left[\begin{matrix} N\\n\end{matrix}\right] (-1)^{n-1} \frac{(-q)_{n-1}(-q)_{N-n}}{ (-q)_{N}} \frac{q^n}{1-q^n}.
\end{align*}
\end{corollary}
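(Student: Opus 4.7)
The plan is to derive this corollary as a direct specialization of Corollary \ref{entry5fin}, choosing the parameters so that the denominator $1 - c q^n$ on the left and $1 - z q^n$ on the right combine via the factorization $1 - q^{2n} = (1-q^n)(1+q^n)$. The natural choice is $z = 1$, $c = -1$.

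With $z = 1$ and $c = -1$ in Corollary \ref{entry5fin}, the left-hand side becomes
\begin{equation*}
\sum_{n=1}^{N}\left[\begin{matrix} N\\n\end{matrix}\right]\frac{(q)_{n-1}(q)_n(q)_{N-n}\,q^n}{(q)_n(q)_{N}(1+q^n)}.
\end{equation*}
Using $\left[\begin{matrix} N\\n\end{matrix}\right]=(q)_N/((q)_n(q)_{N-n})$, two of the three $q$-Pochhammer factors cancel against the binomial coefficient, and combining the leftover $(q)_{n-1}/(q)_n = 1/(1-q^n)$ with $1/(1+q^n)$ yields $1/(1-q^{2n})$, so the LHS collapses to $\sum_{n=1}^N q^n/(1-q^{2n})$, exactly as required.

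For the right-hand side, the same substitution turns $(zq/c)_{n-1}$ into $(-q)_{n-1}$, $(cq)_{N-n}$ into $(-q)_{N-n}$, $(cq)_N$ into $(-q)_N$, and $c^{n-1}$ into $(-1)^{n-1}$. After the $(q)_n$ in the numerator cancels the $(q)_n$ in the denominator from $(zq)_n = (q)_n$, one is left with
\begin{equation*}
\sum_{n=1}^{N}\left[\begin{matrix} N\\n\end{matrix}\right](-1)^{n-1}\frac{(-q)_{n-1}(-q)_{N-n}}{(-q)_N}\frac{q^n}{1-q^n},
\end{equation*}
which is precisely the claimed right-hand side. Equating the two simplified expressions completes the proof.

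There is no real obstacle here; the proof is a mechanical specialization. The only point requiring a moment of care is to keep track of the sign $c^{n-1} = (-1)^{n-1}$ and to verify that the factor $(zq)_n$ in the denominator of the right-hand side of Corollary \ref{entry5fin}, which becomes $(q)_n$, correctly cancels the $(q)_n$ sitting in the numerator; once this bookkeeping is done, the identity drops out.
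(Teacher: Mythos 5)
Your proof is correct and follows exactly the paper's route: the paper's own proof is the one-line instruction ``Let $z=-c=1$ in Corollary \ref{entry5fin},'' which is precisely your substitution $z=1$, $c=-1$, and your bookkeeping of the resulting cancellations is accurate. Nothing further is needed.
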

\begin{proof}
Let $z=-c=1$ in Corollary \ref{entry5fin}.
\end{proof}
\begin{corollary}
Let $N\in\mathbb{N}$ and $|q|<1$. Then,
\begin{align*}
\sum_{n=1}^{N}\left[\begin{matrix} N\\n\end{matrix}\right] \frac{(q)_{n-1}(q)_n(-q)_{N-n}}{ (-q)_{n-1}(-q)_{N}} \frac{(-q)^n}{1-q^{2n}}=\sum_{n=1}^{\infty} \frac{n (-q)^n(1-q^{Nn}) }{1- q^{n}}.
\end{align*}
\end{corollary}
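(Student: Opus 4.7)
The plan is to deduce this corollary from Corollary \ref{entry5fin} by choosing the specialization $z = -1$, $c = 1$, and then to massage the resulting elementary $q$-series on the right into the infinite series on the right-hand side of the statement via a standard geometric expansion. Note that this is dual to the preceding corollary, which used the choice $z = -c = 1$; here I would instead pick $z = -1$ while $c = 1$.

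First, I would substitute $z=-1$, $c=1$ into Corollary \ref{entry5fin} to obtain
\begin{align*}
\sum_{n=1}^{N}\left[\begin{matrix} N\\n\end{matrix}\right]\frac{(q)_{n-1}(q)_n(-q)_{N-n}(-q)^n}{(-q)_n(-q)_{N}(1-q^n)}
&= -\sum_{n=1}^{N}\left[\begin{matrix} N\\n\end{matrix}\right]\frac{(-q)_{n-1}(q)_n(q)_{N-n}\,q^n}{(-q)_n(q)_{N}(1+q^n)}.
\end{align*}
On the left-hand side, using the elementary factorizations $(-q)_n = (-q)_{n-1}(1+q^n)$ and $1-q^{2n} = (1-q^n)(1+q^n)$, the summand rewrites as
$\left[\begin{matrix} N\\n\end{matrix}\right]\frac{(q)_{n-1}(q)_n(-q)_{N-n}}{(-q)_{n-1}(-q)_N}\cdot\frac{(-q)^n}{1-q^{2n}}$,
which is exactly the summand on the left-hand side of the target identity.

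Next, I would collapse the right-hand side to a closed form. Using $\left[\begin{matrix} N\\n\end{matrix}\right](q)_{N-n}/(q)_N = 1/(q)_n$ to kill the $q$-binomial and the factor $(q)_n$, and then $(-q)_{n-1}/(-q)_n = 1/(1+q^n)$ to combine the remaining terms, the right-hand side reduces to
$$-\sum_{n=1}^{N}\frac{q^n}{(1+q^n)^2}.$$

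Finally, I would show that this matches the claimed series $\displaystyle\sum_{n=1}^{\infty}\frac{n(-q)^n(1-q^{Nn})}{1-q^n}$. Writing $(1-q^{Nn})/(1-q^n) = \sum_{k=0}^{N-1} q^{nk}$, interchanging the two sums, and applying the standard identity $\sum_{n=1}^{\infty} nx^n = x/(1-x)^2$ with $x = -q^m$, so that $\sum_{n=1}^{\infty} n(-1)^n q^{nm} = -q^m/(1+q^m)^2$, yields
\begin{align*}
\sum_{n=1}^{\infty}\frac{n(-q)^n(1-q^{Nn})}{1-q^n}
= \sum_{m=1}^{N}\sum_{n=1}^{\infty} n(-1)^n q^{nm}
= -\sum_{m=1}^{N}\frac{q^m}{(1+q^m)^2},
\end{align*}
which completes the identification. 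None of the steps presents any real obstacle; the only delicate point is spotting the correct specialization $z=-1$, $c=1$ (rather than $z=-c=1$) and then realizing that the resulting right-hand side, a finite sum of reciprocals of squares, encodes the stated infinite double sum after swapping summation orders.
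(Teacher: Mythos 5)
Your proposal is correct and follows exactly the paper's own route: the paper also specializes Corollary \ref{entry5fin} at $c=-z=1$ (i.e.\ $z=-1$, $c=1$) and then identifies $-\sum_{n=1}^{N}q^n/(1+q^n)^2$ with the stated infinite series by the same geometric-series expansion and interchange of summation. Your write-up merely fills in the routine simplifications that the paper leaves implicit.
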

\begin{proof}
Let $c=-z=1$ in Corollary \ref{entry5fin} and note that $\sum_{n=1}^{N}\frac{-q^n}{(1+q^n)^2}=\sum_{n=1}^{\infty} \frac{n (-q)^n(1-q^{Nn}) }{1- q^{n}}$.
\end{proof}

Before embarking upon the proof of Theorem \ref{fingenc}, which is responsible for much of the content in the sequel, we begin with a lemma.
\begin{lemma}\label{Second_Fines}
Let $N\in\mathbb{N}, |q|<1$ and $|cq|<1$. For $c\neq q^{-n}, 1\leq n\leq N$,
\begin{align}
\sum_{n=1}^{N} \frac{(-1)^{n-1}  q^{\frac{n(n+1)}{2}}}{ (1-cq^n) (q)_n (q)_{N-n} }  \left(\sum_{k=1}^n \frac{q^k}{1- q^k}  \right) = \sum_{n = 1}^{N}\left[\begin{matrix} N \nonumber \\ n \end{matrix}\right] \frac{q^{n(n + 1)}}{(q)_n (1 - q^n)} F(q^N, q^n; cq^n),
\end{align}
where $F(a,b;t)$ is the Fine's function defined in \eqref{finefunction}.
\end{lemma}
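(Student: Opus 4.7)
The plan is to work on the right-hand side: expand $F(q^N,q^n;cq^n)$ via the partial fraction decomposition \eqref{fine16.3}, reduce it to a finite double sum, swap the order of summation, and recognize the inner sum via van Hamme's identity \eqref{hammeid}. Applying \eqref{fine16.3} with $a=q^N$, $b=q^n$, $t=cq^n$, and noting that $(q^{n-N})_k$ vanishes as soon as $k>N-n$, the series truncates. Using the standard evaluation $(q^{-M})_k=(-1)^k q^{-Mk+k(k-1)/2}(q)_M/(q)_{M-k}$ with $M=N-n$, together with $(q^{N+1})_\infty/(q^{n+1})_\infty=(q)_n/(q)_N$, a short collapse of the $q$-exponent ($-(N-n)k+k(k-1)/2+(N+1)k$ becomes $nk+k(k+1)/2$) yields the closed form
\begin{equation*}
F(q^N,q^n;cq^n)=\frac{(q)_n}{(q)_N}\sum_{k=0}^{N-n}\left[\begin{matrix}N-n\\k\end{matrix}\right]\frac{(-1)^k\,q^{k(k+1)/2+nk}}{1-cq^{n+k}}.
\end{equation*}

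Next I would substitute this into the right-hand side of the lemma and change variables via $l=n+k$, swapping the order of summation so that $l$ ranges from $1$ to $N$ and $n$ from $1$ to $l$. Using the $q$-binomial identity
\begin{equation*}
\left[\begin{matrix}N\\n\end{matrix}\right]\left[\begin{matrix}N-n\\l-n\end{matrix}\right]=\left[\begin{matrix}N\\l\end{matrix}\right]\left[\begin{matrix}l\\n\end{matrix}\right],
\end{equation*}
and verifying the (slightly miraculous) algebraic simplification $n(n+1)+(l-n)(l-n+1)/2+n(l-n)=n(n+1)/2+l(l+1)/2$, the right-hand side becomes
\begin{equation*}
\frac{1}{(q)_N}\sum_{l=1}^{N}\left[\begin{matrix}N\\l\end{matrix}\right]\frac{(-1)^l\,q^{l(l+1)/2}}{1-cq^l}\sum_{n=1}^{l}\left[\begin{matrix}l\\n\end{matrix}\right]\frac{(-1)^n\,q^{n(n+1)/2}}{1-q^n}.
\end{equation*}

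Finally, van Hamme's identity \eqref{hammeid} applied with $N$ replaced by $l$ identifies the inner sum as $-\sum_{n=1}^{l}q^n/(1-q^n)$. Distributing the sign and rewriting $\left[\begin{matrix}N\\l\end{matrix}\right]/(q)_N=1/((q)_l(q)_{N-l})$ produces exactly the left-hand side of the lemma (after relabelling $l\mapsto n$ and the internal $n\mapsto k$). The step that needs real care is the $q$-exponent arithmetic after the $l=n+k$ substitution; everything else is routine bookkeeping with $q$-shifted factorials, and a sanity check at $N=1$ (where both sides equal $q^2/((1-q)^2(1-cq))$) confirms the identity in the simplest case.
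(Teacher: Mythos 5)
Your proposal is correct, and it is essentially the paper's own proof run in reverse: the paper starts from the left-hand side, expands $\sum_{k=1}^{n}q^k/(1-q^k)$ by van Hamme's identity \eqref{hammeid}, interchanges summation, and then resums the inner finite sum into $F(q^N,q^k;cq^k)$ via the partial-fraction expansion \eqref{fine16.3}, whereas you start from the right-hand side, truncate $F$ via \eqref{fine16.3}, interchange, and finish with van Hamme. The key identities, the double-sum interchange, and the exponent bookkeeping are identical, so this is the same argument traversed in the opposite direction.
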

\begin{proof}
Using van Hamme's identity \eqref{hammeid}, we have
\begin{align}\label{ram}
&\sum_{n=1}^{N} \frac{(-1)^{n-1}  q^{\frac{n(n+1)}{2}}}{ (1-cq^n) (q)_n (q)_{N-n} }  \left(\sum_{k=1}^n \frac{q^k}{1- q^k}  \right) \nonumber\\
&= \sum_{n=1}^{N} \frac{(-1)^{n-1}  q^{\frac{n(n+1)}{2}}}{ (1-cq^n) (q)_{N-n} } \sum_{k=1}^{n}\frac{(-1)^{k-1} q^{k(k+1)/2}}{(q)_k (q)_{n-k} (1-q^k)}\nonumber\\
&=\sum_{k=1}^{N}\frac{(-1)^{k-1} q^{k(k+1)/2}}{(q)_k (1-q^k)}\sum_{n=k}^{N}\frac{(-1)^{n-1}  q^{\frac{n(n+1)}{2}}}{(1-cq^n) (q)_{N-n}(q)_{n-k}}\nonumber\\
&=\sum_{k=1}^{N}\frac{ q^{k(k+1)}}{(q)_k  (1-q^k)} \sum_{m=0}^{N-k} \frac{(-1)^m q^{\frac{m(m+1)}{2}+mk}}{(1-c q^{m+k})(q)_{N-m-k}(q)_m }\nonumber\\
&=\sum_{k=1}^{N}\frac{ q^{k(k+1)}}{(q)_k (q)_{N-k} (1-q^k)}\sum_{m=0}^{N-k}  \frac{ \left( q^{-(N-k)} \right)_mq^{(N+1)m} }{(q)_m(1-c q^{m+k}) },
\end{align}
where in the last step, we used \eqref{e1} with $N$ and $n$ replaced by $N-k$ and $m$ respectively.
%\begin{align}\label{siml_iden}
%\left( q^{-(N-k)} \right)_m = \frac{(-1)^m (q)_{N-k}q^{\frac{m(m-1)}{2}}}{(q)_{N-m-k} q^{(N-k)m}}. 
%\end{align}
Now use \eqref{fine16.3} with $a=q^N, b= q^k$ and $t=c q^k$ for the inner sum so that
{\allowdisplaybreaks\begin{align}\label{aftfine16.3}
&\sum_{k=1}^{N}\frac{ q^{k(k+1)}}{(q)_k (q)_{N-k} (1-q^k)}\sum_{m=0}^{N-k}  \frac{ \left( q^{-(N-k)} \right)_mq^{(N+1)m} }{(q)_m(1-c q^{m+k}) }\nonumber\\
&= \sum_{k=1}^{N}\frac{ q^{k(k+1)}}{(q)_k  (1-q^k)} \frac{(q^{k+1})_\infty}{(q)_{N-k}(q^{N+1})_\infty} F(q^{N}, q^k; c q^k) \nonumber \\
& =   \sum_{k=1}^{N} \left[\begin{matrix} N\\k\end{matrix}\right] \frac{ q^{k(k+1)}}{ (q)_k (1-q^k)}  F(q^{N}, q^k; c q^k).
\end{align}} 
Substituting \eqref{aftfine16.3} in \eqref{ram} completes the proof.
\end{proof}
%\subsection{A combinatorial interpretation of $\sigma(q, N)$}\label{sqnci}
\begin{proof}[Theorem \textup{\ref{fingenc}}][]
Differentiate both sides of Theorem \ref{fingGaravan} with respect to $z$ and then let $z=1$ so as to obtain
\begin{align}\label{Putting_z=1}
& \sum_{n=1}^{N} \frac{(-1)^{n-1}  q^{\frac{n(n+1)}{2}}}{ (1-cq^n) (q)_n (q)_{N-n} }  \left(n + \sum_{k=1}^n \frac{q^k}{1- q^k}  \right) \nonumber \\
&  =\frac{ 1}{c} \sum_{n=1}^N \frac{\left(q/c\right)_{n-1} (c q)_{N-n} (cq)^n }{(q)_n (c q)_N (q)_{N-n} } \nonumber \\
&  + \frac{1}{c}  \sum_{n=1}^N \frac{\left(q/c\right)_{n-1} (c q)_{N-n} (cq)^n }{(q)_n (c q)_N (q)_{N-n} } \left(-\sum_{k=1}^{n-1} \frac{q^k/c}{1- q^k/c} + \sum_{k=1}^n \frac{q^k}{1-q^k} \right)\nonumber\\
&=:S_1+S_2.
\end{align}
From Theorem \ref{fingGaravan} with $z=1$ and Corollary \ref{fin_AGL},
\begin{equation}\label{s1}
S_1=\frac{1}{(1-c)(q)_N}\left( 1- \frac{(q)_N}{(cq)_N}\right).
\end{equation}
A result of Guo and Zhang \cite[Corollary 3.1]{guozhang} states that if $n\geq 0$ and $0\leq m\leq n$,
\begin{align*}
&\sum_{k=0\atop k\neq m}^{n}\left[\begin{matrix} n\\k\end{matrix}\right]\frac{(q/x)_k(x)_{n-k}}{1-q^{k-m}}x^k\nonumber\\
&=(-1)^mq^{\frac{m(m+1)}{2}}\left[\begin{matrix} n\\m\end{matrix}\right](xq^{-m})_n\left(\sum_{k=0}^{n-1}\frac{xq^{k-m}}{1-xq^{k-m}}-\sum_{k=0\atop k\neq m}^{n}\frac{q^{k-m}}{1-q^{k-m}}\right).
\end{align*}
Letting $m=0$ in the above identity and simplifying leads to
\begin{align}\label{m0}
\sum_{k=1}^{n}\frac{q^k}{1-q^k}-\sum_{k=1}^{n-1}\frac{xq^k}{1-xq^k}=\frac{x}{1-x}-\frac{1}{(x)_n}\sum_{k=1}^{n}\left[\begin{matrix} n\\k\end{matrix}\right]\frac{(q/x)_k(x)_{n-k}x^k}{1-q^k}.
\end{align}
Employing \eqref{m0} with $x=1/c$ to simplify the expression in parenthesis in $S_2$ and using \eqref{s1} in the last step below, it is seen that
\begin{align}\label{s2aftid}
S_2&=\frac{1}{c}  \sum_{n=1}^N \frac{\left(q/c\right)_{n-1} (c q)_{N-n} (cq)^n }{(q)_n (c q)_N (q)_{N-n} } \left( \frac{1}{c-1} - \frac{1}{(1/c)_n} \sum_{k=1}^n \left[\begin{matrix} n\\k\end{matrix}\right] \frac{(qc)_k (1/c)_{n-k} c^{-k}}{1- q^k}\right)\nonumber\\
&=\frac{-1}{(1-c)^2(q)_N}\left( 1- \frac{(q)_N}{(cq)_N}\right) +S_{2}^{*},
\end{align}
where 
\begin{equation}\label{s2s}
S_2^{*}=\frac{1}{(1-c)(cq)_N}  \sum_{n=1}^N \frac{ (c q)_{N-n} (cq)^n }{(q)_n  (q)_{N-n}  } \sum_{k=1}^n \left[\begin{matrix} n\\k\end{matrix}\right] \frac{(qc)_k (1/c)_{n-k} c^{-k}}{1- q^k}.
\end{equation}
Next we simplify $S_2^{*}$. Note that
\begin{align*}
 S_2^{*}& =\frac{1}{(1-c)(cq)_N} \sum_{k=1}^N \frac{(cq)_k q^k}{(q)_k (1-q^k) } \sum_{j=0}^{N-k} \frac{(1/c)_j (cq)^j (cq)_{N-j-k}}{(q)_j(q)_{N-j-k}} \nonumber \\
 & =  \frac{1}{(1-c)(cq)_N} \sum_{k=1}^N \frac{(cq)_k (cq)_{N-k}q^k}{(q)_k (1-q^k)(q)_{N-k} } \sum_{j=0}^{N-k} \frac{(1/c)_j q^j \left(q^{-(N-k)}\right)_j}{(q)_j \left(q^{-(N-k)}/c\right)_j},
\end{align*}
where in the last step, we used \eqref{e2} with $N$ replaced by $N-k$ and $n$ replaced by $j$. Now apply $q$-Chu-Vandermonde identity \eqref{Chu-Vandermonde} with $a = 1/c, M= N-k$, and  $d= q^{-(N-k)}/c$ to see that
\begin{equation}\label{appl_Chu}
\sum_{j=0}^{N-k} \frac{(1/c)_j q^j \left(q^{-(N-k)}\right)_j}{(q)_j \left(q^{-(N-k)}/c\right)_j}=\frac{(q)_{N-k}}{(cq)_{N-k}},
\end{equation}
where we again employed \eqref{e2} with $N$ and $n$ both replaced by $N-k$. Substituting \eqref{appl_Chu} in \eqref{s2s} we derive
\begin{equation*}
S_2^{*}=\frac{1}{(1-c)(cq)_N} \sum_{k=1}^N \frac{(cq)_k q^k}{(q)_k (1-q^k) }
\end{equation*}
so that from \eqref{s2aftid},
\begin{align}\label{final_second term}
S_2=\frac{-1}{(1-c)^2(q)_N}\left( 1- \frac{(q)_N}{(cq)_N}\right) + \frac{1}{(1-c)(cq)_N} \sum_{k=1}^N \frac{(cq)_k q^k}{(q)_k (1-q^k) }, 
\end{align}
and hence from \eqref{Putting_z=1}, \eqref{s1} and \eqref{final_second term},
\begin{align*}
& \sum_{n=1}^{N} \frac{(-1)^{n-1}  q^{\frac{n(n+1)}{2}}}{ (1-cq^n) (q)_n (q)_{N-n} }  \left(n + \sum_{k=1}^n \frac{q^k}{1- q^k}  \right) \nonumber \\ 
& =\frac{-c}{(1-c)^2(q)_N}\left( 1- \frac{(q)_N}{(cq)_N}\right) + \frac{1}{(1-c)(cq)_N} \sum_{k=1}^N \frac{(cq)_k q^k}{(q)_k (1-q^k) }.
\end{align*}
Finally, invoke Lemma \ref{Second_Fines} in the above equation to arrive at \eqref{finanalogmaineqn}.
\end{proof}

\section{A finite analogue of Andrews' identity for $\textup{spt}(n)$}\label{sptnNfunction}

\begin{proof}[Theorem \textup{\ref{fin_spt_q_identity}}][]
Let $c \rightarrow 1$ on both sides of   \eqref{finanalogmaineqn} to get 
\begin{align}\label{limit_c=1}
{} & \frac{1}{(q)_N}\sum_{n = 1}^{N} \left[\begin{matrix} N \\ n \end{matrix}\right]\frac{(-1)^{n - 1}nq^{n(n + 1)/2}}{1 - q^n} + 
\sum_{n = 1}^{N}\left[\begin{matrix} N  \\ n \end{matrix}\right] \frac{q^{n(n + 1)}}{(q)_n (1 - q^n)} F(q^N, q^n; q^n)=L(q, N), 
\end{align}
where
\begin{equation*}
L(q, N):=\lim_{c\rightarrow 1} \left[ \frac{c}{(1 - c)^2 (q)_N}\left\{\frac{(q)_N}{(cq)_N} - 1\right\} + \frac{1}{(cq)_N (1 - c)}\sum_{n = 1}^{N}\frac{(cq)_n}{(q)_n}\frac{q^n}{1 - q^n} \right].
\end{equation*}
Let
\begin{align}\label{second term_Fine}
S:= \sum_{n = 1}^{N}\left[\begin{matrix} N  \\ n \end{matrix}\right] \frac{q^{n(n + 1)}}{(q)_n (1 - q^n)} F(q^N, q^n; q^n).
\end{align}
From \cite[p.~13, Equation (12.2)]{fine}, we know that
\begin{align*}
F(a,b;t)= \frac{1}{1-t} \sum_{n=0}^{\infty} \frac{(b/a)_n (-at)^n q^{n(n+1)/2}}{(bq)_n (tq)_n}.
\end{align*}
Using the above representation with $a=q^{N}$ and $b=t=q^n$ in \eqref{second term_Fine} and then employing \eqref{e1} with $n$ replaced by $j$ and $N$ replaced by $N-n$ in the second step below, we have
\begin{align}\label{S}
S&= \sum_{n = 1}^{N}\left[\begin{matrix} N  \\ n \end{matrix}\right] \frac{q^{n(n + 1)}}{(q)_n (1 - q^n)^2} \sum_{j=0}^{\infty} \frac{\left( q^{-(N-n)} \right)_j (-1)^jq^{(N+n)j} q^{j(j+1)/2}}{ (q^{n+1})_j^2}\nonumber\\
&= \sum_{n = 1}^{N}\left[\begin{matrix} N  \\ n \end{matrix}\right] \frac{q^{n(n + 1)}}{(q)_n (1 - q^n)^2} \sum_{j=0}^{N-n} \frac{ q^{j^2 + 2n j} (q)_{N-n}}{ (q)_{N-n-j}(q^{n+1})_j^2} \nonumber \\
& = (q)_N \sum_{n = 1}^{N} \frac{q^{n(n + 1)}}{(q)_n^2 (1 - q^n)^2} \sum_{j=0}^{N-n} \frac{ q^{j^2 + 2n j} }{ (q)_{N-n-j}(q^{n+1})_j^2} \nonumber \\
& =  (q)_N \sum_{n = 1}^{N} \frac{q^{n}}{ (1 - q^n)^2} \sum_{j=0}^{N-n} \frac{ q^{(j+n)^2} }{ (q)_{N-n-j}(q)_{n+j}^2}\nonumber\\
& =  (q)_N \sum_{n = 1}^{N} \frac{q^{n}}{ (1 - q^n)^2} \sum_{j=n}^{N} \frac{ q^{j^2} }{ (q)_{N-j}(q)_{j}^2}\nonumber\\
&=(q)_N \sum_{j=1}^{N} \frac{ q^{j^2} }{ (q)_{N-j}(q)_{j}^2}\sum_{n = 1}^{j} \frac{q^{n}}{ (1 - q^n)^2}.
\end{align}
To evaluate $L(q, N)$, first let $\tau=1$ and $\a=-c$ in \eqref{rowyee} to obtain
\begin{align*}
 \sum_{n=0}^{N}   \frac{(c)_n  q^n}{(q)_n} = \frac{(c q)_N}{( q)_N}. 
\end{align*}
Using this in the second step below, we see that
{\allowdisplaybreaks\begin{align}
L&= \lim_{c\rightarrow 1} \frac{1}{(cq)_N}\left[ \frac{c }{(1 - c)^2 }\left(1 - \frac{(cq)_N}{(q)_N} \right) + \frac{1}{ (1 - c)}\sum_{n = 1}^{N}\frac{(cq)_n}{(q)_n}\frac{q^n}{1 - q^n} \right]\nonumber\\
& = \frac{1}{(q)_N} \lim_{c\rightarrow 1} \left[ \frac{c }{(c-1) } \sum_{n=1}^{N} \frac{(cq)_{n-1} q^n}{(q)_n} + \frac{1}{ (1 - c)}\sum_{n = 1}^{N}\frac{(cq)_n}{(q)_n}\frac{q^n}{1 - q^n} \right] \nonumber \\
& = \frac{1}{(q)_N} \lim_{c\rightarrow 1} \left[ \frac{1}{(1-c) } \sum_{n=1}^{N} \frac{(cq)_{n-1} q^n}{(q)_n} \left( -c +  \frac{1-cq^n}{1-q^n}\right) \right] \nonumber \\
& =   \frac{1}{(q)_N}  \sum_{n=1}^N \frac{q^n}{(1-q^n)^2}\label{6s}\\
& = \frac{1}{(q)_N}\left(\sum_{n=1}^\infty \frac{q^n}{(1-q^n)^2} - \sum_{n=N+1}^\infty \frac{q^n}{(1-q^n)^2}\right)\nonumber\\
& = \frac{1}{(q)_N}\left(\sum_{m=1}^\infty m \sum_{n=1}^\infty (q^m)^n - \sum_{m=1}^\infty m \sum_{n=N+1}^\infty (q^m)^n\right) \nonumber \\
& = \frac{1}{(q)_N}\sum_{m=1}^\infty \frac{m q^m(1-q^{Nm})}{1-q^m}.\label{final_limit_c=1}
\end{align}}
Now substitute \eqref{S} and \eqref{final_limit_c=1} in \eqref{limit_c=1} to complete the proof.
\end{proof}
%We are now ready to prove Theorem \ref{fin_spt_identity}.
As remarked in Section \ref{new}, we now show that Theorem \ref{fin_spt_q_identity} is nothing but the analytical version of Theorem \ref{fin_spt_identity}. We first need a lemma.
\begin{lemma} \label{q_N id}
We have
\begin{equation*}
(q)_N = \sum_{n=1}^{N+1}
\left[\begin{matrix} N+1 \\ n \end{matrix}\right](-1)^{n-1}nq^{n(n-1)/2}.
\end{equation*}
\end{lemma}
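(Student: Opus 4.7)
The plan is to start from the $q$-binomial identity \eqref{binomial0}, which with $N$ replaced by $N+1$ reads
\begin{equation*}
(z)_{N+1} = \sum_{n=0}^{N+1} \left[\begin{matrix} N+1 \\ n \end{matrix}\right] (-1)^{n} z^{n} q^{n(n-1)/2},
\end{equation*}
and differentiate both sides with respect to $z$. Since the $n=0$ term is constant, differentiating the right-hand side termwise yields
\begin{equation*}
\frac{d}{dz}(z)_{N+1} = \sum_{n=1}^{N+1} \left[\begin{matrix} N+1 \\ n \end{matrix}\right] (-1)^{n} n\, z^{n-1} q^{n(n-1)/2},
\end{equation*}
which at $z=1$ gives exactly the right-hand side of the lemma (up to a sign).

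For the left-hand side, I would factor $(z)_{N+1} = (1-z)(zq)_{N}$, so that by the product rule
\begin{equation*}
\frac{d}{dz}(z)_{N+1} = -(zq)_{N} + (1-z)\,\frac{d}{dz}(zq)_{N}.
\end{equation*}
Evaluating at $z=1$ kills the second term and leaves $-(q)_{N}$. Equating the two expressions at $z=1$ and multiplying through by $-1$ gives the claimed identity.

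There is essentially no obstacle here; the only care needed is to avoid direct logarithmic differentiation of $(z)_{N+1}$, whose individual summands blow up at $z=1$ where $(z)_{N+1}$ itself vanishes. Isolating the factor $(1-z)$ before differentiating sidesteps this issue cleanly and makes the limit $z\to 1$ trivial.
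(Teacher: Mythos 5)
Your proposal is correct and follows essentially the same route as the paper: both differentiate the $q$-binomial identity \eqref{binomial0} with $N$ replaced by $N+1$ and then set $z=1$. The only (cosmetic) difference is that you factor out $(1-z)$ before differentiating, whereas the paper differentiates logarithmically and then takes the limit $z\to 1$; both handle the apparent singularity correctly.
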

\begin{proof}
Replace $N$ by $N+1$ in \eqref{binomial0} and then differentiate both sides with respect to $z$ so as to obtain
\begin{equation*}
 -(z)_{N+1}\sum_{k=0}^{N}\frac{q^k}{1-zq^k} = \sum_{n=0}^{N+1} \left[\begin{matrix} N+1 \\ n \end{matrix}\right](-1)^{n}nz^{n-1} q^{n(n-1)/2}.
\end{equation*}
%Thus,
%\begin{align*}
%{} & (1-z)(1-zq)\dots(1-zq^N)\left\{\frac{1}{1-z}+\frac{q}{1-zq}+ \cdots + \frac{q^N}{1-zq^N}\right\} \\
%= & \ \sum_{n=0}^{N+1} \left[\begin{matrix} N+1 \\ n \end{matrix}\right](-1)^{n-1}nz^{n-1} q^{n(n-1)/2}. 
%\end{align*}
Now let $z \rightarrow 1$ and observe that the left hand side becomes $-(1-q)(1-q^2)\cdots(1-q^N)$, and thus we have the result.
\end{proof}
\textbf{First proof of Theorem \ref{fin_spt_identity}.}
%\begin{proof}[Theorem \textup{\ref{fin_spt_identity}}][]
We begin by proving
\begin{align}\label{gen_finite_spt}
\sum_{n=1}^\infty \textup{spt}(n,N) q^n = \frac{1}{(q)_N}\sum_{n = 1}^{N} \left[\begin{matrix} N \\ n \end{matrix}\right]\frac{(-1)^{n - 1}nq^{n(n + 1)/2}}{1 - q^n},
\end{align}
where $\textup{spt}(n,N)$ is given in Definition 1. By a simple combinatorial argument, one can see that
\begin{align}\label{gen_spt(n,N)}
\sum_{n=1}^\infty \textup{spt}(n,N) q^n  = \sum_{n=1}^N \frac{q^n}{(1-q^n)^2(1- q^{n+1})\cdots (1-q^N)}.
\end{align}
Let $A(q, N)$ and $B(q, N)$ denote the right-hand sides of \eqref{gen_spt(n,N)} and \eqref{gen_finite_spt} respectively. It suffices to show that $A(q, N)=B(q, N)$.  We apply induction on $N$ to prove this. Note that  
\begin{equation*}
A(q, 1)=B(q, 1)= \frac{q}{(1-q)^2}.
\end{equation*}
By induction hypothesis, assume that $A(q, N)=B(q, N)$. We then show that $A(q, N+1)=B(q, N+1)$. This is done by showing that both $A(q, N)$ and $B(q, N)$ satisfy the recurrence relation
\begin{align*}
f(q, N+1)= \frac{f(q, N)}{1-q^{N+1}} + \frac{q^{N+1}}{(1-q^{N+1})^2}.
\end{align*} 
Clearly $A(q, N)$ satisfies the above recurrence relation. To prove that $B(q, N)$ does so too, we first separate the $(N+1)$-th term in $B(q, N+1)$ so that
\begin{align*}
B(q, N+1) = \sum_{n = 1}^{N} \frac{(-1)^{n - 1}nq^{n(n + 1)/2}}{(q)_n (1 - q^n) (q)_{N-n}} \frac{1}{(1 - q^{N+1-n})} + \frac{(-1)^N (N+1) q^{(N+1)(N+2)/2}}{(q)_{N+1} (1- q^{N+1})}.
\end{align*}
Thus,
\begin{align*}
B(q, N+1) - \frac{B(q, N)}{1-q^{N+1}} & = \sum_{n = 1}^{N} \frac{(-1)^{n - 1}nq^{n(n - 1)/2}}{(q)_n  (q)_{N+1-n}} \frac{q^{N+1}}{(1 - q^{N+1})} + \frac{(-1)^N (N+1) q^{(N+1)(N+2)/2}}{(q)_{N+1} (1- q^{N+1})} \nonumber \\
& =  \frac{q^{N+1}}{(1 - q^{N+1})} \sum_{n = 1}^{N+1} \frac{(-1)^{n - 1}nq^{n(n - 1)/2}}{(q)_n  (q)_{N+1-n}} \nonumber \\
& =\frac{q^{N+1}}{(1 - q^{N+1}) (q)_{N+1}} \sum_{n = 1}^{N+1} \left[\begin{matrix} N+1 \\ n \end{matrix}\right] (-1)^{n - 1}nq^{n(n - 1)/2} \nonumber \\
&=\frac{q^{N+1}}{(1-q^{N+1})^2},
\end{align*}
by an application of Lemma \ref{q_N id}. This proves \eqref{gen_finite_spt}.

Next, we have
\begin{align}
\sum_{j = 1}^{N} \left[\begin{matrix} N \\ j \end{matrix}\right] \frac{q^{j^2}}{(q)_j} \sum_{k=1}^j \frac{q^k}{(1-q^k)^2 }&=\frac{1}{2} \frac{d^2}{dz^2}\left( \sum_{j=1}^N \left[\begin{matrix} N \\ j \end{matrix}\right] \frac{q^{j^2} (q)_j }{(z q)_j (z^{-1} q)_j} \right)_{z=1}, \label{2drc1}\\
\frac{1}{(q)_N} \sum_{n=1}^{\infty} \frac{n q^n(1- q^{N n})}{1- q^n}&=\frac{1}{(q)_N}\sum_{n=1}^{N}\frac{q^n}{(1-q^n)^2}=\frac{1}{2} \frac{d^2}{dz^2}\left( \frac{(q)_N}{(z q)_N(z^{-1}q)_N} \right)_{z=1},\label{2drc2}
\end{align}
both of which are easily proved by routine differentiation techniques, with the first equality in \eqref{2drc2} resulting from \eqref{6s} and \eqref{final_limit_c=1}. Thus from \eqref{gen_finite_spt}, \eqref{2drc1}, \eqref{2drc2} and Theorem \ref{fin_spt_q_identity}, we deduce that
\begin{align}\label{3}
\sum_{n=1}^\infty \textup{spt}(n,N) q^n = \frac{1}{2} \frac{d^2}{dz^2}\left( \frac{(q)_N}{(z q)_N(z^{-1}q)_N} \right)_{z=1} - \frac{1}{2} \frac{d^2}{dz^2}\left( \sum_{j=1}^N \left[\begin{matrix} N \\ j \end{matrix}\right] \frac{q^{j^2} (q)_j }{(z q)_j (z^{-1} q)_j} \right)_{z=1}.
\end{align}
Now from Theorem \ref{frgfthm} and \eqref{finrankmom} and the fact that any odd finite rank moment is equal to zero, we see that
\begin{equation}\label{4}
\frac{d^2}{dz^2}\left( \sum_{j=1}^N \left[\begin{matrix} N \\ j \end{matrix}\right] \frac{q^{j^2} (q)_j }{(z q)_j (z^{-1} q)_j} \right)_{z=1}=\sum_{n=1}^{\infty}N_{2,N}(n)q^n.
\end{equation}
Similarly from Theorem \ref{fcgfthm} and \eqref{fincrankmom},
\begin{equation}\label{5}
\frac{d^2}{dz^2}\left( \frac{(q)_N}{(z q)_N(z^{-1}q)_N} \right)_{z=1}=\sum_{n=1}^{\infty}M_{2,N}(n)q^n.
\end{equation}
Thus from \eqref{3}, \eqref{4} and \eqref{5}, we arrive at \eqref{1}. 

To prove \eqref{2}, first note that
\begin{align*}
\sum_{n=1}^{\infty} \frac{n q^n(1- q^{N n})}{1- q^n}&=\sum_{n=1}^{\infty}\sum_{m=1}^{\infty} nq^{mn}-\sum_{n=1}^{\infty}\sum_{m=N+1}^{\infty} nq^{mn}.
\end{align*}
Clearly,
\begin{equation*}
\sum_{n=1}^{\infty}\sum_{m=1}^{\infty} nq^{mn}=\sum_{k=1}^{\infty}\left(\sum_{n|k}n\right)q^k.
\end{equation*}
As for the second sum, let $mn=k$. Then $k=mn>Nn$. Hence
\begin{equation*}
\sum_{n=1}^{\infty}\sum_{m=N+1}^{\infty} nq^{mn}=\sum_{k=1}^{\infty}\left(\sum_{n|k\atop n<k/N}n\right)q^k,
\end{equation*}
and so
\begin{align*}
\sum_{n=1}^{\infty} \frac{n q^n(1- q^{N n})}{1- q^n}=\sum_{k=1}^{\infty}\sigma(k, N)q^k,
\end{align*}
where $\sigma(k, N)$ is defined in \eqref{skn}. Hence
\begin{align}\label{7}
\frac{1}{(q)_N}\sum_{n=1}^{\infty} \frac{n q^n(1- q^{N n})}{1- q^n}=\sum_{n=1}^{\infty}\left(\sum_{j=0}^{n-1}p(j, n)\sigma(n-j,N)\right)q^n.
\end{align}
Therefore from Theorem \ref{fin_spt_q_identity}, \eqref{gen_finite_spt}, \eqref{4} and \eqref{7}, we arrive at \eqref{2}.
%\end{proof}
\qed\\

As mentioned in Section \ref{new}, we now offer another proof of Theorem \ref{fin_spt_identity} closely following Andrews' proof of \eqref{Andrews_spt0} in \cite{andrews08}.\\

\textbf{Second proof of Theorem \ref{fin_spt_identity}.}
Let $a\to 1, d=e^{-1}=z$ followed by $b, c\to\infty$ in Watson's $q$-analogue of Whipple's theorem \eqref{watson87} so as to obtain after simplification
\begin{align*}
&1+(1-z)(1-z^{-1})\sum_{n=1}^{N}\frac{(-1)^n(1+q^n)q^{\frac{n(3n+1)}{2}}(q)_{N}^2}{(1-zq^n)(1-z^{-1}q^n)(q)_{N-n}(q)_{N+n}}\nonumber\\
&=\frac{(q)_N^{2}}{(zq)_N(z^{-1}q)_N}\sum_{n=0}^{N}\frac{(z)_{n}(z^{-1})_{n}q^n}{(q)_n}.
\end{align*}
Applying the operator $\left.\frac{d^2}{dz^2}\right|_{z=1}$ on both sides while using \cite[Equation (2.1)]{andrews08}, we deduce after simplification
\begin{equation*}
\frac{1}{(q)_N}\sum_{n=1}^N\frac{(q)_{n-1}q^n}{(1-q^n)}=\frac{1}{(q)_N}\sum_{n=1}^N\frac{q^n}{(1-q^n)^2}+\sum_{n=1}^{N}\frac{(-1)^n(1+q^n)q^{\frac{n(3n+1)}{2}}(q)_{N}}{(1-q^n)^2(q)_{N-n}(q)_{N+n}}.
\end{equation*}
Now \eqref{gen_spt(n,N)}, \eqref{2drc2} and \eqref{5} imply
\begin{align*}
\frac{1}{(q)_N}\sum_{n=1}^N\frac{(q)_{n-1}q^n}{(1-q^n)}&=\sum_{n=1}^{\infty}\textup{spt}(n, N)q^n,\nonumber\\
\frac{1}{(q)_N}\sum_{n=1}^N\frac{q^n}{(1-q^n)^2}&=\frac{1}{2}\sum_{n=1}^{\infty}M_{2,N}(n)q^n.
\end{align*}
Hence it suffices to show that
\begin{equation*}
\sum_{n=1}^{N}\frac{(-1)^n(1+q^n)q^{\frac{n(3n+1)}{2}}(q)_{N}}{(1-q^n)^2(q)_{N-n}(q)_{N+n}}=-\frac{1}{2}\sum_{n=0}^{\infty}N_{2,N}(n)q^n.
\end{equation*}
From ,
\begin{align*}
&\frac{1}{2}\sum_{n=0}^{\infty}N_{2,N}(n)q^n\nonumber\\
&= \frac{d^2}{dz^2}\left[\frac{(1-z)}{2}\sum_{n=0}^{N}\left[\begin{matrix} N \\ n \end{matrix}\right]\frac{(q)_nq^{n^2}}{(z)_{n+1}(z^{-1}q)_n} \right]_{z=1}\nonumber\\
&= \frac{d}{dz}\left[\frac{(1-z)}{2}\frac{d}{dz}\sum_{n=0}^{N}\left[\begin{matrix} N \\ n \end{matrix}\right]\frac{(q)_nq^{n^2}}{(z)_{n+1}(z^{-1}q)_n}- \frac{1}{2}\sum_{n=0}^{N}\left[\begin{matrix} N \\ n \end{matrix}\right]\frac{(q)_nq^{n^2}}{(z)_{n+1}(z^{-1}q)_n}\right ]  _{z=1}\nonumber\\
&=\frac{d}{dz}\bigg[\frac{1}{2(1-z)(q)_N}+\frac{1}{2}\sum_{n=1}^{N}\left[\begin{matrix} N \\ n \end{matrix}\right]\frac{(-1)^n (q)_{n}q^{\frac{n(3n+1)}{2}}}{(q)_{n+N}}\left(\frac{(1-z)q^n}{(1-z q^n)^2}+ \frac{1-z}{(z-q^n)^2} \right)\nonumber\\
&\quad-\frac{1}{2}\sum_{n=0}^{N}\left[\begin{matrix} N \\ n \end{matrix}\right]\frac{(q)_n q^{n^2}}{(z)_{n+1}( z^{-1}q)_n} \bigg]_{z=1}\nonumber\\
&=\bigg\{\frac{1}{2}\sum_{n=1}^{N}\left[\begin{matrix} N \\ n \end{matrix}\right]\frac{(-1)^n (q)_{n}q^{\frac{n(3n+1)}{2}}}{(q)_{n+N}}\left(\frac{(2q^n-1-zq^n)q^n}{(1-z q^n)^3}+ \frac{z-2+q^n}{(z-q^n)^3} \right)\nonumber\\
&\quad-\frac{1}{2}\sum_{n=1}^{N}\left[\begin{matrix} N \\ n \end{matrix}\right]\frac{(-1)^n (q)_{n}q^{\frac{n(3n+1)}{2}}}{(q)_{n+N}}\left(\frac{q^n}{(1-z q^n)^2}+ \frac{1}{(z-q^n)^2} \right)\bigg\}_{z=1}\nonumber\\
&=\sum_{n=1}^{N}\frac{(-1)^{n+1}(1+q^n)q^{\frac{n(3n+1)}{2}}(q)_{N}}{(1-q^n)^2(q)_{N-n}(q)_{N+n}},
\end{align*}
where we used \eqref{frgfbil} in the third step. This completes the proof.
\qed\\

\begin{proof}[Theorem \textup{\ref{asyest}}][]
Let $F(q, N)$ denote the right-hand side of \eqref{gen_spt(n,N)}. It is clear that $F(q, N)$ has a pole of order $N+1$ at $q=1$. Moreover, if $\zeta_{i}$ is a primitive $i$th root of unity then $F(q, N)$ has a pole of order $\left[\frac{N}{i}\right]+1$ at $\zeta_{i}$. Thus $F(q, N)$ has the partial fraction decomposition
\begin{align}\label{pfd}
F(q, N)= \frac{A}{(1-q)^{N+1}} + \frac{B}{(1-q)^N} + \cdots,
\end{align}
where $A$ and $B$ are some constants. As we shall see, the main contribution will come from the first term. By binomial theorem,
\begin{align*}
\frac{1}{(1-q)^{N+1}} & = \sum_{n=0}^\infty \frac{(N+1)_n}{n!} q^n, \\
\frac{1}{(1-q)^N} & = \sum_{n=0}^\infty \frac{(N)_n}{n!} q^n,
\end{align*}
where $(N)_n:=N(N+1)\cdots(N+n-1)$ denotes the rising factorial.
Using these series expansions in \eqref{pfd} and then comparing the coefficient of $q^n$, we have
\begin{equation}\label{partial}
\textup{spt}(n,N) =  \frac{A\cdot (N+1)_n}{n!} + \frac{B\cdot (N)_n}{n!}  + \cdots. 
\end{equation} 
Note that as $n\to\infty$,
\begin{align}\label{asymp_1st_term}
 \frac{ (N+1)_n}{n!} =  \frac{ (N+n)!}{N! n!} & = \frac{(n+1)(n+2)\cdots(n+N)}{N!} \nonumber \\
 & = \frac{n^N}{N!} + O(n^{N-1}).
\end{align}
Similarly,
\begin{equation}\label{asymp_2nd_term}
\frac{ (N)_n}{n!} = \frac{n^{N-1}}{(N-1)!} + O(n^{N-2}).
\end{equation} 
Use \eqref{asymp_1st_term} and \eqref{asymp_2nd_term} in \eqref{partial} to obtain 
\begin{equation*}
\textup{spt}(n, N) =  A \frac{ n^N}{N!} + O(n^{N-1}). 
\end{equation*}
Now multiply $F(q, N)$ by $(1-q)^{N+1}$ and then take limit $q\rightarrow 1$ to deduce $A = \frac{1}{N!}$. This completes the proof of the theorem. 
\end{proof}

\section{A finite analogue of the Beck-Chern theorem and properties of $N_\textup{SC}(n, N)$}\label{nscsection}

We start this section with a corollary of Theorem \ref{fingenc} which motivates us to study finite analogues of an important class of partitions.
\begin{corollary}\label{nscc-1}
Let $N\in\mathbb{N}$. We have
\begin{equation}
\begin{aligned}
{} & \frac{1}{(q)_N}\sum_{n = 1}^{N} \left[\begin{matrix} N \\ n \end{matrix}\right]\frac{(-1)^{n - 1}nq^{n(n + 1)/2}}{1 + q^n} + 
\frac{1}{2(-q)_N}\sum_{k=1}^{N}\left[\begin{matrix} N \\ k \end{matrix} \right] \frac{q^{k(k+1)/2}}{(1-q^k)}
\left(\frac{(-q)_k}{(q)_k} - 1 \right) \\
& = \ \frac{1}{4(q)_N}\left\{1 - \frac{(q)_N}{(-q)_N}\right\} + \frac{1}{2(-q)_N}\sum_{n = 1}^{N}\frac{(-q)_n}{(q)_n}\frac{q^n}{1 - q^n}. \label{c=-1}
\end{aligned}
\end{equation}
\end{corollary}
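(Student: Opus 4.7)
The plan is to derive Corollary \ref{nscc-1} by specializing Theorem \ref{fingenc} at $c=-1$ and then rewriting the resulting Fine's function sum in the explicit form claimed. On the right-hand side of \eqref{finanalogmaineqn} the substitution is immediate: $c/(1-c)^2=-1/4$ and $(c)_{N+1}=(-1)_{N+1}=2(-q)_N$ reproduce the right-hand side of \eqref{c=-1}, and the first summand on the left also matches directly. The entire corollary therefore reduces to the identity
\begin{equation*}
\sum_{n=1}^{N}\left[\begin{matrix}N\\n\end{matrix}\right]\frac{q^{n(n+1)}}{(q)_n(1-q^n)}F(q^N,q^n;-q^n)=\frac{1}{2(-q)_N}\sum_{k=1}^{N}\left[\begin{matrix}N\\k\end{matrix}\right]\frac{q^{k(k+1)/2}}{1-q^k}\left(\frac{(-q)_k}{(q)_k}-1\right).
\end{equation*}

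To establish this I would first apply Fine's transformation \eqref{finetransform} with $a=q^N$, $b=q^n$, $t=-q^n$, giving $F(q^N,q^n;-q^n)=\frac{1-q^n}{1+q^n}F(-q^N,-q^n;q^n)$, and then expand $F(-q^N,-q^n;q^n)$ via the partial-fraction formula \eqref{fine16.3}. Since $(q^{n-N})_m$ annihilates the series beyond $m=N-n$, the expansion terminates; using \eqref{e1} to rewrite $(q^{-(N-n)})_m$ and combining the exponents of $q$, the resulting double sum carries the clean exponent $q^{m(m+1)/2+nm}$. The change of variables $k=n+m$ converts this exponent to $k(k+1)/2-n(n+1)/2$, and after interchanging the two summations one arrives at
\begin{equation*}
\frac{(q)_N}{(-q)_N}\sum_{k=1}^{N}\frac{q^{k(k+1)/2}}{(q)_k(q)_{N-k}(1-q^k)}\,V_k,\qquad V_k:=\sum_{n=1}^{k}\left[\begin{matrix}k\\n\end{matrix}\right]\frac{(-q)_{n-1}\,q^{n(n+1)/2}}{(q)_n}.
\end{equation*}

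The decisive step is the evaluation of $V_k$. Specializing the Corteel--Lovejoy identity \eqref{qchuvan} at $a=c=1$ yields $\sum_{n=0}^{k}\left[\begin{matrix}k\\n\end{matrix}\right]\frac{(-1)_n q^{n(n+1)/2}}{(q)_n}=\frac{(-q)_k}{(q)_k}$, and since $(-1)_0=1$ while $(-1)_n=2(-q)_{n-1}$ for $n\ge1$, isolating the $n=0$ term and halving produces $V_k=\frac{1}{2}\left(\frac{(-q)_k}{(q)_k}-1\right)$. Inserting this and absorbing the prefactor $(q)_N/\bigl((q)_k(q)_{N-k}\bigr)$ into $\left[\begin{matrix}N\\k\end{matrix}\right]$ recovers precisely the right-hand side of the displayed identity, which completes the proof.

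The principal technical obstacle is the exponent bookkeeping in the Fine-transform-and-expand step: the several $q^{(\cdot)m}$ factors coming from \eqref{fine16.3} and \eqref{e1} must collapse to $q^{m(m+1)/2+nm}$, and after the substitution $k=n+m$ this must split cleanly into an $n$-dependent piece $q^{-n(n+1)/2}$ times the $k$-dependent piece $q^{k(k+1)/2}$. Any sign or exponent slip here would prevent the recognition of the inner sum as the $a=c=1$ instance of \eqref{qchuvan}, on which the whole argument hinges.
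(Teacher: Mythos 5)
Your proposal is correct and follows essentially the same route as the paper: specialize Theorem \ref{fingenc} at $c=-1$, convert $F(q^N,q^n;-q^n)$ via \eqref{finetransform} and expand by \eqref{fine16.3}, shift to $k=n+m$ and interchange sums, and evaluate the inner sum as the $a=c=1$ case of \eqref{qchuvan} (your $V_k=\tfrac12\bigl(\tfrac{(-q)_k}{(q)_k}-1\bigr)$ is exactly the paper's inner-sum evaluation, since $(-1)_n=2(-q)_{n-1}$). The exponent bookkeeping you flag as the main risk does work out exactly as you describe.
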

\begin{proof}
By putting $c=-1$ in Theorem \ref{fingenc}, we clearly get the right hand side and the first term on the left hand side of equation \eqref{c=-1} above. Thus we need only show that
\begin{align}
\sum_{n = 1}^{N}\left[\begin{matrix} N \nonumber \\ n \end{matrix}\right] \frac{q^{n(n + 1)}}{(q)_n (1 - q^n)} F\left(q^N, q^n; -q^n\right)=\frac{1}{2(-q)_N}\sum_{k=1}^{N}\left[\begin{matrix} N \\ k \end{matrix} \right] \frac{q^{k(k+1)/2}}{(1-q^k)}\left(\frac{(-q)_k}{(q)_k} - 1 \right).
\end{align}
%The only term that requires some explaining is the one involving the Fine's function in \eqref{finanalogmaineqn} with $c=-1$, namely,
%\begin{equation}\label{finefunctionterm}
%\sum_{n=1}^{N}\left[\begin{matrix} N \\ n \end{matrix}\right] \frac{q^{n(n + 1)}}{(q)_n (1 - q^n)} F(q^N, q^n; -q^n).
%\end{equation}
Letting $a=q^{N}, b=-t=q^n$ in \eqref{finetransform} gives
\begin{equation*}
F(q^N, q^n; -q^n) = \frac{1 - q^n}{1 + q^n}F(-q^N, -q^n; q^n). 
\end{equation*}
Thus, employing the above equation in the first step below and then \eqref{fine16.3} in the second step, we see that
\begin{align*}
&\sum_{n = 1}^{N}\left[\begin{matrix} N \nonumber \\ n \end{matrix}\right] \frac{q^{n(n + 1)}}{(q)_n (1 - q^n)} F\left(q^N, q^n; -q^n\right)\nonumber\\
&=\sum_{n=1}^{N}\left[\begin{matrix} N \\ n \end{matrix}\right] \frac{q^{n(n + 1)}}{(q)_n (1 + q^n)} F(-q^N, -q^n; q^n)\nonumber\\
&= \sum_{n=1}^{N}\left[\begin{matrix} N \\ n \end{matrix}\right]\frac{q^{n(n+1)}}{(q)_n (1 + q^n)}
\frac{(-q^{N+1})_{\infty}}{(-q^{n+1})_{\infty}}\sum_{j=0}^{\infty}\frac{(q^{n-N})_j}{(q)_j}\frac{(-q^{N+1})^j}{1-q^{n + j}} \nonumber \\ 
{} &= \ \sum_{n=1}^{N}\left[\begin{matrix} N \\ n \end{matrix} \right]\frac{(-q)_n q^{n(n+1)}}{(-q)_N (q)_{n} (1 + q^n)}
\sum_{j=0}^{\infty}\frac{(q)_{N-n} q^{\frac{j(j+1)}{2} + nj}}{(q)_{N-n-j}(q)_j(1 - q^{n + j})}\nonumber\\
&= \sum_{n=1}^{N}\left[\begin{matrix} N \\ n \end{matrix} \right]\frac{(-q)_n q^{\frac{n(n+1)}{2}}}{(-q)_N (q)_{n} (1 + q^n)}
\sum_{j=0}^{\infty}\frac{(q)_{N-n} q^{(n+j)(n+j+1)/2}}{(q)_{N-n-j}(q)_j(1 - q^{n + j})}\label{2.18}
\nonumber\\
&=\sum_{n=1}^{N}\left[\begin{matrix} N \\ n \end{matrix} \right]\frac{(-q)_n q^{\frac{n(n+1)}{2}}}{(-q)_N (q)_{n} (1 + q^n)}
 \sum_{k=n}^{\infty} \frac{(q)_{N-n} q^{k(k+1)/2}}{(q)_{N-k}(q)_{k-n}(1 - q^{k})}\nonumber\\
& =\sum_{k=1}^{\infty} \frac{q^{k(k+1)/2}}{(q)_{N-k}(1-q^k)}\sum_{n=1}^{\min(k, N)} \left[\begin{matrix} N \\ n \end{matrix} \right] \frac{(-q)_n}{(-q)_N}\frac{q^{n(n+1)/2 }(q)_{N-n}}{(q)_{k-n}(q)_n (1 + q^n)} \nonumber \\
&= \frac{1}{2(-q)_N}\sum_{k=1}^{N}\left[\begin{matrix} N \\ k \end{matrix} \right] \frac{q^{k(k+1)/2}}{(1-q^k)}
\sum_{n=1}^{k}\left[\begin{matrix} k \\ n \end{matrix} \right] \frac{(-1)_n}{(q)_n}q^{n(n+1)/2} \nonumber \\
&= \frac{1}{2(-q)_N}\sum_{k=1}^{N}\left[\begin{matrix} N \\ k \end{matrix} \right] \frac{q^{k(k+1)/2}}{(1-q^k)}
\left(\frac{(-q)_k}{(q)_k} - 1 \right),
\end{align*}
%where in the last step, we used \eqref{e1} with $n$ replaced by $j$ and $N$ replaced by $N-n$.
where the evaluation of the inner sum in the last step follows from \eqref{qchuvan} with $a=c=1$. This completes the proof.
\end{proof}
As we shall see, \eqref{c=-1} leads us to study the finite analogues of what are called $S$ - partitions and self-conjugate $S$ - partitions defined by Andrews, Garvan and Liang in \cite[pp. 199--200]{agl13}.

Let $V$ denote the set of vector partitions, that is, $V=\mathcal{D}\times\mathcal{P}\times\mathcal{P}$, where $\mathcal{P}$ denotes the set of partitions and $\mathcal{D}$ denotes the set of partitions into distinct parts. For a positive integer $N$, let $S_N$ denote the following set of vector partitions:
\begin{equation*}
S_N:=\{\vec{\pi}=(\pi_1, \pi_2, \pi_3)\in V: 1\leq s(\pi_1)<\infty, \hspace{1mm}s(\pi_1)\leq\min(s(\pi_2), s(\pi_3)) \hspace{1mm} \text{and} \hspace{1mm} l(\pi_1), l(\pi_2), l(\pi_3) \leq N\}.
\end{equation*}
Let the vector partition $\vec{\pi}$ belonging to the set $S_N$ be called an $S_N$-partition.
Let $w_{\textup{SC}}(\vec{\pi})=(-1)^{\#(\pi_1)-1}$ and define the involution map $\imath: S_N\to S_N$ by
\begin{equation*}
\imath(\vec{\pi})=\imath(\pi_1, \pi_2, \pi_3)=\imath(\pi_1, \pi_3, \pi_2).
\end{equation*}
Define an $S_N$-partition $\vec{\pi}=(\pi_1, \pi_2, \pi_3)$ to be a self-conjugate $S_N$-partition if it is a fixed point of $\imath$, that is, if and only if $\pi_2=\pi_3$. Let $N_{\textup{SC}}(n, N)$ denote the number of self-conjugate $S_N$-partitions counted according to the weight $w_{\textup{SC}}$, that is,
\begin{equation}\label{nscofnN}
N_{\textup{SC}}(n, N)=\sum_{\vec{\pi}\in S_N, |\vec{\pi}|=n \atop \imath(\vec{\pi})=\vec{\pi}}w_{\textup{SC}}(\vec{\pi}).
\end{equation}
We first find the generating function for $N_{\textup{SC}}(n, N)$.
\begin{theorem}\label{nscN}
Let $N\in\mathbb{N}$. We have
\begin{equation}\label{GFNSCofnN}
 \sum_{n=1}^{N}\frac{q^n(q^{n+1})_{N-n}}{(q^{2n};q^2)_{N-n+1}}=\sum_{n=1}^{\infty}N_{\textup{SC}}(n, N)q^n.
\end{equation}
\end{theorem}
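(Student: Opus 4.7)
My plan is to compute the generating function directly from the combinatorial definition of $N_{\textup{SC}}(n,N)$ by factoring contributions according to the smallest part of $\pi_1$.

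First, observe that a self-conjugate $S_N$-partition is uniquely determined by the pair $(\pi_1,\pi_2)$ since the condition $\imath(\vec\pi)=\vec\pi$ forces $\pi_3=\pi_2$. Consequently $|\vec\pi|=|\pi_1|+2|\pi_2|$ and the weight $w_{\textup{SC}}(\vec\pi)=(-1)^{\#(\pi_1)-1}$ depends only on $\pi_1$. I would next stratify the sum in \eqref{nscofnN} according to the value $n:=s(\pi_1)$, which by the definition of $S_N$ runs through the integers $1,2,\dots,N$, and additionally forces $s(\pi_2)\ge n$ and $l(\pi_2)\le N$.

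For each fixed $n\in\{1,\dots,N\}$, I would separately enumerate the contributions of $\pi_1$ and $\pi_2$. The partition $\pi_1$ has distinct parts, contains $n$ as its smallest part, and all other parts lie in $\{n+1,n+2,\dots,N\}$. Since the weight $(-1)^{\#(\pi_1)-1}$ contributes a factor $+1$ for the forced smallest part $n$ and $-1$ for each additional distinct part chosen from $\{n+1,\dots,N\}$, the generating function for $\pi_1$ (with sign) is
\begin{equation*}
q^n \prod_{k=n+1}^{N}(1-q^k)\;=\;q^n\,(q^{n+1})_{N-n}.
\end{equation*}
Meanwhile $\pi_2$ is an unrestricted partition with all parts in $\{n,n+1,\dots,N\}$, and since each part is counted twice (once from $\pi_2$, once from $\pi_3=\pi_2$), its generating function in $q$ is
\begin{equation*}
\prod_{k=n}^{N}\frac{1}{1-q^{2k}}\;=\;\frac{1}{(q^{2n};q^2)_{N-n+1}}.
\end{equation*}

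Multiplying these two factors and summing over $n$ from $1$ to $N$ yields the left-hand side of \eqref{GFNSCofnN}, matching $\sum_{n\ge 1}N_{\textup{SC}}(n,N)q^n$ on the right. There is no real obstacle here; the only point requiring a little care is the sign bookkeeping, namely checking that the forced presence of the smallest part $n$ contributes $(-1)^{1-1}=+1$ rather than being folded into the product, so that the product $(q^{n+1})_{N-n}$ correctly encodes the remaining distinct-part choices with alternating signs. Once that is verified, the identity follows immediately from the factorization above.
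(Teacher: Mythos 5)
Your proof is correct and follows essentially the same argument as the paper: fix $n=s(\pi_1)$, note that $q^n(q^{n+1})_{N-n}$ generates the signed choices of $\pi_1$ and that $1/(q^{2n};q^2)_{N-n+1}$ generates the doubled partition $\pi_2=\pi_3$ with parts in $\{n,\dots,N\}$, then sum over $n$. The sign bookkeeping you flag is handled identically in the paper, so there is nothing further to add.
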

\begin{proof}
For a fixed $n, 1 \leq n \leq N$, the numerator generates partitions $\pi_1$ into \textit{distinct parts} with smallest part $s(\pi_1)=n$, largest part $l(\pi_1)\leq N$ and counted with weight $(-1)^{\#(\pi_1)-1}.$ The denominator generates partitions into parts lying in $\{2n, 2n+2,\dots, 2N\}$, or equivalently, two identical partitions $\pi_2$ and $\pi_3$ with parts in $\{n, n+1, \dots, N\}$. In other words, $\pi_2$ and $\pi_3$ satisfy
$s(\pi_1)=n \leq s(\pi_2)=s(\pi_3)$ and $l(\pi_2), l(\pi_3) \leq N.$ So, \eqref{GFNSCofnN} generates precisely those partitions of $S_N$ with $\pi_2 = \pi_3$ and counted with weight $(-1)^{\#(\pi_1)-1} = w_{\textup{SC}}(\vec{\pi}),$ where $\vec{\pi} = (\pi_1, \pi_2, \pi_3).$ This completes the proof.
\end{proof}
In the theorem below, we obtain another representation for the generating function of $N_{\textup{SC}}(n, N)$.
\begin{theorem}\label{NSCinduction}
\begin{equation}
 \sum_{n=1}^{N}\frac{q^n(q^{n+1})_{N-n}}{(q^{2n};q^2)_{N-n+1}} = \frac{1}{(q)_N}\sum_{n = 1}^{N} \left[\begin{matrix} N \\ n \end{matrix}\right]\frac{(-1)^{n - 1}nq^{n(n + 1)/2}}{1 + q^n}.
\end{equation}
\end{theorem}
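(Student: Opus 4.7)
\textit{Proof plan.} My plan is to prove the identity by induction on $N$, following the pattern of the first proof of Theorem~\ref{fin_spt_identity}. Write $L(q,N)$ for the left-hand side and $R(q,N)$ for the right-hand side of the claim. The base case $N=1$ is a direct check: both sides reduce to $q/(1-q^2)$.

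For the inductive step I would show that $L$ and $R$ satisfy the same first-order recurrence
\begin{equation*}
f(q,N+1) \;=\; \frac{f(q,N)}{1+q^{N+1}} \;+\; \frac{q^{N+1}}{1-q^{2N+2}}.
\end{equation*}
For $L$ this is a short calculation with $q$-Pochhammer symbols: for $1\leq n\leq N$ the ratio of the $n$-th summand of $L(q,N+1)$ to that of $L(q,N)$ equals $(1-q^{N+1})/(1-q^{2N+2})=1/(1+q^{N+1})$, while the isolated $n=N+1$ summand contributes exactly $q^{N+1}/(1-q^{2N+2})$.

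To handle $R$, I would isolate the $n=N+1$ term of $R(q,N+1)$ and, for $1\leq n\leq N$, simplify the coefficient
\begin{equation*}
\frac{1}{(q)_{N+1}}\left[\begin{matrix} N+1 \\ n \end{matrix}\right] \;-\; \frac{1}{(1+q^{N+1})(q)_N}\left[\begin{matrix} N \\ n \end{matrix}\right]
\end{equation*}
attached to $(-1)^{n-1}nq^{n(n+1)/2}/(1+q^n)$. Using the elementary identity $1-(1-q^{N+1-n})/(1+q^{N+1}) = q^{N+1-n}(1+q^n)/(1+q^{N+1})$, the factor $1+q^n$ cancels the denominator of the summand of $R$, and after collecting one arrives at
\begin{equation*}
R(q,N+1) - \frac{R(q,N)}{1+q^{N+1}} \;=\; \frac{q^{N+1}}{(1+q^{N+1})(q)_{N+1}}\sum_{n=1}^{N+1}\left[\begin{matrix} N+1 \\ n \end{matrix}\right](-1)^{n-1}n\,q^{n(n-1)/2},
\end{equation*}
where the previously isolated $n=N+1$ term has now been reabsorbed into the sum. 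Lemma~\ref{q_N id} evaluates the inner sum as $(q)_N$, collapsing the right-hand side to $q^{N+1}/(1-q^{2N+2})$ and closing the recurrence.

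The main (and essentially only) obstacle is the algebraic manipulation that produces the cancellation of $1+q^n$; once this identity is in place, Lemma~\ref{q_N id} does all the remaining work, and induction on $N$ yields $L(q,N)=R(q,N)$ for every $N\geq 1$.
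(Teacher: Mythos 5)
Your proposal is correct and follows essentially the same route as the paper: the paper also proves the identity by induction on $N$, showing that both sides satisfy the recurrence $f(q,N+1)=f(q,N)/(1+q^{N+1})+q^{N+1}/(1-q^{2N+2})$, with the cancellation of the factor $1+q^n$ and an appeal to Lemma~\ref{q_N id} carried out exactly as you describe. No gaps.
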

\begin{proof}
Let $C(q, N)$ and $D(q, N)$ respectively denote the left- and right-hand sides of the above equation. Observe that $C(q, 1) = D(q, 1) = \frac{q}{1-q^2}$. We show both $C(q, N)$ and $D(q, N)$ satisfy the same recurrence relation
\begin{equation}
f(q, N+1)=\frac{f(q, N)}{1+q^{N+1}}+\frac{q^{N+1}}{1-q^{2N+2}},
\end{equation}
whence we will be done. To that end, note that
% \begin{equation}
 \begin{align*}
C(q, N+1)  &= \ \sum_{n=1}^{N}\left\{\frac{q^n (q^{n+1})_{N-n}}{(q^{2n};q^2)_{N-n+1}}\cdot\frac{1-q^{N+1}}{1-q^{2N+2}} \right\}+ 
\frac{q^{N+1}}{1-q^{2N+2}}\\
& = \ \frac{C(q, N)}{1+q^{N+1}} + \frac{q^{N+1}}{1-q^{2N+2}}.
\end{align*}
% \end{equation}
Next, separating the $(N+1)$-th term of $D(q, N+1)$ in the first step below, we see that
\begin{align*}
&D(q, N+1) - \frac{D(q, N)}{1+q^{N+1}}\nonumber\\
&= \sum_{n=1}^{N}\frac{(-1)^{n-1}nq^{n(n+1)/2}}{(q)_n(1+q^n)(q)_{N-n+1}} + \frac{(-1)^{N}(N+1)q^{(N+1)(N+2)/2}}{(1+q^{N+1})(q)_{N+1}} - \frac{1}{1+q^{N+1}}\sum_{n=1}^{N}\frac{(-1)^{n-1}nq^{n(n+1)/2}}{(q)_n(1+q^n)(q)_{N-n}} \\
%& = \ \sum_{n=1}^{N}\frac{(-1)^{n-1}nq^{n(n+1)/2}}{(q)_n(1+q^n)(q)_{N-n}}\left\{\frac{1}{1-q^{N-n+1}} - \frac{1}{1+q^{N+1}}\right\} + \frac{(-1)^{N}(N+1)q^{(N+1)(N+2)/2}}{(1+q^{N+1})(q)_{N+1}}\nonumber\\
& = \ \frac{q^{N+1}}{1 + q^{N+1}}\sum_{n=1}^{N}\frac{(-1)^{n-1}nq^{n(n-1)/2}}{(q)_{n}(q)_{N-n+1}} + \frac{(-1)^{N}(N+1)q^{(N+1)(N+2)/2}}{(1+q^{N+1})(q)_{N+1}}\nonumber\\
&= \ \frac{q^{N+1}}{(1+q^{N+1})(q)_{N+1}}\sum_{n=1}^{N+1}
\left[\begin{matrix} N+1 \\ n \end{matrix}\right](-1)^{n-1}nq^{n(n-1)/2} \\
& = \ \frac{q^{N+1}}{1-q^{2N+2}},
\end{align*}
%  \end{equation*}
by Lemma \ref{q_N id}. Thus, $C(q, N)=D(q, N)$ for all positive integers $N$.
\end{proof}
We now state Corollary \ref{nscc-1} in the form that will be used in the sequel. This is a finite analogue of Corollary 2.12 of \cite{dixitmaji18}.
\begin{corollary}\label{c=-1finalcor}
Let $N\in\mathbb{N}$. Then
\begin{equation*}
\begin{aligned}\label{c=-1final}
{} & (q)_N\sum_{n = 1}^{\infty}N_{\textup{SC}}(n, N)q^n + 
\frac{(q)_N}{2(-q)_N}\sum_{k=1}^{N}\left[\begin{matrix} N \\ k \end{matrix} \right] \frac{q^{k(k+1)/2}}{(1-q^k)}
\left(\frac{(-q)_k}{(q)_k} - 1 \right) \\
{} & = \ \frac{1}{4}\left\{1 - \frac{(q)_N}{(-q)_N}\right\} + \frac{(q)_N}{2(-q)_N}\sum_{n = 1}^{N}\frac{(-q)_n}{(q)_n}\frac{q^n}{1 - q^n}. 
\end{aligned}
\end{equation*}
\end{corollary}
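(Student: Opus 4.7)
The corollary is essentially a cosmetic reformulation of Corollary \ref{nscc-1} once the generating function identity for $N_{\textup{SC}}(n,N)$ has been established. My plan is therefore very short: combine the three results \eqref{c=-1}, \eqref{GFNSCofnN}, and Theorem \ref{NSCinduction}.

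First I would take equation \eqref{c=-1} from Corollary \ref{nscc-1} and multiply both sides by $(q)_N$. This immediately puts the second term on the left and the whole right-hand side into exactly the form appearing in the statement of Corollary \ref{c=-1finalcor}. What remains is to recognize the first term on the left, namely
\[
\sum_{n=1}^{N}\left[\begin{matrix} N \\ n \end{matrix}\right]\frac{(-1)^{n-1}n q^{n(n+1)/2}}{1+q^n},
\]
as $(q)_N \sum_{n=1}^{\infty} N_{\textup{SC}}(n,N) q^n$.

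Second, to make that identification I would invoke Theorem \ref{NSCinduction}, which states
\[
\sum_{n=1}^{N}\frac{q^n (q^{n+1})_{N-n}}{(q^{2n};q^2)_{N-n+1}} \;=\; \frac{1}{(q)_N}\sum_{n=1}^{N}\left[\begin{matrix} N \\ n \end{matrix}\right]\frac{(-1)^{n-1}n q^{n(n+1)/2}}{1+q^n},
\]
followed by Theorem \ref{nscN}, which identifies the left-hand side of the above with the generating function $\sum_{n=1}^{\infty} N_{\textup{SC}}(n,N)q^n$. Chaining these gives
\[
\frac{1}{(q)_N}\sum_{n=1}^{N}\left[\begin{matrix} N \\ n \end{matrix}\right]\frac{(-1)^{n-1}n q^{n(n+1)/2}}{1+q^n} = \sum_{n=1}^{\infty} N_{\textup{SC}}(n,N) q^n,
\]
and substituting back produces precisely the stated corollary.

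There is essentially no obstacle here: the real work was done in establishing Corollary \ref{nscc-1} (via the $c=-1$ specialization of Theorem \ref{fingenc}) and in proving the combinatorial generating function identity of Theorems \ref{nscN} and \ref{NSCinduction}. So the proof amounts to one line of substitution, and can be presented in a couple of sentences without any further computation.
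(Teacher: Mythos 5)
Your proposal is correct and is exactly the paper's own argument: the paper proves this corollary in one line by multiplying Corollary \ref{nscc-1} by $(q)_N$ and invoking Theorems \ref{nscN} and \ref{NSCinduction}. Nothing further is needed.
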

\begin{proof}
Multiply both sides of Corollary \ref{nscc-1} by $(q)_N$ and employ Theorems \ref{nscN} and \ref{NSCinduction}.
\end{proof}
We next prove Theorem \ref{chernbeck}. To do this, however, we first require two lemmas. The first one below gives the closed-form evaluation of a special case of Fine's function.
\begin{lemma}\label{Fineevaluation}
Let $N\in\mathbb{N}$. Let $F(a, b; t)$ be defined in \eqref{finefunction}. Then
\begin{equation*}
F\left(\frac{-1}{q}, -q^N; q\right) = \frac{1+q^N}{1-q^N}[(-1)_N - 1]. 
\end{equation*}
\end{lemma}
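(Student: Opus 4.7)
My plan is to evaluate $F(-1/q,-q^N;q)$ by invoking Fine's transformation \eqref{finetransform}
with $a=-1/q$, $b=-q^N$, $t=q$, which gives
\[
F(-1/q,-q^N;q) \;=\; \frac{1+q^N}{1-q}\,F\!\left(q^{-N},\,q;\,-q^N\right).
\]
The point of doing this is that the transformed function is essentially a \emph{terminating} series: by definition
\[
F(q^{-N},q;-q^N) \;=\; \sum_{n=0}^{\infty}\frac{(q^{1-N})_n}{(q^2)_n}(-q^N)^n,
\]
and since $(q^{1-N})_n=0$ for $n\geq N$, the sum runs only up to $n=N-1$.

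Next I would simplify the finite sum. Applying \eqref{e1} (with $N$ replaced by $N-1$) gives
\[
(q^{1-N})_n \;=\; (-1)^n q^{-(N-1)n+n(n-1)/2}(q^{N-n})_n,
\]
and the powers of $q$ combine with $(-q^N)^n$ to produce a clean $q^{n(n+1)/2}$. Using $(q^{N-n})_n=(q)_{N-1}/(q)_{N-n-1}$ and $(q^2)_n=(q)_{n+1}/(1-q)$ and collecting, the sum telescopes into
\[
F(q^{-N},q;-q^N) \;=\; \frac{1-q}{1-q^N}\sum_{m=1}^{N}\left[\begin{matrix} N\\ m\end{matrix}\right] q^{m(m-1)/2},
\]
after the reindexing $m=n+1$.

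Finally, to recognize the remaining sum in closed form I would use \eqref{binomial0} at $z=-1$:
\[
(-1)_N \;=\; \sum_{m=0}^{N}\left[\begin{matrix} N\\ m\end{matrix}\right] q^{m(m-1)/2},
\]
so that $\sum_{m=1}^{N}\left[\begin{matrix} N\\ m\end{matrix}\right] q^{m(m-1)/2}=(-1)_N-1$. Substituting back yields
\[
F(-1/q,-q^N;q) \;=\; \frac{1+q^N}{1-q}\cdot\frac{1-q}{1-q^N}\bigl[(-1)_N-1\bigr] \;=\; \frac{1+q^N}{1-q^N}\bigl[(-1)_N-1\bigr],
\]
as required. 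The only mildly delicate step is the bookkeeping of $q$-powers after invoking \eqref{e1}; everything else is a routine application of standard identities that are already recorded in Section~\ref{prelim}.
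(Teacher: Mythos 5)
Your proposal is correct and follows essentially the same route as the paper's own proof: apply the transformation \eqref{finetransform} to reduce to the terminating series $\frac{1+q^N}{1-q}\sum_{m\geq 0}\frac{(q^{1-N})_m}{(q^2)_m}(-q^N)^m$, simplify via \eqref{e1}, and recognize the resulting $q$-binomial sum through \eqref{binomial0} at $z=-1$. The bookkeeping of $q$-powers you flag as delicate does indeed work out exactly as you describe.
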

\begin{proof}
Using \eqref{finetransform} in the first step below, and \eqref{e1} with $n$ and $N$ respectively replaced by $m$ and $N-1$ in the second step and simplifying, we see that
{\allowdisplaybreaks\begin{align*}%\label{finetransform}
 F\left(\frac{-1}{q}, -q^N; q\right)&=\frac{1+q^N}{1-q}\sum_{m=0}^{\infty}\frac{(q^{-(N-1)})_m}{(q^2)_m}(-q^N)^m\nonumber\\
%&= \frac{1+q^N}{1-q}\sum_{m=0}^{\infty}\frac{(q)_{N-1}}{(q)_{N-1-m}}\frac{(-1)^m q^{m(m-1)/2 - (N-1)m}}{(q^2)_m} (-q^N)^m \\
 {} &= \frac{1+ q^N}{1- q^N}\sum_{m=0}^{\infty}\frac{(q)_{N}}{(q)_{N-1-m}}\frac{q^{m(m+1)/2}}{(q)_{m+1}}\\
 {} &= \frac{1+ q^N}{1- q^N}\sum_{m=1}^{N} \left[ \begin{matrix} N \\ m \end{matrix} \right]q^{m(m-1)/2}\\
 {} &= \frac{1+ q^N}{1- q^N}[(-1)_N - 1],
\end{align*}}
where we used \eqref{binomial0} with $z=-1$ in the last step.
\end{proof}
Next we state another lemma of which two proofs are given. The first proof led us to the right-hand side starting from the left. Once the identity was known, we obtained a shorter proof by induction. We give both since the first one also gives a new identity along the way (see \eqref{atw} below).
\begin{lemma}\label{sumexpliciteval}
  Let $m, N\in\mathbb{N}$. Then
  \begin{equation*}\label{sumexplicitevaleqn}
   \sum_{n=0}^{m-1} \frac{(-1)_{n} q^{n}}{(-q^N)_{n+1}} = \frac{1}{1-q^N}\left\{\frac{(-1)_m}{(-q^N)_m} - 1\right\}.
  \end{equation*}
\end{lemma}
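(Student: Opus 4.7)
The plan is to prove the identity by telescoping. Guided by the form of the proposed closed expression, I would look for a sequence $a_n$ with $a_{n+1}-a_n$ equal to the summand $\frac{(-1)_n q^n}{(-q^N)_{n+1}}$, and the natural candidate suggested by the right-hand side is
$$a_n := \frac{1}{1-q^N}\cdot\frac{(-1)_n}{(-q^N)_n}.$$

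Next I would verify this by a direct computation using the two elementary ratios $(-1)_{n+1}=(1+q^n)(-1)_n$ and $(-q^N)_{n+1}=(1+q^{N+n})(-q^N)_n$. These give
$$a_{n+1}-a_n=\frac{(-1)_n}{(1-q^N)(-q^N)_{n+1}}\bigl[(1+q^n)-(1+q^{N+n})\bigr]=\frac{(-1)_n\,q^n}{(-q^N)_{n+1}},$$
as required. Telescoping $\sum_{n=0}^{m-1}(a_{n+1}-a_n)=a_m-a_0$ and evaluating $a_0=\tfrac{1}{1-q^N}$ and $a_m=\tfrac{1}{1-q^N}\cdot\tfrac{(-1)_m}{(-q^N)_m}$ yields exactly the right-hand side of the lemma.

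A second route, and probably the shorter of the two proofs the authors mention, is induction on $m$. The base case $m=1$ reduces after clearing denominators to the single equality $2-(1+q^N)=1-q^N$. For the inductive step one subtracts the statement at level $m$ from the statement at level $m+1$; after using the same ratio identities for $(-1)_m$ and $(-q^N)_{m+1}$, the difference collapses to $\frac{(-1)_m q^m}{(-q^N)_{m+1}}$, which is the new term on the left.

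There is no genuine obstacle here: once one spots that $\frac{(-1)_n}{(-q^N)_n}$ is the correct antiderivative (which is essentially forced by the shape of the right-hand side), the entire argument is a one-line application of the two factorial recurrences. I would present the telescoping proof because it also explains the origin of the prefactor $\frac{1}{1-q^N}$ in the closed form, and it is this approach that naturally produces the auxiliary identity \eqref{atw} that the authors allude to.
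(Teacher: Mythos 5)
Your telescoping argument is correct and is essentially identical to the paper's second proof, which proceeds by induction on $m$ and verifies precisely the same difference identity $Q(q,m+1)-Q(q,m)=\frac{(-1)_m q^m}{(-q^N)_{m+1}}$ using the two factorial recurrences you cite. One small correction: the auxiliary identity \eqref{atw} does \emph{not} arise from the telescoping route; it is a byproduct of the paper's first, much longer proof, which expands the summand via \eqref{qchuvan}, applies the Fu--Lascoux identity and the $q$-binomial theorem, and extracts \eqref{atw} by letting $N\to\infty$ --- so presenting only the telescoping proof would lose that identity rather than produce it.
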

\begin{remark}
The limiting case $N\to\infty$ of the above result is well-known. To see this, first equate the two expressions for the generating function of the number of partitions of a number into distinct parts with largest part $\leq m-1$, that is,
\begin{equation}\label{dis}
(-q)_{m-1}=1+\sum_{n=1}^{m-1}(-q)_{n-1}q^n,
\end{equation}
where the expression on the right-hand side is easily obtained by fixing $n$ to be the largest part in a partition in the aforementioned collection. Then multiplying both sides of \eqref{dis} by $2$ and subtracting $1$ from both sides leads to
\begin{align}\label{tbu}
(-1)_{m}-1=1+\sum_{n=1}^{m-1}(-1)_nq^n=\sum_{n=0}^{m-1}(-1)_nq^n.
\end{align}
\end{remark}
\noindent
\textbf{First proof of Lemma \ref{sumexpliciteval}:} 
Use \eqref{qchuvan} with $a=q^{-N}, c=-q^N$ in the second step below to see that
\begin{align}\label{ae1}
\sum_{n=0}^{m-1} \frac{(-1)_{n} q^{n}}{(-q^N)_{n+1}}&=\sum_{n=0}^{m-1} \frac{(-1)_{n} q^{n}}{(q)_n}\frac{(q)_n}{(-q^N)_{n+1}}\nonumber\\
&=\sum_{n=0}^{m-1}\frac{(-1)_{n} q^{n}}{(q)_{n}}\sum_{k=0}^{n}(-1)^{k}\left[\begin{matrix} n\\k\end{matrix}\right]\frac{q^{k(k+1)/2}}{1+q^{N+k}}\nonumber\\
&=\sum_{k=0}^{m-1}\frac{(-1)^kq^{k(k+1)/2}}{(q)_k(1+q^{N+k})}\sum_{n=k}^{m-1}\frac{(-1)_n}{(q)_{n-k}}q^n\nonumber\\
&=\sum_{k=0}^{m-1}\frac{(-1)^kq^{k(k+3)/2}(-1)_k}{(q)_k(1+q^{N+k})}\sum_{n=0}^{m-1-k}\frac{(-q^k)_n}{(q)_{n}}q^n\nonumber\\
&=\sum_{k=0}^{m-1}\frac{(-1)^kq^{k(k+3)/2}(-1)_k}{(q)_k(1+q^{N+k})}\frac{(-q^{k})_{m-k}}{(q)_{m-1-k}(1+q^k)},
\end{align}
where in the last step we applied an identity due to Fu and Lascoux \cite[Equation (1.2)]{fulascoux}, namely,
\begin{equation*}
\sum_{j=0}^{m}\frac{(z)_j}{(q)_j}(-xq)^{j}=\frac{(z)_{m+1}}{(q)_{m}}\sum_{j=0}^{m}\left[\begin{matrix} m\\j\end{matrix}\right]\frac{(-xq)^{j}(-1/x)_{j}}{(1-zq^j)}
\end{equation*}
with $z=-q^{k}$ and $x=-1$ and $m$ replaced by $m-1-k$. Thus from \eqref{ae1} and using the elementary identity $\sum_{j=1}^{\infty}\frac{1-z^j}{1-z}x^j=\frac{x}{(1-zx)(1-x)}, |x|<1,$ in the third step below, we have
\begin{align}
&\sum_{n=0}^{m-1} \frac{(-1)_{n} q^{n}}{(-q^N)_{n+1}}\nonumber\\
&=\frac{(-1)_m}{(q)_{m-1}}\sum_{k=0}^{m-1}\left[\begin{matrix} m-1\\k\end{matrix}\right](-1)^kq^{k(k+1)/2}\frac{q^k}{(1+q^{N+k})(1+q^k)}\nonumber\\
&=\frac{(-q)_{m-1}}{(q)_{m-1}(1+q^{N})}-\frac{(-1)_m}{(q)_{m-1}}\sum_{k=1}^{m-1}\left[\begin{matrix} m-1\\k\end{matrix}\right](-1)^kq^{k(k+1)/2}\sum_{j=1}^{\infty}\frac{1-q^{Nj}}{1-q^{N}}(-q^k)^j\nonumber\\
&=\frac{(-q)_{m-1}}{(q)_{m-1}(1+q^{N})}+\frac{(-1)_m}{(q)_{m-1}(1-q^{N})}\sum_{j=1}^{\infty}(-1)^{j-1}(1-q^{Nj})\sum_{k=1}^{m-1}\left[\begin{matrix} m-1\\k\end{matrix}\right](-1)^kq^{k(j+1)+\frac{k(k-1)}{2}}\nonumber\\
&=\frac{(-q)_{m-1}}{(q)_{m-1}(1+q^{N})}+\frac{(-1)_m}{(q)_{m-1}(1-q^{N})}\sum_{j=1}^{\infty}(-1)^{j-1}(1-q^{Nj})\left[(q^{j+1})_{m-1}-1\right],
\end{align}
where we used \eqref{binomial0} in the last step. By an application of \eqref{q-binomial thm} in the second step below,
\begin{align}\label{atw0}
&\sum_{n=0}^{m-1} \frac{(-1)_{n} q^{n}}{(-q^N)_{n+1}}\nonumber\\
&=\frac{(-q)_{m-1}}{(q)_{m-1}(1+q^{N})}-\frac{(-1)_m}{(q)_{m-1}(1-q^{N})}\bigg\{\sum_{j=1}^{\infty}(-1)^j\frac{(q)_{m+j-1}-(q)_j}{(q)_j}\nonumber\\
&\qquad\qquad\qquad\qquad\qquad\qquad-(q)_{m-1}\sum_{j=1}^{\infty}\frac{(q^m)_j}{(q)_j}(-q^{N})^{j}+\sum_{j=1}^{\infty}(-q^{N})^{j}\bigg\}\nonumber\\
&=\frac{(-q)_{m-1}}{(q)_{m-1}(1+q^{N})}-\frac{(-1)_m}{(q)_{m-1}(1-q^{N})}\bigg\{\sum_{j=1}^{\infty}(-1)^j\frac{(q)_{m+j-1}-(q)_j}{(q)_j}\nonumber\\
&\qquad\qquad\qquad\qquad\qquad\qquad-(q)_{m-1}\left(\frac{(-q^{N+m})_{\infty}}{(-q^{N})_{\infty}}-1\right)-\frac{q^{N}}{1+q^{N}}\bigg\}.
\end{align}
Letting $N\to\infty$, employing \eqref{tbu} and simplifying results in
\begin{equation}\label{atw}
\sum_{j=1}^{\infty}(-1)^j\frac{(q)_{m+j-1}-(q)_j}{(q)_j}=\frac{1}{2}+\frac{(q)_{m-1}}{(-1)_m}-(q)_{m-1},
\end{equation}
which is an interesting result that we get along the way. Now substitute \eqref{atw} in \eqref{atw0} and simplify to finally obtain \eqref{sumexplicitevaleqn}.
\qed\\

\noindent
\textbf{Second proof of Lemma \ref{sumexpliciteval}:} Let $P(q, m)$ and $Q(q, m)$ respectively denote the left- and right-hand sides of \eqref{sumexplicitevaleqn}. Note that $P(q, 1)=Q(q, 1)=1/(1+q^{N})$. It is readily seen that 
\begin{equation*}
 P(q, m+1)-P(q, m) = \frac{(-1)_m q^m}{(-q^N)_{m+1}}.
\end{equation*}
Now
\begin{align*}
 {} Q(q, m+1) - Q(q, m) &= \frac{1}{1-q^N}\left\{\frac{(-1)_{m+1}}{(-q^N)_{m+1}} - \frac{(-1)_{m}}{(-q^N)_{m}}\right\}\\
 {} &= \frac{1}{(1-q^N)}\frac{(-1)_{m}}{(-q^N)_{m+1}}\left\{ (1+q^m) - (1+q^{N+m})\right\} \\
{} &= \frac{(-1)_m q^m}{(-q^N)_{m+1}}
= P(q, m+1)-P(q, m).
\end{align*}
By induction on $m$, the proof of the lemma is complete.
\qed

\noindent
Armed with the above two lemmas, we now give two proofs of our finite analogue of the Beck-Chern theorem.

\noindent
\textbf{First proof of Theorem \ref{chernbeck}.}
Let $a(n, N)$ and $\textup{ssptd}_{o}(n, N)$ be as defined in the hypotheses of the theorem. Then by the definition of $a(n, N)$,
\begin{align}\label{anNgen}
\sum_{n=1}^{\infty} a(n, N)q^n=\sum_{n=1}^{N} \frac{q^n (-q)_{n-1}}{1-q^n}.
\end{align}
Also, if  $\mathfrak{D}(n, N)$ denotes the set of partitions $\pi$ of $n$ into distinct parts with $l(\pi) - s(\pi)\leq N-1$, then
\begin{align}
&\sum_{n=1}^{\infty} \textup{ssptd}_{o}(n, N)q^n\nonumber\\
 &= \frac{1}{2}\left(\sum_{n=1}^{\infty}\sum_{\pi \in \mathfrak{D}(n, N)} s(\pi)q^n -\sum_{n=1}^{\infty}\sum_{\pi \in \mathfrak{D}(n, N)} (-1)^{\#(\pi)}s(\pi)q^n \right)\nonumber\\
&=\frac{1}{2}\left(\sum_{n=1}^{\infty} nq^n (-q^{n+1})_{N-1} + \sum_{n=1}^{\infty} nq^n (q^{n+1})_{N-1}\right)\label{ssptdogf}\\
&=\frac{1}{2}\left((-q)_{N-1}\sum_{n=1}^{\infty} \frac{nq^n (-q^N)_{n}}{(-q)_n}+(q)_{N-1}\sum_{n=1}^{\infty} \frac{nq^n (q^N)_{n}}{(q)_n}\right)\nonumber\\
&=\frac{1}{2}\left[(-q)_{N-1}\left.\left\{z \frac{\partial}{\partial z}\sum_{n=1}^{\infty}\frac{z^n (-q^N)_{n}}{(-q)_n}\right\}\right|_{z=q}+(q)_{N-1}\left.\left\{z \frac{\partial}{\partial z}\sum_{n=1}^{\infty}\frac{z^n (q^N)_{n}}{(q)_n}\right\}\right|_{z=q}\right]\nonumber\\
&=: \frac{1}{2}\left[(-q)_{N-1}. \left.G(z, q, N)\right|_{z=q} + (q)_{N-1}. \left.H(z, q, N)\right|_{z=q}\right].\label{ssptdo}
\end{align}
By the $q$-binomial theorem \eqref{q-binomial thm},
\begin{align*}
H(z, q, N)=z \frac{\partial}{\partial z}\frac{(q^{N}z)_{\infty}}{(z)_{\infty}}=z \frac{\partial}{\partial z}\frac{1}{(z)_N} = \frac{z}{(z)_N}\sum_{r=0}^{N-1}\frac{q^r}{1-zq^r}
\end{align*}
so that
\begin{align}\label{ssptdohzqn}
(1-q^{N})(q)_{N-1}\cdot\left.H(z, q, N)\right|_{z=q}=\sum_{r=1}^{N}\frac{q^r}{1-q^r}.
\end{align}
Next, invoking \eqref{heine}, we observe that
\begin{align}\label{gzqnpri}
G(z, q, N)&= z \frac{\partial}{\partial z} {}_{2}\phi_{1}\left[ \begin{matrix} -q^N, & q \\ & -q \end{matrix} \, ; q, z  \right]\nonumber\\
&=z \frac{\partial}{\partial z} \frac{(q)_{\infty}(-q^N z)_{\infty}}{(-q)_{\infty}(z)_{\infty}} {}_{2}\phi_{1}\left[ \begin{matrix} -1, & z \\ & -q^{N}z \end{matrix} \, ; q, q  \right]\nonumber\\
&=\frac{(q)_{\infty}}{(-q)_{\infty}} z \frac{\partial}{\partial z}\sum_{n=0}^{\infty} \frac{(-1)_n (-zq^{N+n})_{\infty}}{(zq^n)_{\infty} (q)_n}q^n \nonumber\\
&= \frac{(q)_{\infty}}{(-q)_{\infty}} \sum_{n=0}^{\infty}\frac{(-1)_nq^n}{(q)_n}\frac{(-zq^{N+n})_{\infty}}{(zq^n)_{\infty}} \sum_{k=0}^{\infty} \left\{\frac{zq^{N+n+k}}{1+zq^{N+n+k}}+\frac{zq^{n+k}}{1-zq^{n+k}}\right\}.
\end{align}
Letting $n+k=m$ in the last expression in \eqref{gzqnpri} and employing Lemma \ref{sumexpliciteval} in the second step, we have
\begin{align}\label{4sum}
(1-q^{N})(-q)_{N-1}\cdot\left.G(z, q, N)\right|_{z=q}&=(1-q^{N})\sum_{m=1}^{\infty}\left\{\frac{q^{N+m}}{1+q^{N+m}}+\frac{q^{m}}{1-q^{m}}\right\}\sum_{n=0}^{m-1}\frac{(-1)_n}{(-q^{N})_{n+1}}q^n\nonumber\\
&=\sum_{m=1}^{\infty}\left\{\frac{q^{N+m}}{1+q^{N+m}}+\frac{q^{m}}{1-q^{m}}\right\}\left(\frac{(-1)_m}{(-q^{N})_m}-1\right)\nonumber\\
&=\sum_{m=1}^{\infty}\frac{q^{N+m}}{1+q^{N+m}}\frac{(-1)_m}{(-q^{N})_m}- \sum_{m=1}^{\infty}\frac{q^{N+m}}{1+q^{N+m}}\nonumber\\
&\quad+ \sum_{m=1}^{\infty} \frac{q^{m}}{1-q^{m}}\frac{(-1)_m}{(-q^N)_m}- \sum_{m=1}^{\infty}\frac{q^{m}}{1-q^{m}}.
\end{align}
Recalling the Fine's function defined in \eqref{finefunction}, the first sum in the above equation can be evaluated as follows:
\begin{align}\label{B4firstpart}
 \sum_{m=1}^{\infty} \frac{q^{N+m}}{1+q^{N+m}}\frac{(-1)_m}{(-q^N)_m}& = \frac{q^N}{1+q^N}\left(F\left(\frac{-1}{q}, -q^N; q\right) - 1\right)\nonumber\\
%&=\frac{q^N}{1-q^N}[(-1)_N - 1] - \frac{q^N}{1+q^N}\nonumber\\
&= \frac{(-1)_Nq^N}{1-q^N} - \frac{2q^N}{1-q^{2N}},
\end{align}
where in the second step, we invoked Lemma \ref{Fineevaluation}. Moreover, the second and the fourth sums in \eqref{4sum} combine together to give
\begin{align}\label{B4secondthird}
- \sum_{m=1}^{\infty}\frac{q^{N+m}}{1+q^{N+m}}- \sum_{m=1}^{\infty}\frac{q^{m}}{1-q^{m}}=-\sum_{m=1}^{N}\frac{q^m}{1-q^m} - \sum_{m=N+1}^{\infty}\frac{2q^m}{1-q^{2m}}.
\end{align}
Thus from \eqref{4sum}, \eqref{B4firstpart} and \eqref{B4secondthird},
\begin{align}\label{4sum1}
&(1-q^{N})(-q)_{N-1}\cdot\left.G(z, q, N)\right|_{z=q}\nonumber\\
&= \frac{q^N}{1-q^N}(-1)_N - \sum_{m=N}^{\infty}\frac{2q^m}{1-q^{2m}} + \sum_{m=1}^{\infty}\frac{q^{m}}{1-q^{m}}\frac{(-1)_m}{(-q^N)_m}-\sum_{m=1}^{N}\frac{q^m}{1-q^m}.
\end{align}
Hence from \eqref{ssptdo}, \eqref{ssptdohzqn} and \eqref{4sum1},
\begin{align*}
(1-q^{N})\sum_{n=1}^{\infty} \textup{ssptd}_{o}(n, N)q^n= \frac{1}{2}\left\{\sum_{m=1}^{\infty}\frac{q^{m}}{1-q^{m}}\frac{(-1)_m}{(-q^N)_m} - \sum_{m=N}^{\infty}\frac{2q^m}{1-q^{2m}}+ \frac{q^N}{1-q^N}(-1)_N\right\}.
\end{align*}
Note that $(-1)_N q^N/(1-q^N)$ is the $N$th term in 
\begin{equation*}\label{ssptdo2}
 \sum_{m=1}^{N} \frac{q^m}{1-q^m}(-1)_m = 2\sum_{n=1}^{\infty} a(n, N)q^n.
\end{equation*}
Thus we will be done provided we show 
\begin{equation}\label{ssptdaltproof}
\sum_{m=1}^{\infty}\frac{q^{m}}{1-q^{m}}\frac{(-1)_m}{(-q^N)_m} - \sum_{m=N}^{\infty}\frac{2q^m}{1-q^{2m}} = \sum_{m=1}^{N-1} \frac{q^m}{1-q^m}(-1)_m,
\end{equation}
for, then
\begin{equation*}
(1-q^{N})\sum_{n=1}^{\infty} \textup{ssptd}_{o}(n, N)q^n=\sum_{n=1}^{\infty} a(n, N)q^n,
\end{equation*}
which implies \eqref{bcgen}. We now prove \eqref{ssptdaltproof} by induction on $N$. 

Let $R(q, N)$ and $S(q, N)$ respectively denote the left- and right-hand sides of \eqref{ssptdaltproof}. Note that $R(q, 1)=S(q, 1)=0$. Suppose $R(q, N)=S(q, N)$ for some positive integer $N$. Now
{\allowdisplaybreaks\begin{align*}
{} R(q, N+1) &= \sum_{m=1}^{\infty}\frac{q^{m}}{1-q^{m}}\frac{(-1)_m}{(-q^{N+1})_m} - \sum_{m=N+1}^{\infty}\frac{2q^m}{1-q^{2m}}\\
{} &= \sum_{m=1}^{\infty}\frac{q^{m}}{1-q^{m}}\frac{(-1)_m}{(-q^N)_m}\frac{1+q^N}{1+q^{N+m}}- \sum_{m=N}^{\infty}\frac{2q^m}{1-q^{2m}} + \frac{2q^N}{1-q^{2N}}\\
{} &= \sum_{m=1}^{\infty}\frac{q^{m}}{1-q^{m}}\frac{(-1)_m}{(-q^N)_m}\frac{1+q^{N+m}+q^N(1-q^m)}{1+q^{N+m}} - \sum_{m=N}^{\infty}\frac{2q^m}{1-q^{2m}} + \frac{2q^N}{1-q^{2N}}\\
{} &= \sum_{m=1}^{\infty}\frac{q^{m}}{1-q^{m}}\frac{(-1)_m}{(-q^N)_m} - \sum_{m=N}^{\infty}\frac{2q^m}{1-q^{2m}} + q^N \sum_{m=1}^{\infty}\frac{q^{m}}{1+q^{N+m}}\frac{(-1)_m}{(-q^N)_m} + \frac{2q^N}{1-q^{2N}}\\
{} &= \sum_{m=1}^{N-1} \frac{q^m}{1-q^m}(-1)_m + \frac{q^N}{1+q^N}\left(F\left(\frac{-1}{q}, -q^N; q\right) - 1\right) + \frac{2q^N}{1-q^{2N}}\nonumber\\
&= \sum_{m=1}^{N-1} \frac{q^m}{1-q^m}(-1)_m + \frac{q^N}{1+q^N}\left\{ \frac{1+q^N}{1-q^N}[(-1)_N - 1] - 1\right\} + \frac{2q^N}{1-q^{2N}}\\
 {} &= \sum_{m=1}^{N-1} \frac{q^m}{1-q^m}(-1)_m + \frac{q^N}{1-q^N}(-1)_N\nonumber\\
&= \sum_{m=1}^{N} \frac{q^m}{1-q^m}(-1)_m\nonumber\\
&=S(q, N+1),
\end{align*}}
where in the fifth and sixth steps, we respectively used the induction hypothesis and Lemma \ref{Fineevaluation}. This proves \eqref{ssptdaltproof} and hence completes the proof.
\qed\\

\noindent
\textbf{Second proof of Theorem \ref{chernbeck}.}
%Let $U(q, N)$ and $V(q, N)$ denote the generating functions of $a(n, N)$ and $\textup{ssptd}_{o}(n, N)$ respectively defined in \eqref{anNgen} 
From \eqref{anNgen} and \eqref{ssptdogf}, it suffices to show that $U(q, N)=V(q, N)$, where
\begin{align*}
U(q, N)&:=\sum_{n=1}^{N} \frac{q^n (-q)_{n-1}}{1-q^n},\\
V(q, N)&:=\frac{1-q^N}{2}\left(\sum_{n=1}^{\infty} nq^n (-q^{n+1})_{N-1} + \sum_{n=1}^{\infty} nq^n (q^{n+1})_{N-1}\right).
\end{align*}
Note that $U(q, 1)=V(q, 1)$. Assume that  $U(q, N)=V(q, N)$. Observe that
\begin{equation}\label{inductionequality}
 U(q, N+1) - U(q, N) = \frac{q^{N+1} (-q)_N}{1-q^{N+1}}.
\end{equation}
We will be done if we can show that $U(q, N+1) - U(q, N)=V(q, N+1) - V(q, N)$. Now
\begin{align}\label{inductionmiddleexp}
V(q, N+1) - V(q, N) &= \frac{1}{2}(1-q^{N+1})\left\{\sum_{n=1}^{\infty} nq^n (-q^{n+1})_{N} + \sum_{n=1}^{\infty} nq^n (q^{n+1})_{N}\right\}\nonumber\\
&\quad- \frac{1}{2}(1-q^N)\left\{\sum_{n=1}^{\infty} nq^n (-q^{n+1})_{N-1} + \sum_{n=1}^{\infty} nq^n (q^{n+1})_{N-1}\right\}\nonumber\\
&= \frac{1}{2}q^{N+1}\sum_{n=1}^{\infty}  nq^{n-1} (-q^{n})_{N} - \frac{1}{2}q^{N+1}\sum_{n=1}^{\infty}  nq^n (-q^{n+1})_{N} \nonumber\\
&\quad+ \frac{1}{2}q^{N+1}\sum_{n=1}^{\infty}  nq^{n-1} (q^{n})_{N} - \frac{1}{2}q^{N+1}\sum_{n=1}^{\infty}  nq^{n} (q^{n+1})_{N}.
\end{align}
Consider the first two sums on the right hand side of \eqref{inductionmiddleexp}, that is,
\begin{align*}
 &\frac{1}{2}q^{N+1}\left\{\sum_{n=1}^{\infty}  nq^{n-1} (-q^{n})_{N} - \sum_{n=1}^{\infty}  nq^n (-q^{n+1})_{N}\right\}\nonumber\\
&=\frac{1}{2}q^{N+1}\left\{\sum_{n=1}^{\infty}  nq^{n-1} (-q^{n})_{N} - \sum_{n=1}^{\infty}  (n+1)q^n (-q^{n+1})_{N} + \sum_{n=1}^{\infty} q^n(-q^{n+1})_N\right\}\nonumber\\
&=\frac{1}{2}q^{N+1}\sum_{n=0}^{\infty} q^n(-q^{n+1})_N.
\end{align*}
Similarly, the last two sums on the right hand side of \eqref{inductionmiddleexp} combine to give
%\begin{equation}\label{secondrowsimplify}
 $\frac{q^{N+1}}{2} \sum_{n=0}^{\infty} q^n(q^{n+1})_N$ so that
%\end{equation}
\begin{equation}\label{BNfinalexp}
 V(q, N+1) - V(q, N) = \frac{q^{N+1}}{2}\sum_{n=0}^{\infty} q^n \left\{(-q^{n+1})_N + (q^{n+1})_N \right\}.
\end{equation}
From \eqref{inductionequality} and \eqref{BNfinalexp}, it suffices to show
\begin{equation}\label{finalequality}
(-q)_N = \frac{1-q^{N+1}}{2} \sum_{n=0}^{\infty} q^n \left\{(-q^{n+1})_N + (q^{n+1})_N \right\}.
\end{equation}
To that end, the right-hand side of \eqref{finalequality} can be simplified in the following way:
\begin{align*}
&\frac{(1-q^{N+1})}{2} \sum_{n=0}^{\infty} q^n \left\{(-q^{n+1})_N + (q^{n+1})_N \right\}\nonumber\\
&=\frac{1}{2}\sum_{n=0}^{\infty}q^n\left\{(-q^{n+1})_N + (q^{n+1})_N \right\} - \frac{1}{2}\sum_{n=0}^{\infty}q^{n+N+1}\left\{(-q^{n+1})_N + (q^{n+1})_N \right\}\nonumber\\
&=\frac{1}{2}\sum_{n=0}^{\infty}(1+q^n)(-q^{n+1})_N -  \frac{1}{2}\sum_{n=0}^{\infty}(-q^{n+1})_N 
- \frac{1}{2}\sum_{n=0}^{\infty}(1- q^n)(q^{n+1})_N + \frac{1}{2}\sum_{n=0}^{\infty}(q^{n+1})_N \nonumber\\
&\quad- \frac{1}{2}\sum_{n=0}^{\infty}(1+q^{n+N+1})(-q^{n+1})_N + \frac{1}{2}\sum_{n=0}^{\infty}(-q^{n+1})_N + \frac{1}{2}\sum_{n=0}^{\infty}(1-q^{n+N+1})(q^{n+1})_N - \frac{1}{2}\sum_{n=0}^{\infty}(q^{n+1})_N \nonumber\\
&=\frac{1}{2}\sum_{n=0}^{\infty}(-q^{n})_{N+1}  - 
 \frac{1}{2}\sum_{n=0}^{\infty}(q^{n})_{N+1} -  \frac{1}{2}\sum_{n=0}^{\infty}(-q^{n+1})_{N+1} 
+  \frac{1}{2}\sum_{n=0}^{\infty}(q^{n+1})_{N+1} \nonumber\\
&= \left\{\frac{1}{2}\sum_{n=0}^{\infty}(-q^{n})_{N+1} -\frac{1}{2}\sum_{n=0}^{\infty}(-q^{n+1})_{N+1}\right\}
- \left\{\frac{1}{2}\sum_{n=0}^{\infty}(q^{n})_{N+1} - \frac{1}{2}\sum_{n=0}^{\infty}(q^{n+1})_{N+1}\right\}\nonumber\\
&=(-q)_{N},
\end{align*}
since the sums in each of the two parentheses in the second to last expression telescope resulting in $\frac{1}{2}(-1)_{N+1} - 0 = (-q)_N$. Thus \eqref{finalequality} is proved, which, in turn, gives
\begin{equation*}
 V(q, N+1) - V(q, N)=\frac{q^{N+1} (-q)_N}{1-q^{N+1}},
\end{equation*}
so that by the principle of mathematical induction, we finally deduce that $U(q, N)=V(q, N)$.
\qed\\

Theorem \ref{chernbeck} now yields a nice relation between the generating functions of $N_{\textup{SC}}(n, N)$ and $d(n, N)$. 
\begin{lemma}\label{GFNSCdnN}
Let $N\in\mathbb{N}$. Then
 \begin{equation*}
  2 (-q)_N \sum_{n=1}^{\infty} N_{\textup{SC}}(n, N)q^n - \sum_{n=1}^{N}\left[\begin{matrix} N \\ n \end{matrix} \right] \frac{q^{n(n+1)/2}}{1-q^{n}}  = \sum_{n=1}^{\infty}d(n, N)q^n. 
 \end{equation*}
\end{lemma}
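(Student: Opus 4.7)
The plan is to transform both sides into identifiable quantities and then invoke Theorem \ref{chernbeck} together with Guo--Zeng's identity \eqref{uchigen}.

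First, I will simplify the left-hand side. Starting from Theorem \ref{nscN} and using $(q^2;q^2)_m=(q;q)_m(-q;q)_m$, so that $(q^{2n};q^2)_{N-n+1}=(q)_N(-q)_N/((q)_{n-1}(-q)_{n-1})$, together with $(q^{n+1})_{N-n}=(q)_N/(q)_n$, the summand in Theorem \ref{nscN} collapses to $q^n(-q)_{n-1}/((1-q^n)(-q)_N)$. Comparing with the generating function of $a(n,N)$ from \eqref{anNgen} yields
\[
(-q)_N\sum_{n=1}^\infty N_{\textup{SC}}(n,N)q^n=\sum_{n=1}^\infty a(n,N)q^n,
\]
so that $2(-q)_N\sum_{n=1}^\infty N_{\textup{SC}}(n,N)q^n=2\sum_{n=1}^\infty a(n,N)q^n$.

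Next, I will invoke Theorem \ref{chernbeck}, whose generating-function form reads $\sum a(n,N)q^n=(1-q^N)\sum \textup{ssptd}_o(n,N)q^n$. Combined with \eqref{ssptdogf}, this gives
\[
2\sum_{n=1}^\infty a(n,N)q^n=(1-q^N)\sum_{n=1}^\infty nq^n(-q^{n+1})_{N-1}+(1-q^N)\sum_{n=1}^\infty nq^n(q^{n+1})_{N-1}.
\]
The Guo--Zeng identity \eqref{uchigen} shows the second sum on the right equals $\sum_{n=1}^N q^n/(1-q^n)$, which is the generating function of $d(n,N)$. Thus the lemma reduces to the residual identity
\[
(1-q^N)\sum_{n=1}^\infty nq^n(-q^{n+1})_{N-1}=\sum_{n=1}^N\left[\begin{matrix}N\\n\end{matrix}\right]\frac{q^{n(n+1)/2}}{1-q^n}.
\]

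To establish this, I will apply the $q$-binomial theorem \eqref{binomial0} (replacing $N$ by $N-1$ and $z$ by $-q^{n+1}$) to write $(-q^{n+1})_{N-1}=\sum_{k=0}^{N-1}\left[\begin{matrix}N-1\\k\end{matrix}\right]q^{k(k+1)/2+kn}$. Interchanging summation orders and evaluating $\sum_{n\ge 1}nq^{n(k+1)}=q^{k+1}/(1-q^{k+1})^2$, the substitution $j=k+1$ produces
\[
\sum_{n=1}^\infty nq^n(-q^{n+1})_{N-1}=\sum_{j=1}^N\left[\begin{matrix}N-1\\j-1\end{matrix}\right]\frac{q^{j(j+1)/2}}{(1-q^j)^2}.
\]
Multiplying by $1-q^N$ and using the elementary relation $\left[\begin{matrix}N\\j\end{matrix}\right]=\frac{1-q^N}{1-q^j}\left[\begin{matrix}N-1\\j-1\end{matrix}\right]$ closes the argument. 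The only step requiring some care is this last residual identity, but it needs only a routine $q$-binomial expansion, so no genuine obstacle is anticipated.
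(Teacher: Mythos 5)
Your proof is correct, and it follows the same overall architecture as the paper's: both reduce $2(-q)_N\sum N_{\textup{SC}}(n,N)q^n$ to $2\sum a(n,N)q^n$ via Theorem \ref{nscN}, invoke Theorem \ref{chernbeck}, and then split the doubled odd-part sum into an unsigned piece and a signed piece. Where you diverge is in how the two resulting pieces are identified. The paper handles both combinatorially: the signed piece is matched with $\sum d(n,N)q^n$ through the combinatorial identity \eqref{guozeng} ($d(n,N)=t(n,N)-t(n-N,N)$), and the unsigned piece is matched with $\sum_{n=1}^{N}\left[\begin{smallmatrix} N \\ n \end{smallmatrix}\right]q^{n(n+1)/2}/(1-q^{n})$ by asserting the analogue of the van Hamme interpretation from the introduction (equation \eqref{simvan}, stated with ``it is easy to see''). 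You instead handle both analytically: the signed piece via the $q$-series form \eqref{uchigen} of Guo--Zeng (correctly rewritten as $(1-q^N)\sum_{n\ge1}nq^n(q^{n+1})_{N-1}=\sum_{n=1}^N q^n/(1-q^n)$), and the unsigned piece via an explicit $q$-binomial expansion yielding
\begin{equation*}
(1-q^N)\sum_{n=1}^\infty nq^n(-q^{n+1})_{N-1}=\sum_{j=1}^{N}\frac{1-q^N}{1-q^j}\left[\begin{matrix}N-1\\j-1\end{matrix}\right]\frac{q^{j(j+1)/2}}{1-q^{j}}=\sum_{j=1}^N\left[\begin{matrix}N\\j\end{matrix}\right]\frac{q^{j(j+1)/2}}{1-q^j},
\end{equation*}
which checks out (the exponent bookkeeping $k(k+1)/2+k+1=j(j+1)/2$ with $j=k+1$ is right). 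The net effect is that you supply a self-contained analytic verification of the identity the paper only sketches as \eqref{simvan}, at the cost of losing the weighted-partition interpretation ($\textup{ssptd}$, $\textup{ssptd}_e$, $t(n,N)$) that the paper's route makes visible. Both are valid; yours is marginally more rigorous on the unsigned piece, the paper's is more illuminating combinatorially.
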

\begin{proof}
By Theorem \ref{nscN},
\begin{align}\label{NSCanNlink}
2(-q)_{N}\sum_{n=1}^{\infty}N_{\textup{SC}}(n, N)q^n&=2(-q)_{N}\sum_{n=1}^{N}\frac{q^n(q^{n+1})_{N-n}}{(q^{2n};q^2)_{N-n+1}}\nonumber\\
&= 2(-q)_{N}\sum_{n=1}^{N} \frac{q^n}{(1-q^n) (-q^n)_{N-n+1}}\nonumber\\
&=2\sum_{n=1}^{N} \frac{q^n (-q)_{n-1}}{1-q^n}\nonumber\\
&=2\sum_{n=1}^{\infty} a(n, N)q^n\nonumber\\
&=2\sum_{n=1}^{\infty}\left(\textup{ssptd}_{o}(n, N) - \textup{ssptd}_{o}(n-N, N)\right)q^n,
\end{align}
where in the penultimate and ultimate steps we used \eqref{anNgen} and Theorem \ref{chernbeck} respectively.

We now define two functions analogous to $\textup{ssptd}_{o}(n, N)$. Let $\textup{ssptd}(n, N)$ denote 
the sum of smallest parts in all partitions $\pi$ of $n$ into distinct parts and satisfying $l(\pi) - s(\pi)\leq N-1$ and $\textup{ssptd}_e(n, N)$, the same with the added restriction that there be an even number of parts. 

As proved in the beginning of Section \ref{intro}, the right-hand side of van Hamme's identity \eqref{hammeid} generates that of \eqref{guozeng}. Similar to this, it is easy to see that
\begin{align}\label{simvan}
\sum_{n=1}^{N}\left[\begin{matrix} N \\ n \end{matrix} \right] \frac{q^{n(n+1)/2}}{1-q^{n}}=\sum_{n=1}^{\infty}\left(\textup{ssptd}(n, N) - \textup{ssptd}(n-N, N)\right)q^n.
%&=\sum_{n=1}^{\infty}\left\{\left(\textup{ssptd}_{o}(n, N) - \textup{ssptd}_{o}(n-N, N)\right)-\left(\textup{ssptd}_{e}(n, N) - \textup{ssptd}_{e}(n-N, N)\right)\right\}q^n\nonumber\\
\end{align}
Hence from \eqref{NSCanNlink} and \eqref{simvan} and recalling the definition of $t(n, N)$ from \eqref{tnN},
\begin{align*}
&2 (-q)_N \sum_{n=1}^{\infty} N_{\textup{SC}}(n, N)q^n - \sum_{n=1}^{N}\left[\begin{matrix} N \\ n \end{matrix} \right] \frac{q^{n(n+1)/2}}{1-q^{n}}\nonumber\\
&=\sum_{n=1}^{\infty}\left\{\left(\textup{ssptd}_{o}(n, N) - \textup{ssptd}_{e}(n, N)\right)-\left( \textup{ssptd}_{o}(n-N, N)- \textup{ssptd}_{e}(n-N, N)\right)\right\}q^n\nonumber\\
&=\sum_{n=1}^{\infty}\left(t(n, N)-t(n-N, N)\right)q^{n}\nonumber\\
&=\sum_{n=1}^{\infty}d(n, N)q^n,
\end{align*}
where the last step follows from \eqref{guozeng}. This proves the result.
\end{proof}
We now give an application of Corollary \ref{c=-1finalcor} which gives a new representation for the generating function of $d(n, N)$.
\begin{corollary}\label{dnNGF}
Let $n\in\mathbb{N}$. 
\begin{equation}\label{eqndnN}
 \sum_{n=1}^{N} \frac{(-q)_n}{(q)_n}\frac{q^n}{1-q^n} - 2\sum_{n=1}^{N} \left[\begin{matrix} N \\ n \end{matrix}\right] \frac{q^{n(n+3)/2} (-q)_{n-1}}{(q)_n (1-q^n)} = \sum_{n=1}^{N} \frac{q^n}{1-q^n}.
\end{equation}
\end{corollary}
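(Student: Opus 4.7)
\textbf{Proof plan for Corollary \ref{dnNGF}.} The plan is to combine Corollary \ref{c=-1finalcor} (the $c=-1$ specialization expressed via $N_{\textup{SC}}(n,N)$) with Lemma \ref{GFNSCdnN} (which converts the $N_{\textup{SC}}$-generating function into that of $d(n,N)$), and then reconcile what is left with the target identity by means of a suitable specialization of \eqref{qchuvan}. Concretely, I would first multiply both sides of Corollary \ref{c=-1finalcor} by $\dfrac{2(-q)_N}{(q)_N}$, and then use Lemma \ref{GFNSCdnN} in the form
$$2(-q)_N\sum_{n=1}^{\infty}N_{\textup{SC}}(n,N)q^n \;=\; \sum_{n=1}^{N}\left[\begin{matrix} N\\ n\end{matrix}\right]\frac{q^{n(n+1)/2}}{1-q^n}\;+\;\sum_{n=1}^{N}\frac{q^n}{1-q^n}$$
to eliminate the $N_{\textup{SC}}$-generating function. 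The two copies of $\sum_{k}\left[\begin{matrix} N\\ k\end{matrix}\right]\frac{q^{k(k+1)/2}}{1-q^k}$ so produced cancel exactly, and after rearrangement the corollary reduces to the purely $q$-series statement
$$\sum_{k=1}^{N}\left[\begin{matrix} N\\ k\end{matrix}\right]\frac{q^{k(k+1)/2}(-q)_k}{(q)_k(1-q^k)}\;-\;\frac{(-q)_N-(q)_N}{2(q)_N}\;=\;2\sum_{n=1}^{N}\left[\begin{matrix} N\\ n\end{matrix}\right]\frac{q^{n(n+3)/2}(-q)_{n-1}}{(q)_n(1-q^n)}.$$

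The main algebraic move is to split the $k$-summand on the left using $(-q)_k=(1+q^k)(-q)_{k-1}$ together with the elementary decomposition $1+q^k=(1-q^k)+2q^k$, which yields
$$\left[\begin{matrix} N\\ k\end{matrix}\right]\frac{q^{k(k+1)/2}(-q)_k}{(q)_k(1-q^k)} \;=\; \left[\begin{matrix} N\\ k\end{matrix}\right]\frac{q^{k(k+1)/2}(-q)_{k-1}}{(q)_k}\;+\;2\left[\begin{matrix} N\\ k\end{matrix}\right]\frac{q^{k(k+3)/2}(-q)_{k-1}}{(q)_k(1-q^k)}.$$
The second piece matches the right-hand side of the reduced identity term for term, so the whole problem collapses to verifying the single evaluation
$$\sum_{k=1}^{N}\left[\begin{matrix} N\\ k\end{matrix}\right]\frac{q^{k(k+1)/2}(-q)_{k-1}}{(q)_k}\;=\;\frac{(-q)_N-(q)_N}{2(q)_N}.$$

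This last evaluation is obtained by specializing \eqref{qchuvan} at $a=c=1$, which reads $\sum_{n=0}^{N}\left[\begin{matrix} N\\ n\end{matrix}\right]\frac{(-1)_n q^{n(n+1)/2}}{(q)_n}=\frac{(-q)_N}{(q)_N}$. Since $(-1)_0=1$ and $(-1)_n=2(-q)_{n-1}$ for $n\ge 1$, isolating the tail and dividing by $2$ yields exactly the required formula. No step presents a real obstacle; the only mild subtlety is recognizing that the constant $\frac{(-q)_N-(q)_N}{2(q)_N}$ arising from the multiplicative factor $\frac{2(-q)_N}{(q)_N}$ applied to the right-hand side of Corollary \ref{c=-1finalcor} is precisely what the $a=c=1$ instance of \eqref{qchuvan} supplies, and that the algebraic split of $(-q)_k/(1-q^k)$ produces the factor $q^{n(n+3)/2}$ needed on the right-hand side of the target.
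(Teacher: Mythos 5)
Your proposal is correct and follows essentially the same route as the paper: multiply Corollary \ref{c=-1finalcor} by $2(-q)_N/(q)_N$, eliminate the $N_{\textup{SC}}$-generating function via Lemma \ref{GFNSCdnN}, and finish with the $a=c=1$ case of \eqref{qchuvan}. The paper compresses the final algebra into the phrase ``upon simplification,'' and your explicit split $1+q^k=(1-q^k)+2q^k$ producing the $q^{n(n+3)/2}$ terms is exactly the computation that phrase hides.
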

\begin{proof}
Multiply both sides of Corollary \ref{c=-1finalcor} by $2(-q)_N/(q)_N$ and then simplify the resultant using Lemma \ref{GFNSCdnN} to obtain 
\begin{equation}\label{cor2.13middle}
\sum_{n=1}^{\infty}d(n, N)q^n + \sum_{n=1}^{N} \left[\begin{matrix} N \\ n \end{matrix} \right]\frac{q^{n(n+1)/2}}{1-q^n}\frac{(-q)_n}{(q)_n} = \frac{1}{2} \left\{\frac{(-q)_N}{(q)_N} - 1\right\} + \sum_{n=1}^{N} \frac{(-q)_n}{(q)_n}\frac{q^n}{1-q^n}
\end{equation}
Next, \eqref{qchuvan} with $a=c=1$ gives
\begin{equation*}
\frac{(-q)_N}{(q)_N}-1=\sum_{n=1}^{N}\left[\begin{matrix} N \\ n \end{matrix} \right]\frac{(-1)_n q^{n(n+1)/2}}{(q)_n}.
\end{equation*}
Employing the above equation in \eqref{cor2.13middle} yields \eqref{eqndnN} upon simplification.
\end{proof}

\section{A relation between $d(n, N)$ and a finite analogue of the largest parts function}\label{furcor}
Let $\textup{lpt}(n, N)$ denote the number of occurrences of the largest parts in those partitions $\pi$ of $n$ whose corresponding largest parts are less than or equal to $N$. In what follows, we give a relation between $d(n, N)$ and $\textup{lpt}(n, N)$.
\begin{theorem}\label{finc0}
Let $N\in\mathbb{N}$. Then
\begin{align}
\sum_{n=1}^{N}\frac{q^n}{1-q^n}+ \sum_{n = 1}^{N}\left[\begin{matrix} N \\ n \end{matrix} \right]\frac{q^{n(n+1)}}{(q)_n (1 - q^n)} = \ \sum_{n = 1}^{N}\frac{q^n}{(1 - q^n)(q)_n}. \label{c=0}
	\end{align}
Hence if $\nu(i)$ denotes the number of occurrences of the integer $i$ in a partition $\pi$ of some number $m$ and 
\begin{equation}\label{wnN}
w(m, N):=\sum_{n=1}^{N} \sum_{\substack{\pi \in \mathcal{P}(m, N + 1)\\ \text{exactly $n$ parts} > n}} \left\{\nu(n) + 1\right\},
\end{equation}
then
\begin{equation}\label{idtyy}
d(m, N)+w(m, N)=\textup{lpt}(m, N).
\end{equation}
\end{theorem}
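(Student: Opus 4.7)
The plan is to obtain \eqref{c=0} as the $c\to 0$ specialization of Theorem \ref{fingenc} and then deduce \eqref{idtyy} by recognising each of the three sums in \eqref{c=0} as a partition generating function.

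Setting $c=0$ in Theorem \ref{fingenc} is essentially a substitution: the denominator $1-cq^n$ in the first sum becomes $1$; $F(q^N,q^n;0)=1$ directly from \eqref{finefunction}; the term $\frac{c}{(1-c)^2(q)_N}\bigl(\frac{(q)_N}{(cq)_N}-1\bigr)$ vanishes as $c\to 0$; and the remaining right-hand sum collapses, using $(cq)_n|_{c=0}=1=(c)_{N+1}|_{c=0}$, to $\sum_{n=1}^{N}\frac{q^n}{(q)_n(1-q^n)}$. What results is \eqref{c=0}, provided I can replace the first sum $\frac{1}{(q)_N}\sum_{n=1}^{N}\left[\begin{matrix} N \\ n \end{matrix}\right](-1)^{n-1}nq^{n(n+1)/2}$ on its left-hand side by $\sum_{n=1}^{N}\frac{q^n}{1-q^n}$. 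This auxiliary evaluation is a close variant of Lemma \ref{q_N id}, and I would prove it in the same spirit by differentiating the $q$-binomial expansion \eqref{binomial0} of $(z)_N$ in $z$, using $\frac{d}{dz}(z)_N=-(z)_N\sum_{k=0}^{N-1}\frac{q^k}{1-zq^k}$ on the product side, setting $z=q$ (which is not a zero of $(z)_N$), reindexing $m=k+1$, and multiplying by $-q$; the required identity drops out immediately.

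For the combinatorial half I would interpret each sum in \eqref{c=0} in $q$. The leftmost sum is classically $\sum_{m}d(m,N)q^m$. For the rightmost sum I factor
\[
\frac{q^n}{(q)_n(1-q^n)}=\frac{1}{(q)_{n-1}}\cdot\frac{q^n}{(1-q^n)^2},
\]
where $\frac{1}{(q)_{n-1}}$ generates partitions with parts $\leq n-1$ and $\frac{q^n}{(1-q^n)^2}=\sum_{k\geq 1}kq^{kn}$ counts ``$k$ copies of $n$'' weighted by $k$. Together they generate partitions whose largest part equals $n$, weighted by the multiplicity of that largest part, and summing on $n\leq N$ yields $\sum_{m}\textup{lpt}(m,N)q^m$.

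The middle sum is the only delicate piece. I read $\left[\begin{matrix} N \\ n \end{matrix}\right]$ as the generating function for partitions with exactly $n$ nonnegative parts (zeros allowed) each $\leq N-n$; the factor $q^{n(n+1)}$ shifts each of these $n$ parts up by $n+1$, so $\left[\begin{matrix} N \\ n \end{matrix}\right]q^{n(n+1)}$ generates partitions with \emph{exactly} $n$ parts, each in $[n+1,N+1]$, i.e.\ $n$ parts that are $>n$ and $\leq N+1$. The companion factor decomposes as $\frac{1}{(q)_n(1-q^n)}=\frac{1}{(q)_{n-1}}\cdot\frac{1}{(1-q^n)^2}$; the expansion $\frac{1}{(1-q^n)^2}=\sum_{k\geq 0}(k+1)q^{kn}$ shows this generates partitions with parts $\leq n$ weighted by $\nu(n)+1$. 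Concatenating the two blocks gives a partition $\pi\in\mathcal{P}(m,N+1)$ with exactly $n$ parts $>n$, weighted by $\nu(n)+1$ (the upper block contributes no $n$'s, so $\nu(n)$ is unambiguous); summing on $n$ recovers $w(m,N)$ from \eqref{wnN}. Comparing coefficients of $q^m$ in \eqref{c=0} then delivers $d(m,N)+w(m,N)=\textup{lpt}(m,N)$. The main obstacle I anticipate is precisely this middle-sum interpretation, where one must carefully track that $\left[\begin{matrix} N \\ n \end{matrix}\right]q^{n(n+1)}$ enforces \emph{exactly} $n$ parts $>n$ and that $(1-q^n)^{-2}$ contributes the weight $\nu(n)+1$ rather than $\nu(n)$ alone.
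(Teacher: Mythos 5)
Your proposal is correct and follows essentially the same route as the paper: set $c=0$ in Theorem \ref{fingenc}, replace the first sum by $\sum_{n=1}^{N}q^n/(1-q^n)$, and then read off $d(m,N)$, $w(m,N)$ and $\textup{lpt}(m,N)$ from the three sums exactly as you describe (your middle-sum interpretation, including the weight $\nu(n)+1$ from $(1-q^n)^{-2}$ and the ``exactly $n$ parts $>n$'' bookkeeping, matches the paper's). The only difference is that the paper cites Merca for the identity $\frac{1}{(q)_N}\sum_{n=1}^{N}\left[\begin{smallmatrix} N \\ n \end{smallmatrix}\right](-1)^{n-1}nq^{n(n+1)/2}=\sum_{n=1}^{N}\frac{q^n}{1-q^n}$, whereas you derive it by differentiating \eqref{binomial0} at $z=q$ --- a correct, self-contained substitute in the same spirit as Lemma \ref{q_N id}.
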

\begin{proof}
Letting $c=0$ in Theorem \ref{fingenc}, and then employing a recent result of Merca \cite[Section 2, Equation (8)]{merca}, namely,
\begin{equation*}
\frac{1}{(q)_N} \sum_{n = 1}^{N} \left[\begin{matrix} N \\ n \end{matrix} \right](-1)^{n - 1}n q^{n(n + 1)/2}=\sum_{n=1}^{N}\frac{q^n}{1-q^n},
\end{equation*}
we obtain \eqref{c=0}. Clearly, the right-hand side of \eqref{c=0} is the generating function of $\textup{lpt}(m, N)$. So we need only interpret
$\displaystyle\sum_{n = 1}^{N}\left[\begin{matrix} N \\ n \end{matrix} \right]\frac{q^{n(n+1)}}{(q)_n (1 - q^n)}$. Now $\left[\begin{matrix} N \\ n \end{matrix}\right]$ is the generating function of the number of partitions into at most $n$ parts each $\leq N-n$.
A typical partition from this set would look like $a_1 + a_2 + \cdots + a_n$ with 
$N-n \geq a_1 \geq a_2 \geq \cdots \geq a_n \geq 0.$ Add $n$ copies of $n + 1$ to this
partition, one copy to each of the $a_i$'s (corresponding to the $q^{n(n+1)}$ term in the numerator of the sum). The partition now becomes $(a_1+n+1)+(a_2+n+1)+ \cdots + (a_n+n+1)$, where each $a_i+n+1$ satisfies $n+1 \leq a_i+n+1 \leq N+1$. So we get a partition, say $\lambda$, into $n$ parts each part being greater than $n$. The term 
$\frac{1}{(q)_n (1-q^n)}$ remains to be dealt with. Since
\begin{align*}
\frac{1}{(q)_n (1-q^n)} = \ \frac{1}{(1-q)}\cdot\frac{1}{(1-q^2)} \cdots \frac{1}{(1-q^{n-1})} \sum_{\nu(n)=0}^{\infty}(\nu(n)+1)q^{\nu(n)n},
\end{align*}
the left-hand side represents partitions into parts $\leq n$ with weight $\nu(n)+1$. Putting this together with $\lambda$, we obtain a partition with each part $\leq N+1$, exactly $n$ parts greater than $n$, and with weight $\nu(n)+1$. Thus the coefficient of $q^m$
in $\displaystyle\sum_{n = 1}^{N}\left[\begin{matrix} N \\ n \end{matrix} \right]\frac{q^{n(n+1)}}{(q)_n (1 - q^n)}$ is 
%\begin{equation*}
$\displaystyle\sum_{n=1}^{N} \sum_{\substack{\pi \in \mathcal{P}(m, N+1) \\ \text{exactly $n$ parts} > n}} \left\{\nu(n)+1\right\}=w(m, N)$, by \eqref{wnN}.
%\end{equation*}
Thus we obtain \eqref{idtyy}.
\end{proof}

\section{An example of a generalized sum-of-tails identity and its combinatorial implication}\label{6.15}

The identity in \eqref{atw} was instrumental in the first proof of Lemma \ref{sumexpliciteval} of Section \ref{nscsection}. This identity appears to be new and is an example of a generalization of `sum-of-tails identity', a topic first initiated by Ramanujan \cite[p.~14]{lnb} and later developed by Andrews \cite{andrews1986}, Andrews and Freitas \cite{af}, Andrews, Jim\'{e}nez-Urroz and Ono \cite{ajo}, Patkowski \cite{patkowskicm}, and Zagier \cite{zagiertop}, to name a few. 

Add the $j=0$ term of the sum on the left-hand side of \eqref{atw}, that is, $(q)_{\infty}-1$, to both sides of \eqref{atw}, then replace $j$ by $j-1$ and $m$ by $m+1$ to obtain
\begin{equation}\label{8.1}
\sum_{j=1}^{\infty}(-1)^{j-1}((q^{j})_{m}-1)=\frac{1}{2}\left(\frac{(q)_{m}}{(-q)_{m}}-1\right).
\end{equation}
The partition-theoretic interpretation of the above identity is
\begin{equation}\label{pti}
\sum_{\pi\in\mathcal{D}(n, m)}(-1)^{\#(\pi)}\sum_{j=\xi}^{s(\pi)}(-1)^{j-1}=\frac{1}{2}\sum_{\substack{\pi\in\overline{\mathcal{P}}(n)\\ l(\pi)\leq m}}(-1)^{\#(\pi)},
\end{equation}
where $\xi:=\max\{1, l(\pi)-(m-1)\}$, $\overline{\mathcal{P}}(n)$ is the collection of overpartitions of $n$, and, as mentioned in the introduction, $\mathcal{D}(n, m)$ is the collection of partitions of $n$ into distinct parts such that $l(\pi)-s(\pi) \leq m-1$. Clearly, the right-hand side of \eqref{8.1} is the generating function of that of \eqref{pti}. Now consider the summand on the left-hand side of \eqref{8.1}. Any partition $\pi$ that is counted by $(q^{j})_m-1$ is a partition into distinct parts counted with weight $(-1)^{\#(\pi)}$ and satisfying $s(\pi)\leq j$ and $l(\pi)\leq m+j-1$. This necessitates $l(\pi)-s(\pi)\leq m-1$. Thus $j$, the index of summation on the left-hand side of \eqref{pti} runs from $l(\pi)-m+1$ to $s(\pi)$. However, $l(\pi)-m+1$ might be negative, which is why we have to take the lower limit of summation to be $\xi=\max\{1, l(\pi)-(m-1)\}$. This completes the proof of \eqref{pti}.

The limiting case $m\to \infty$ of \eqref{8.1} is well-known, and is the first equality below:
\begin{equation}\label{conject}
\sum_{j=1}^{\infty}(-1)^{j-1}((q^{j})_{\infty}-1)=\frac{1}{2}\left(\frac{(q)_{\infty}}{(-q)_{\infty}}-1\right)=\sum_{j=1}^{\infty}(-1)^jq^{j^2},
\end{equation}
%\begin{equation}\label{conject}
%\sum_{j=1}^{\infty}\frac{(-1)^j}{(q)_j}((q)_{\infty}-(q)_j)=\sum_{j=1}^{\infty}(-1)^j((q^{j+1})_{\infty}-1)=\frac{1}{2}+\frac{(q)_{\infty}}{(-1)_{\infty}}-(q)_{\infty},
%\end{equation}
%is well-known.
% To see this, add the $j=0$ term of the sum on the left, that is, $(q)_{\infty}-1$ to both sides of \eqref{conject}. This gives
where the second equality follows from Gauss' identity $(q)_{\infty}/(-q)_{\infty}=1+2\sum_{j=1}^{\infty}(-1)^jq^{j^2}$. The above identity is precisely Equation (3.22) in \cite{agl13}.

%This identity is reminiscent of the sums-of-tails identities, a topic first initiated by Ramanujan \cite[p.~14]{lnb} and later developed by Andrews \cite{andrews1986}, Andrews and Freitas \cite{af}, Andrews, Jim\'{e}nez-Urroz and Ono \cite{ajo}, Patkowski \cite{patkowskicm}, and Zagier \cite{zagiertop}, to name a few. Indeed, \eqref{conject}
Another proof of \eqref{conject} can be obtained by letting $t=q, a=0$, and $g_n=(-1)^n/(q)_n$ (so that $g(x):=\sum_{n=0}^{\infty}g_nx^n=1/(-x;q)_{\infty}$) in Theorem 4.1 of Andrews and Freitas \cite{af}.

Note also that an analogous identity
\begin{equation*}
\sum_{j=0}^{\infty}((q^{j+1})_{\infty}-1)=-\sum_{n=1}^{\infty}\frac{q^n}{1-q^n}.
\end{equation*}
is well-known \cite[p.~14, Equations (12.41), (12.45)]{fine}.

\section{A finite analogue of Garvan's identity and its special cases}\label{fingi}
We prove Theorem \ref{fingithm} in this section. Its corollaries are then discussed. We begin with some definitions and lemmas. Let $N\in\mathbb{N}$. We define the finite analogue of Fine's function \eqref{finefunction} by
\begin{equation}\label{finefunctionfin}
F_{N}(\a,\b;\tau)=F_{N}(\a,\b;\tau:q):=\sum_{n=0}^{N}\left[\begin{matrix} N\\n\end{matrix}\right]\frac{(\alpha q)_{n}(\tau)_{N-n}(q)_n \tau^{n}}{(\beta q)_{n}(\tau)_N }.
\end{equation}
We need the partial fraction decomposition of $F_{N}(\a, \b; t)$ which generalizes \eqref{fine16.3}.
\begin{lemma}\label{pfdfin}
For $N\in\mathbb{N}$, we have
\begin{equation*}
F_{N}(a, b; t)=\frac{(1-t q^{N})(a q)_N}{(b q)_{N}}\sum_{n=0}^{N}\left[\begin{matrix} N\\n\end{matrix}\right]\frac{(b/a)_n(a q)_{N-n}(a q)^n}{(a q)_{N}(1-t q^n)}.
\end{equation*}
\end{lemma}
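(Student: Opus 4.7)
The plan is to rewrite both sides as terminating ${}_3\phi_2(q,q)$ series and then bridge them by a single application of the finite Heine transformation \eqref{fht}. No new ingredient beyond \eqref{fht} is needed; the only work is careful bookkeeping with the reversal identities
\[
\frac{(q)_N}{(q)_{N-n}} = (-1)^n q^{Nn-n(n-1)/2}(q^{-N})_n,\qquad (a)_n=(-a)^n q^{n(n-1)/2}(q^{1-n}/a)_n.
\]

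For the left-hand side, starting from \eqref{finefunctionfin} I would first cancel $(q)_n$ against the $(q)_n$ in the denominator of $\left[\begin{matrix}N\\n\end{matrix}\right]$ and then use $(t)_{N-n}/(t)_N=1/(tq^{N-n})_n$ together with the second reversal identity applied to $(tq^{N-n})_n$ in order to introduce the factor $(q^{1-N}/t)_n$. Combining with the first reversal identity applied to $(q)_N/(q)_{N-n}$, the powers of $q$ and $(-1)^n$ collapse to leave
\[
F_N(a,b;t)=\sum_{n=0}^{N}\frac{(q^{-N})_n(aq)_n}{(bq)_n(q^{1-N}/t)_n}\,q^n
={}_3\phi_2\!\left[\begin{matrix} q^{-N},\;aq,\;q \\ bq,\;q^{1-N}/t\end{matrix};\,q,q\right].
\]

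For the right-hand side, the same two reversal identities applied respectively to $(q)_N/(q)_{N-n}$ and to $(aq)_{N-n}/(aq)_N=1/(aq^{N-n+1})_n$ produce the factors $(q^{-N})_n$ and $(q^{-N}/a)_n$. Writing $1/(1-tq^n)=(t)_n/[(1-t)(tq)_n]$ and using the elementary identity $(1-tq^N)/(1-t)=(tq)_N/(t)_N$ then puts the right-hand side of Lemma \ref{pfdfin} into the form
\[
\frac{(tq)_N(aq)_N}{(t)_N(bq)_N}\,{}_3\phi_2\!\left[\begin{matrix} q^{-N},\;b/a,\;t \\ q^{-N}/a,\;tq\end{matrix};\,q,q\right].
\]

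To finish, apply \eqref{fht} to this ${}_3\phi_2$ with the parameter specialisation $\alpha=b/a$, $\beta=t$, $\gamma=tq$, $\tau=aq$ (so that the parameter $q^{1-N}/\tau$ appearing in \eqref{fht} becomes $q^{-N}/a$, matching the lower index of the series). The transformation produces the prefactor $(t,\,bq;q)_N/(tq,\,aq;q)_N$, which cancels exactly against $(tq)_N(aq)_N/[(t)_N(bq)_N]$, and produces the transformed series
\[
{}_3\phi_2\!\left[\begin{matrix} q^{-N},\;(tq)/t,\;aq \\ (b/a)(aq),\;q^{1-N}/t\end{matrix};\,q,q\right]
={}_3\phi_2\!\left[\begin{matrix} q^{-N},\;q,\;aq \\ bq,\;q^{1-N}/t\end{matrix};\,q,q\right],
\]
which is precisely the ${}_3\phi_2$ representation of $F_N(a,b;t)$ derived above. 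Thus both sides agree. The only potentially subtle point is justifying that the parameter choice $\tau=aq$ is admissible in \eqref{fht}—i.e.\ that none of the shifted factorials $(aq;q)_N$, $(tq;q)_N$, $(t;q)_N$, $(bq;q)_N$ vanish—but this follows from the hypotheses on $a,b,t$ implicit in the definition \eqref{finefunctionfin}. The main obstacle, if any, is therefore purely the bookkeeping in the two reductions to ${}_3\phi_2$ form.
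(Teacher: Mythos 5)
Your proposal is correct and is essentially the paper's own proof: the paper likewise obtains the lemma by setting $\a=b/a$, $\b=t$, $\g=tq$, $\tau=aq$ in the finite Heine transformation \eqref{fht} and simplifying with \eqref{e2}, which is exactly the reduction-to-${}_3\phi_2$ bookkeeping you carry out. The only difference is that you spell out the reversal identities explicitly where the paper compresses them into a one-line citation of \eqref{e2}.
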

\begin{proof}
Let $\a=b/a, \b=t, \g=tq$ and $\tau=aq$ in \eqref{fht} and simplify using \eqref{e2}.
\end{proof}
\begin{lemma}[Finite analogue of the Rogers-Fine identity]\label{frfi}
Let $N\in\mathbb{N}$. For $\b\neq 0$,
\begin{align*}
F_{N}(\a,\b;\tau)=(1- \tau q^{N})\sum_{n=0}^{N}\left[\begin{matrix} N\\n\end{matrix}\right]\frac{(\alpha q)_n(q)_n\left(\frac{\alpha\tau q}{\beta}\right)_n(\alpha \tau q^2)_{N-1}(\tau\beta)^nq^{n^2}(1-\alpha\tau q^{2n+1})}{(\beta q)_n(\tau)_{n+1}(\alpha \tau q^2)_{N+n}}.
\end{align*}
\end{lemma}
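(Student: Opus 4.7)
The plan is to reformulate both sides as terminating basic hypergeometric series and match them via a Sears-type transformation for terminating ${}_3\phi_2$ (itself an iterated form of the finite Heine transformation \eqref{fht}) together with the $d\to\infty$ limit of Watson's theorem \eqref{watson87}.

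First, using the standard manipulations $(q^{N-n+1};q)_n = (-1)^n q^{Nn-\binom{n}{2}}(q^{-N};q)_n$ and $(\tau q^{N-n};q)_n = (-\tau)^n q^{n(N-n)+\binom{n}{2}}(q^{1-N}/\tau;q)_n$ applied to \eqref{finefunctionfin}, I would rewrite
\[
F_N(\alpha,\beta;\tau) \,=\, {}_3\phi_2\!\left[\begin{matrix} q^{-N},\,\alpha q,\,q\\ \beta q,\,q^{1-N}/\tau\end{matrix};\, q, q\right].
\]
A Sears-type transformation for terminating ${}_3\phi_2$ series (Gasper--Rahman, III.12) then converts this into
\[
F_N(\alpha,\beta;\tau) \,=\, \frac{1-\tau q^N}{1-\tau}\,{}_3\phi_2\!\left[\begin{matrix} q^{-N},\,q,\,\beta/\alpha\\ \beta q,\,\tau q\end{matrix};\, q,\, \alpha\tau q^{N+1}\right],
\]
the prefactor $(1-\tau q^N)/(1-\tau)$ emerging from the telescoping simplification of $q^N(q^{-N}/\tau;q)_N/(q^{1-N}/\tau;q)_N$.

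For the right-hand side of the lemma, set $a = \alpha\tau q$, so that $(1-\alpha\tau q^{2n+1}) = (1-aq^{2n})$; factor $(\alpha\tau q^2;q)_{N-1}$ out of the denominator via $(\alpha\tau q^2;q)_{N+n} = (\alpha\tau q^2;q)_{N-1}(1-aq^N)(aq^{N+1};q)_n$, and use $(\tau;q)_{n+1} = (1-\tau)(\tau q;q)_n$, to obtain
\[
\mathrm{RHS} \,=\, \frac{(1-\tau q^N)(1-a)}{(1-aq^N)(1-\tau)}\sum_{n=0}^{N}\frac{1-aq^{2n}}{1-a}\,\frac{(q^{-N})_n(\alpha q)_n(a/\beta)_n}{(\beta q)_n(\tau q)_n(aq^{N+1})_n}\,(-1)^n q^{Nn+n(n+1)/2}(\tau\beta)^n.
\]
The inner sum is precisely the $d\to\infty$ limit of the very-well-poised ${}_8\phi_7$ on the left of Watson's theorem \eqref{watson87} with $b=\alpha q$, $c=a/\beta$, $e=q$, $f=q^{-N}$; indeed $(d;q)_n z^n$ with $z = a^2 q^{N+2}/(bcde)$ contributes $(-1)^n q^{\binom{n}{2}+(N+1)n}(\tau\beta)^n = (-1)^n q^{Nn+n(n+1)/2}(\tau\beta)^n$ under this limit. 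Invoking Watson, the limiting ${}_4\phi_3$ on its right collapses to $\dfrac{(aq;q)_N}{(a;q)_N}\,{}_3\phi_2[\beta/\alpha,\,q,\,q^{-N};\,\tau q,\,\beta q;\,q,\,\alpha\tau q^{N+1}]$; the ratio $(aq;q)_N/(a;q)_N = (1-aq^N)/(1-a)$ cancels the corresponding factor outside the sum, leaving exactly the Sears-transformed expression for $F_N$ derived above.

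The main obstacle will be the careful bookkeeping under the $d\to\infty$ limit in Watson's theorem---in particular, verifying that $(d;q)_n z^n/(aq/d;q)_n$ contributes the precise factor $(-1)^n q^{\binom{n}{2}+(N+1)n}(\tau\beta)^n$ and that the remaining Pochhammer ratios in Watson's ${}_4\phi_3$ collapse to the stated terminating ${}_3\phi_2$ with argument $\alpha\tau q^{N+1}$---together with establishing the Sears-type ${}_3\phi_2$ transformation producing the prefactor $(1-\tau q^N)/(1-\tau)$, which may be derived as a composition of the finite Heine transformation \eqref{fht} and its corollary \eqref{corfht} along with an appropriate reversal of the summation index.
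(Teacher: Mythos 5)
Your proposal is correct: every step checks out (the identification $F_N(\a,\b;\tau)={}_3\phi_2\bigl[q^{-N},\a q,q;\b q,q^{1-N}/\tau;q,q\bigr]$, the reduction of the lemma's right-hand side to a confluent limit of the very-well-poised ${}_8\phi_7$, and the matching via Watson plus the Sears transformation (III.12)). The paper's proof uses the same central tool — Watson's $q$-Whipple theorem with one parameter sent to infinity — but with a different assignment of slots: it takes $a=\a\tau q$, $b=\a\tau q/\b$, $d=\a q$, $e=q$ and lets $c\to\infty$. Since the ${}_8\phi_7$ is symmetric in $b,c,d,e$, its limiting left-hand side is literally the same sum you obtain by letting $d\to\infty$ with $b=\a q$, $c=a/\b$; the difference is entirely on Watson's right-hand side, where the ${}_4\phi_3$ treats the $d,e$ slots asymmetrically from $b,c$. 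By placing $\a q$ and $q$ in the $d,e$ slots, the paper makes the limiting ${}_4\phi_3$ collapse \emph{directly} to ${}_3\phi_2\bigl[q^{-N},\a q,q;\b q,q^{1-N}/\tau;q,q\bigr]=F_N(\a,\b;\tau)$ with argument $q$, so no further transformation is needed. Your assignment puts $\a q$ in the $b$ slot, which yields instead the ${}_3\phi_2$ with argument $\a\tau q^{N+1}$ and therefore forces the additional appeal to the Sears-type transformation to recognize it as $F_N$. Both routes are valid; swapping which parameter goes to infinity eliminates your last step and gives the paper's one-line proof.
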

\begin{proof}
Let $a=\tau\alpha q,$ $b=\frac{\alpha\tau q}{\beta}$, $d=\alpha q$, $e=q$ and $c\rightarrow \infty$ in Watson's $q$-analogue of Whipple's theorem, that is, \eqref{watson87}, and simplify.
\end{proof}
\begin{remark}
Letting $N\to\infty$ in the above lemma gives the well-known Rogers-Fine identity \cite[p.~15, Equation (14.1)]{fine}:
\begin{equation*}
F(\a, \b; \tau)=\sum_{n=0}^{\infty}\frac{(\a q)_n\left(\frac{\a\tau q}{\b}\right)_n(\tau\beta)^nq^{n^2}(1-\a\tau q^{2n+1})}{(\beta q)_{n}(\tau)_{n+1}}.
\end{equation*}
\end{remark}
Let
\begin{align}\label{s1zqn}
S_1(z, q, N):=\sum_{n=1}^{N}\left[\begin{matrix} N\\n\end{matrix}\right]_{q^2}\frac{ (q^2;q^2)_{n}(q^2;q^2)_{n-1}(zq;q^2)_{N-n}(zq)^{n}}{(zq^2;q^2)_{n}(zq;q^2)_{N}}.
\end{align}
Using \eqref{finefunctionfin}, it can be easily expressed in terms of the finite analogue of Fine's function:
\begin{equation}\label{sf}
S_1(z, q, N)=\frac{zq(1-q^{2N})}{(1-zq^2)(1-zq^{2N-1})}F_{N-1}(1,zq^2,zq:q^2).
\end{equation}
In the following two lemmas, we derive two representations for $S_1(z, q, N)$, which will be crucial in proving Theorem \ref{fingithm}.
\begin{lemma}\label{interlemma}
Let $N\in\mathbb{N}$. Then
\begin{align}\label{inter}
S_1(z, q, N)=\sum_{n=1}^{N}\left[\begin{matrix} N\\n\end{matrix}\right]_{q^2} \frac{(q;q^2)_{n-1}(q^2;q^2)_{n}(zq^2;q^2)_{N-n} z^nq^{2n-1}}{(zq;q^2)_{n}(zq^2;q^2)_{N}}.
\end{align}
\end{lemma}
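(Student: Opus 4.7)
The strategy is to interpret both sides of Lemma \ref{interlemma} as scalar multiples of the finite Fine function $F_{N-1}(\alpha,\beta;\tau:q^2)$, with suitably chosen parameters, and then to relate these two Fine-function values by the companion finite Heine transformation \eqref{corfht}. The left-hand side $S_1(z,q,N)$ has already been cast in this form in \eqref{sf}, so all remaining preliminary work is on the right-hand side.

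For the right-hand side, I would shift $n \mapsto m+1$ and pull out the factor $zq$. Using the elementary identities
\[
\left[\begin{matrix} N\\m+1\end{matrix}\right]_{q^2} = \frac{1-q^{2N}}{1-q^{2m+2}}\left[\begin{matrix} N-1\\m\end{matrix}\right]_{q^2},\qquad (zq;q^2)_{m+1}=(1-zq)(zq^3;q^2)_m,
\]
together with $(q^2;q^2)_{m+1}/(1-q^{2m+2})=(q^2;q^2)_m$, the inner sum becomes recognizable as $(zq^2;q^2)_{N-1}\,F_{N-1}(1/q, zq; zq^2:q^2)$, and the surviving prefactor collapses (via $(zq^2;q^2)_{N-1}/(zq^2;q^2)_N = 1/(1-zq^{2N})$) to yield
\[
\text{RHS of Lemma \ref{interlemma}} \;=\; \frac{zq(1-q^{2N})}{(1-zq)(1-zq^{2N})}\, F_{N-1}(1/q,\, zq;\, zq^2:q^2).
\]

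Combined with \eqref{sf}, the lemma reduces to the single transformation
\[
(1-zq)(1-zq^{2N})\, F_{N-1}(1,\, zq^2;\, zq:q^2) \;=\; (1-zq^2)(1-zq^{2N-1})\, F_{N-1}(1/q,\, zq;\, zq^2:q^2).
\]
To prove this I would first convert each $F_{N-1}$ into a terminating ${}_3\phi_2$ via the base-$q^2$ form of \eqref{e2}, which gives
\[
F_M(\alpha,\beta;\tau:q^2) \;=\; {}_3\phi_2\!\left[\begin{matrix} q^{-2M},\ \alpha q^2,\ q^2 \\ \beta q^2,\ q^{-2(M-1)}/\tau \end{matrix};\, q^2,\, q^2\right].
\]
Then apply the companion finite Heine transformation \eqref{corfht} with $q\to q^2$, $N\to N-1$, and $(\alpha,\beta,\gamma,\tau)=(q^2,q^2,zq^4,zq)$. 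Under this choice one checks $\gamma/\beta=zq^2$, $\beta\tau=zq^3$, $\alpha\beta\tau/\gamma=q$, and $\beta q^{2(1-(N-1))}/\gamma = q^{-2(N-1)}/z$, so the transformed ${}_3\phi_2$ is exactly the one representing $F_{N-1}(1/q,zq;zq^2:q^2)$, and the Pochhammer prefactor $(zq^2,zq^3;q^2)_{N-1}/(zq^4,zq;q^2)_{N-1}$ telescopes to $(1-zq^2)(1-zq^{2N-1})/[(1-zq^{2N})(1-zq)]$, precisely the ratio required.

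The main obstacle is bookkeeping: one must verify that the two finite-sum representations of the sides align so that \eqref{corfht} applies with matching upper and lower rows in the transformed ${}_3\phi_2$, and track the telescoping of the Pochhammer prefactor. There is no analytic content beyond careful verification of algebraic identities among Pochhammer products with base $q^2$.
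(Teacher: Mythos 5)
Your proposal is correct and is essentially the paper's own proof: both reduce the identity to a single application of the companion finite Heine transformation \eqref{corfht} with $q\to q^2$, $N\to N-1$ and $(\alpha,\beta,\gamma,\tau)=(q^2,q^2,zq^4,zq)$, after rewriting each side as a terminating ${}_3\phi_2$ via \eqref{e2}/\eqref{ee1}. The only difference is presentational — you package the two sides as finite Fine functions $F_{N-1}(1,zq^2;zq:q^2)$ and $F_{N-1}(1/q,zq;zq^2:q^2)$ and work symmetrically from both ends, whereas the paper transforms the left-hand side directly into the right-hand side.
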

\begin{proof}
Note that
\begin{align}\label{ee0}
S_1(z, q, N)&=\frac{1}{(1-zq^{2N-1})}\sum_{n=0}^{N-1}\left[\begin{matrix} N\\n+1\end{matrix}\right]_{q^2}\frac{ (q^2;q^2)_{n+1}(q^2;q^2)_n(zq;q^2)_{N-n-1}(zq)^{n+1}}{(zq^2;q^2)_{n+1}(zq;q^2)_{N-1}}\nonumber\\
&=\frac{zq(1-q^{2N})}{(1-zq^{2N-1})(1-zq^2)}\sum_{n=0}^{N-1}\frac{ (q^2;q^2)_{N-1}(zq;q^2)_{N-n-1}(q^2;q^2)_n}{(q^2;q^2)_{N-n-1}(zq;q^2)_{N-1}(zq^4;q^2)_n}(zq)^{n}\nonumber\\
&=\frac{zq(1-q^{2N})}{(1-zq^{2N-1})(1-zq^2)}\sum_{n=0}^{N-1}\frac{ (q^{2-2N};q^2)_n(q^2;q^2)_{n}}{(q^{3-2N}/z;q^2)_n(zq^4;q^2)_{n}}q^{2n}\nonumber\\
&=\frac{zq(1-q^{2N})}{(1-zq^{2N-1})(1-zq^2)}{}_3 \phi _2
\left[\begin{matrix}
q^{2-2N}, &q^2, &q^2 \\
zq^4, & \frac{q^{3-2N}}{z}
\end{matrix}; q^2, q^2\right]
\end{align}
where in the penultimate step we used \cite[p.~351, Appendix (I.11)]{gasperrahman}
\begin{equation}\label{ee1}
\frac{(b;q^2)_N}{(a;q^2)_N}\frac{(a;q^2)_{N-n}}{(b;q^2)_{N-n}}\left(\frac{a}{b}\right)^n=\frac{(q^{2-2N}/b;q^2)_n}{(q^{2-2N}/a;q^2)_n}
\end{equation}
with $a=zq, b=q^2$ and $N$ replaced by $N-1$. Now use \eqref{corfht} with $N$ and $q$ respectively replaced by $N-1$ and $q^2$, and then let $\alpha=\beta=q^2$, $\tau=zq$, and $\gamma=zq^4$ to transform the ${}_3\phi_{2}$ in \eqref{ee0} so as to obtain
\begin{align*}
%{}_3 \phi _2
%\begin{bmatrix}
%q^{-2N}, &q^2, &q^2 ;& q^2,& q^2\\
%zq^4, & \frac{q^{1-2N}}{z}
%\end{bmatrix}
S_1(z, q, N)&=\frac{zq(1-q^{2N})}{(1-zq)(1-zq^{2N})}{}_3 \phi _2
\left[\begin{matrix}
q^{2-2N}, &q, &q^2 \\
zq^3, & \frac{q^{2-2N}}{z}
\end{matrix}; q^2, q^2\right]\nonumber\\
%\frac{(q;q^2)_n(q^{-2N};q^2)_nq^{2n}}{(zq^3;q^2)_n(q^{-2N}/z;q^2)_n}\nonumber\\
&=\frac{zq(1-q^{2N})}{(1-zq)(1-zq^{2N})}\sum_{n=0}^{N-1}\frac{(q^2;q^2)_{N-1}(zq^2;q^2)_{N-n-1}(q;q^2)_nz^nq^{2n}}{(zq^2;q^2)_{N-1}(q^2;q^2)_{N-n-1}(zq^3;q^2)_n}\nonumber\\
&=\sum_{n=1}^{N}\left[\begin{matrix} N\\n\end{matrix}\right]_{q^2} \frac{(q;q^2)_{n-1}(q^2;q^2)_{n}(zq^2;q^2)_{N-n} z^nq^{2n-1}}{(zq;q^2)_{n}(zq^2;q^2)_{N}},
\end{align*}
where in the second step we again used \eqref{ee1} with $N$ replaced by $N-1$, $a=zq^2$ and $b=q^2$. This proves \eqref{inter}.
\end{proof}
On page $5$ of Ramanujan's Lost Notebook \cite{lnb} (see also \cite[p.~29, Entry 1.7.2]{abramlostII} we find the following identity valid for $|b|<1$ and $a\in\mathbb{C}$:
\begin{align}\label{ramfina}
\sum_{n=0}^{\infty} \frac{(-1)^n (-q;q)_n (- \frac{aq}{b} ; q)_n \, b^n }{(aq; q^2)_{n+1} }  = \sum_{n=0}^{\infty}  \frac{(-1)^n (- \frac{aq}{b} ; q)_n \, b^n q^{\frac{n(n+1)}{2}} }{(-b; q)_{n+1}}.
\end{align}
In the following lemma, we derive a finite analogue of the special case of \eqref{ramfina} when $a=-b=zq$, which is needed in proving Theorem \ref{fingithm}.
\begin{lemma}\label{1.7.2finthm}
Let $N\in\mathbb{N}$. Let $S_1(z, q, N)$ be defined in \eqref{s1zqn}. Then
\begin{align}\label{1.7.2fin}
%&\sum_{n=1}^{N} \left[\begin{matrix} N\\n\end{matrix}\right]_{q^2}\frac{(q^2;q^2)_{n-1}(zq;q^2)_{N-n}(q^2;q^2)_{n} z^nq^{n}}{(zq^2;q^2)_{n}(zq;q^2)_{N} }\nonumber\\
S_1(z, q, N)=\sum_{n=1}^{N}\left[\begin{matrix} N\\n\end{matrix}\right]_{q^2}\left(\frac{(q;q)_{2n-2}z^{2n-1}q^{n(2n-1)}}{(zq;q)_{2n-1}}+\frac{(q;q)_{2n-1}z^{2n}q^{n(2n+1)}}{(zq;q)_{2n}}\right)\frac{(q^{2};q^2)_n}{(zq^{2N+1};q^2)_{n}}.
\end{align}
\end{lemma}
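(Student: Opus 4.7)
The plan is to show that the right-hand side of \eqref{1.7.2fin}, call it $R_N(z,q)$, coincides with the representation of $S_1(z,q,N)$ established in Lemma \ref{interlemma}. The key first step is to combine the two summands using the elementary identity
\begin{equation*}
\frac{(q;q)_{2n-2}z^{2n-1}q^{n(2n-1)}}{(zq;q)_{2n-1}} + \frac{(q;q)_{2n-1}z^{2n}q^{n(2n+1)}}{(zq;q)_{2n}} = \frac{(q;q)_{2n-2}z^{2n-1}q^{n(2n-1)}\bigl(1-zq^{4n-1}\bigr)}{(zq;q)_{2n}},
\end{equation*}
which follows from $(1-zq^{2n}) + zq^{2n}(1-q^{2n-1}) = 1-zq^{4n-1}$. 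Then, splitting $(q;q)_{2n-2} = (q;q^2)_{n-1}(q^2;q^2)_{n-1}$ and $(zq;q)_{2n} = (zq;q^2)_n(zq^2;q^2)_n$ recasts $R_N$ as a sum entirely in base $q^2$.

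The appearance of the prefactor $(1-zq^{4n-1})$ is precisely the very-well-poised profile $(1-aq^{4n})/(1-a)$ in base $q^2$ with $a=zq^{-1}$, while the denominator factor $(zq^{2N+1};q^2)_n$ matches $(aq^{2N+2};q^2)_n$ for the same $a$. This strongly suggests that after the above rearrangement, $R_N$ is expressible as a terminating very-well-poised ${}_8\phi_7$ series in base $q^2$ with $a=zq^{-1}$, whose remaining parameters are selected so that the resulting lower parameters in the denominator are $zq^2, zq^3, zq^4$ together with $q^2$ (as dictated by the rewritten summand). Applying Watson's $q$-analogue of Whipple's theorem \eqref{watson87} then converts this ${}_8\phi_7$ into a ${}_4\phi_3$. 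A subsequent finite-Heine transformation (for instance \eqref{corfht} applied with $q\mapsto q^2$) collapses two of the four rational parameters, yielding precisely the ${}_3\phi_2$ representation
\begin{equation*}
S_1(z,q,N) = \frac{zq(1-q^{2N})}{(1-zq)(1-zq^{2N})}\,{}_3\phi_2\!\left[\begin{matrix} q^{2-2N}, q, q^2 \\ zq^3,\ q^{2-2N}/z \end{matrix}; q^2, q^2\right],
\end{equation*}
obtained in the proof of Lemma \ref{interlemma}. Matching prefactors then finishes the proof.

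The main obstacle is the parameter bookkeeping for Watson's theorem: one must choose $b,c,d,e$ so that Watson's balancing condition is met, so that the ${}_8\phi_7$ we derive from $R_N$ is \emph{exactly} that series (with no stray $q$-power or sign), and so that the resulting ${}_4\phi_3$ admits the collapse to the target ${}_3\phi_2$. The potential appearance of $\sqrt{a}$ with $a=zq^{-1}$ is not a real issue, since the VWP pairs $(q^2\sqrt{a})_n(-q^2\sqrt{a})_n/((\sqrt{a})_n(-\sqrt{a})_n)$ telescope to the polynomial $(1-aq^{4n})/(1-a)$ before any square roots are actually used.

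If the direct Watson route proves too delicate, a purely elementary fallback is induction on $N$. The base case $N=1$ is immediate: both sides equal $(1-q^2)zq/\bigl((1-zq)(1-zq^2)\bigr)$. For the inductive step, one would expand $\left[\begin{smallmatrix} N\\ n\end{smallmatrix}\right]_{q^2}=\left[\begin{smallmatrix} N-1\\ n\end{smallmatrix}\right]_{q^2}+q^{2(N-n)}\left[\begin{smallmatrix} N-1\\ n-1\end{smallmatrix}\right]_{q^2}$ inside $R_N$, re-index, and use the telescoping induced by the factor $(zq^{2N+1};q^2)_n$ to match the corresponding recurrence for $S_1(z,q,N)$ obtained directly from its definition \eqref{s1zqn}. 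Either route yields the stated identity.
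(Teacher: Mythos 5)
Your proposal is correct and is essentially the paper's own argument read in reverse: your combination of the two summands into a single term carrying the factor $(1-zq^{4n-1})$ is exactly the inverse of the splitting step the paper performs, and the Watson application you outline is precisely the paper's Lemma \ref{frfi} (the finite Rogers--Fine identity, i.e.\ \eqref{watson87} with $c\to\infty$) specialized, in base $q^2$, to $\a=1/q$, $\b=zq$, $\tau=zq^2$ (equivalently $\a=-a/(bq)$, $\b=-b$, $\tau=-bq$ with $a=zq=-b$). The parameter bookkeeping you flag as the main obstacle does work out with these choices and lands directly on the right-hand side of Lemma \ref{interlemma} after the reindexings $n\to n-1$, $N\to N-1$, so no separate finite-Heine step is needed beyond the one already inside the proof of Lemma \ref{interlemma}.
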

\begin{proof}
%Replace $q$ by $q^2$, and then let $\alpha=\beta=q^2$, $\tau=zq$, and $\gamma=zq^4$ in \eqref{corfht} to obtain
%\end{equation}
%which will be a crucial intermediate step towards proving the lemma.
%Finally replace $n$ by $n-1$ to see that the last expression in the above equation equals the right-hand side of \eqref{inter}.
Replace $q$ by $q^2$ in Lemma \ref{frfi}, let $\a=-\frac{a}{bq}, \b=-b$ and $\tau=-bq$, and then multiply both sides of the resulting identity by $\frac{(1-aq^{2N+2})}{(1+b)(1+bq^{2N+1})}$ to deduce that
\begin{align*}
&\left(1-aq^{2N+2}\right)\sum_{n=0}^{N}\frac{(-1)^n\left(-\frac{aq}{b};q^2\right)_n(-bq;q^2)_{N-n}(q^2;q^2)_{N}b^nq^n}{(q^2;q^2)_{N-n}(-b;q^2)_{n+1}(-bq;q^2)_{N+1}}\nonumber\\
&=\sum_{n=0}^{N}\frac{(q^2;q^2)_N(\frac{-aq}{b};q)_{2n}(aq^4;q^2)_Nb^{2n}q^{2n^2+n}(1-aq^{4n+2})}{(q^2;q^2)_{N-n}(-b;q)_{2n+1}(aq^4;q^2)_{N+n}(1+bq^{2n+1})}\nonumber\\
&=\sum_{n=0}^{N}\frac{(q^2;q^2)_N(\frac{-aq}{b};q)_{2n}(aq^4;q^2)_Nb^{2n}q^{2n^2+n}}{(q^2;q^2)_{N-n}(-b;q)_{2n+1}(aq^4;q^2)_{N+n}}\left(1-\frac{bq^{2n+1}+aq^{4n+2}}{1+bq^{2n+1}}\right)\nonumber\\
&=\sum_{n=0}^{N}\left(\frac{(\frac{-aq}{b};q)_{2n}b^{2n}q^{n(2n+1)}}{(-b;q)_{2n+1}}-\frac{(\frac{-aq}{b};q)_{2n+1}b^{2n+1}q^{(2n+1)(n+1)}}{(-b;q)_{2n+2}}\right)\frac{(q^2;q^2)_N(aq^4;q^2)_N}{(q^2;q^2)_{N-n}(aq^4;q^2)_{N+n}}.
\end{align*}
Now let $a=zq=-b$ to get
\begin{align*}
&(1-zq^{2N+3})\sum_{n=0}^{N} \frac{(q;q^2)_{n}(q^2;q^2)_N(zq^2;q^2)_{N-n}z^n q^{2n}} {(zq;q^2)_{n+1}(q^2;q^2)_{N-n}(zq^2;q^2)_{N+1}}\nonumber\\
&=\sum_{n=0}^{N}\left(\frac{(q;q)_{2n}z^{2n}q^{2n^2+3n}}{(zq;q)_{2n+1}}+\frac{(q;q)_{2n+1}z^{2n+1}q^{(2n+1)(n+2)}}{(zq;q)_{2n+2}}\right)\frac{(q^2;q^2)_N(zq^5;q^2)_N}{(q^2;q^2)_{N-n}(zq^5;q^2)_{N+n}}.
\end{align*}
Now replace $n$ by $n-1$ on both sides, then multiply both sides of the resulting equation by $\frac{zq(1-q^{2N+2})}{(1-zq^{2N+3})}$, and then replace $N$ by $N-1$ in the resulting identity to obtain after simplification
%, combine with \eqref{inter}, multiply both sides by $zq$ and simplify to derive
 \begin{align}\label{j2}
&\sum_{n=1}^{N}\left[\begin{matrix} N\\n\end{matrix}\right]_{q^2} \frac{(q;q^2)_{n-1}(q^2;q^2)_{n}(zq^2;q^2)_{N-n} z^nq^{2n-1}}{(zq;q^2)_{n}(zq^2;q^2)_{N}}\nonumber\\
&=\sum_{n=1}^{N}\left[\begin{matrix} N\\n\end{matrix}\right]_{q^2}\left(\frac{(q;q)_{2n-2}z^{2n-1}q^{n(2n-1)}}{(zq;q)_{2n-1}}+\frac{(q;q)_{2n-1}z^{2n}q^{n(2n+1)}}{(zq;q)_{2n}}\right)\frac{(q^{2};q^2)_n}{(zq^{2N+1};q^2)_{n}}.
\end{align}
Now \eqref{1.7.2fin} follows from \eqref{inter} and \eqref{j2}.
\end{proof}
We have now collected all tools necessary to prove Theorem \ref{fingithm}.
\begin{proof}[Theorem \textup{\ref{fingithm}}][]
Replace $q$ by $q^2$, $z$ by  $z/q$, then $c$ by $z$ in Theorem \ref{fingGaravan} to get
\begin{align}\label{ee-3}
 \sum_{n=1}^{N}\left[\begin{matrix} N\\n\end{matrix}\right]_{q^2}\frac{(-1)^{n-1}z^nq^{n^2}(q^2;q^2)_n}{(zq;q^2)_n(1-zq^{2n})}=\sum_{n=1}^{N}\left[\begin{matrix} N\\n\end{matrix}\right]_{q^2}\frac{(q;q^2)_{n-1}(q^2;q^2)_{n} (zq^2;q^2)_{N-n} z^{n}q^{2n-1}}{(zq;q^2)_{n}(zq^2;q^2)_N}.
\end{align}
Now Theorem \ref{fingithm} follows from \eqref{ee-3}, \eqref{s1zqn} and Lemmas \ref{interlemma} and \ref{1.7.2finthm}.
\begin{remark}
From \eqref{sf}, and Lemma \textup{\ref{pfdfin}} with $q$ and $N$ replaced by $q^2$ and $N-1$ respectively and then with $a=1, b=zq^2$ and $t=zq$, we see that
\begin{align*}
S_1(z, q, N)=\frac{zq(q^2;q^2)_N}{(zq^2;q^2)_N}\sum_{n=0}^{N-1}\frac{(zq^2;q^2)_nq^{2n}}{(q^2;q^2)_n(1-zq^{2n+1})},
%=\frac{(zq^4;q^2)_{N-1}}{(1-zq^{2N-1})(q^2;q^2)_{N-1}}\sum_{n=0}^{N-1}\frac{(q^2;q^2)_{N-1}(zq;q^2)_{N-n-1}(q^2;q^2)_n(zq)^n}{(zq^4;q^2)_{n}(zq;q^2)_{N-1}(q^2;q^2)_{N-n-1}}.
\end{align*}
which is a simple representation for $S_1(z, q, N)$.
\end{remark}

%Substituting this in \eqref{ee2}, combining the resultant with \eqref{ee-2} and simplifying results in \eqref{inter}.
%\begin{align}\label{ee3}
%&\sum_{n=1}^{N}\left[\begin{matrix} N\\n\end{matrix}\right]_{q^2}\frac{(q;q^2)_{n-1}(q^2;q^2)_{n} (zq^2;q^2)_{N-n} z^{n}q^{2n-1}}{(zq;q^2)_{n}(zq^2;q^2)_N}\nonumber\\
%&=(1-q^{2N})\sum_{n=1}^{N}\frac{(q^2;q^2)_{N-1}(zq;q^2)_{N-n}(q^2;q^2)_{n-1}(zq)^n}{(zq^2;q^2)_n(zq;q^2)_N(q^2;q^2)_{N-n}}\nonumber\\
%&=\sum_{n=1}^{N} \left[\begin{matrix} N\\n\end{matrix}\right]_{q^2}\frac{(q^2;q^2)_{n-1}(zq;q^2)_{N-n}(q^2;q^2)_{n} z^nq^{n}}{(zq^2;q^2)_{n}(zq;q^2)_{N} }.
%\end{align}
%Thus from \eqref{inter}, \eqref{ee-3} and Lemma \ref{1.7.2finthm}, we complete the proof of Theorem \ref{fingithm}.
\end{proof}
Letting $z=1$ in Theorem \ref{fingithm}, Lemmas \ref{interlemma} and \ref{1.7.2finthm}, simplifying, and then combining the resulting identities together, we get
\begin{corollary}Let $N\in\mathbb{N}$. We have
\begin{align}
 &\sum_{n=1}^{N}\left[\begin{matrix} N\\n\end{matrix}\right]_{q^2}\frac{(-1)^{n-1}q^{n^2}(q^2;q^2)_{n-1}}{(q;q^2)_n}=\sum_{n=1}^{N}\frac{q^{2n-1}}{1-q^{2n-1}}
=\sum_{n=1}^{N} \left[\begin{matrix} N\\n\end{matrix}\right]_{q^2}\frac{(q^2;q^2)_{n-1}(q;q^2)_{N-n}q^{n}}{(q;q^2)_{N} }\nonumber\\
&=\sum_{n=1}^{N}\left[\begin{matrix} N\\n\end{matrix}\right]_{q^2}\frac{(q^2;q^2)_{n-1}(1-q^{4n-1})q^{n(2n-1)}}{(q^{2N+1};q^2)_{n}(1-q^{2n-1})},\label{ga3sz1}
\end{align}
\end{corollary}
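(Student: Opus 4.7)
The plan is to set $z=1$ in three separate expressions for $S_1(z,q,N)$ --- namely its definition \eqref{s1zqn}, Lemma \ref{interlemma}, and Lemma \ref{1.7.2finthm} --- and then to combine these with the $z=1$ case of Theorem \ref{fingithm}. For readability, denote the four sums appearing in the corollary by $(\textup{A})$, $(\textup{B})$, $(\textup{C})$, $(\textup{D})$, in the order they appear.

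First, setting $z=1$ directly in the definition \eqref{s1zqn} makes the factor $(zq^2;q^2)_n$ in the denominator equal to $(q^2;q^2)_n$, which cancels the identical factor in the numerator, giving $S_1(1,q,N) = (\textup{C})$. Next, setting $z=1$ in Lemma \ref{interlemma} produces the ratio $(q;q^2)_{n-1}/(q;q^2)_n = 1/(1-q^{2n-1})$; expanding the $q^2$-binomial coefficient via $\left[\begin{matrix} N\\n\end{matrix}\right]_{q^2} = (q^2;q^2)_N/\bigl((q^2;q^2)_n(q^2;q^2)_{N-n}\bigr)$ then cancels every remaining $q$-shifted factorial, and one is left with
\begin{equation*}
S_1(1,q,N) = \sum_{n=1}^{N} \frac{q^{2n-1}}{1-q^{2n-1}} = (\textup{B}).
\end{equation*}
This already establishes $(\textup{B}) = (\textup{C})$.

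For the remaining two sums, the single non-routine manipulation is the simplification of the bracket in Lemma \ref{1.7.2finthm} at $z=1$, namely $q^{n(2n-1)}/(1-q^{2n-1}) + q^{n(2n+1)}/(1-q^{2n})$: placing both terms over the common denominator $(1-q^{2n-1})(1-q^{2n})$, the resulting numerator telescopes via
\begin{equation*}
q^{n(2n-1)}(1-q^{2n}) + q^{n(2n+1)}(1-q^{2n-1}) \;=\; q^{n(2n-1)} - q^{n(2n-1)+4n-1} \;=\; q^{n(2n-1)}(1-q^{4n-1}),
\end{equation*}
so the bracket equals $q^{n(2n-1)}(1-q^{4n-1})/\bigl((1-q^{2n-1})(1-q^{2n})\bigr)$. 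Using $(q^2;q^2)_n = (1-q^{2n})(q^2;q^2)_{n-1}$ to absorb the spare $(1-q^{2n})$ into the factor $(q^2;q^2)_n$ of the prefactor, Lemma \ref{1.7.2finthm} at $z=1$ becomes $S_1(1,q,N) = (\textup{D})$.

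Finally, Theorem \ref{fingithm} at $z=1$ relates $(\textup{A})$ and $(\textup{D})$ directly: on the LHS, the identity $(q^2;q^2)_n/(1-q^{2n}) = (q^2;q^2)_{n-1}$ turns the summand into that of $(\textup{A})$, while the RHS simplifies to the summand of $(\textup{D})$ by the same telescoping just performed. Concatenating all the equalities yields $(\textup{A}) = (\textup{D}) = S_1(1,q,N) = (\textup{B}) = (\textup{C})$, which is exactly the statement of the corollary. The only real algebraic content is the short telescoping identity above; everything else is routine cancellation of $q$-shifted factorials and $q^2$-binomial coefficients, so no significant obstacle is anticipated.
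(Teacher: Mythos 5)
Your proposal is correct and follows the paper's own route exactly: the paper likewise obtains the corollary by setting $z=1$ in Theorem \ref{fingithm}, Lemmas \ref{interlemma} and \ref{1.7.2finthm} (together with the definition \eqref{s1zqn} of $S_1$), simplifying, and chaining the resulting identities. Your telescoping computation $q^{n(2n-1)}(1-q^{2n})+q^{n(2n+1)}(1-q^{2n-1})=q^{n(2n-1)}(1-q^{4n-1})$ is the correct simplification underlying the fourth expression, so nothing is missing.
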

The limiting case $N\to\infty$ of the above identity is well-known (see \cite[Remark 3]{dixitmaji18}). 

We now give partition-theoretic interpretation of the first equality in the above corollary. This generalizes Corollary 1.3 (i) of Garvan \cite{garvan1}.

Consider $S_3=\left\{\vec{\pi} =(\vec{\pi_1},\vec{\pi_2})\in V_3: |\vec{\pi}|=|\vec{\pi_1}|+|\vec{\pi_2}|\right\}$ and $\vec{V_3}=D_{E,n,N}\times P^*_{O,N}$ where $D_{E,n,N}$ denotes the set of partitions of a number into distinct even parts lying between $[2N-2n+2,2N]$ and $P^*_{O,N}$ is the set of partitions of a number in which all parts except possibly the largest part are odd and all odd positive integers less than or equal to the largest part occur as parts.
\begin{corollary}
Let $\nu_d(\l)$ be as defined in the introduction, and let $S_3$ and $V_3$ be defined as above. Let $\ell_{O}(\l)$ denote the largest odd part of the partition $\l$. Define $w(\vec{\pi}):=(-1)^{\nu_{d}(\pi_1)+\frac{1}{2}(\ell_{O}(\pi_2)-1)}$. Let $d_1(m, N)$ denote the number of odd divisors of $m$ which are less than or equal to $2N-1$. Then
\begin{equation*}
\sum_{n=1}^{N}\sum_{\substack{\vec{\pi}\in S_3\\|\vec{\pi}|=m}}w(\vec{\pi})=d_1(m, N).
\end{equation*}
\end{corollary}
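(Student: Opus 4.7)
The plan is to derive this corollary as the combinatorial interpretation of the first equality in the preceding corollary, namely
\begin{equation*}
\sum_{n=1}^{N}\left[\begin{matrix} N\\n\end{matrix}\right]_{q^2}\frac{(-1)^{n-1}q^{n^2}(q^2;q^2)_{n-1}}{(q;q^2)_n}=\sum_{n=1}^{N}\frac{q^{2n-1}}{1-q^{2n-1}}.
\end{equation*}
The right-hand side is handled first: expanding each summand as $\frac{q^{2n-1}}{1-q^{2n-1}}=\sum_{k\geq 1}q^{(2n-1)k}$, the coefficient of $q^m$ counts the pairs $(n,k)$ with $1\le n\le N$ and $(2n-1)k=m$, which is exactly the number of odd divisors of $m$ not exceeding $2N-1$, namely $d_1(m,N)$. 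Hence it suffices to show that the double sum in the statement has the left-hand side as its generating function.

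Next I would fix $n$ and analyze the $n$-th summand of the left-hand side. Using the elementary identity
\begin{equation*}
\left[\begin{matrix} N\\n\end{matrix}\right]_{q^2}(q^2;q^2)_{n-1}=\frac{(q^{2(N-n+1)};q^2)_n}{1-q^{2n}},
\end{equation*}
I would rewrite the $n$-th term as
\begin{equation*}
(-1)^{n-1}(q^{2(N-n+1)};q^2)_n\cdot\frac{q^{n^2}}{(q;q^2)_n(1-q^{2n})}.
\end{equation*}
The factor $(q^{2(N-n+1)};q^2)_n=\prod_{j=N-n+1}^{N}(1-q^{2j})$ is the generating function for partitions $\pi_1$ into distinct even parts drawn from $\{2(N-n+1),2(N-n+2),\dots,2N\}$ weighted by $(-1)^{\#(\pi_1)}=(-1)^{\nu_{d}(\pi_1)}$; these are precisely the elements of $D_{E,n,N}$. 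For the remaining factor, I would write
\begin{equation*}
\frac{q^{n^2}}{(q;q^2)_n(1-q^{2n})}=\prod_{k=1}^{n}\frac{q^{2k-1}}{1-q^{2k-1}}\cdot\frac{1}{1-q^{2n}},
\end{equation*}
and identify it as the generating function for partitions $\pi_2$ in which each of the odd integers $1,3,\dots,2n-1$ appears at least once, no larger odd part appears, and the even integer $2n$ appears any non-negative number of times. Such a $\pi_2$ has $\ell_{O}(\pi_2)=2n-1$ and largest part either $2n-1$ or $2n$, with all parts below the largest being odd and all odd positive integers up to the largest part present; these are exactly the $\pi_2\in P^{*}_{O,N}$ with $\ell_{O}(\pi_2)=2n-1$.

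Combining these interpretations and observing that the prefactor $(-1)^{n-1}$ matches $(-1)^{\frac{1}{2}(\ell_{O}(\pi_2)-1)}$ whenever $\ell_{O}(\pi_2)=2n-1$, the $n$-th summand equals
\begin{equation*}
\sum_{\substack{\vec{\pi}\in S_3\\ \ell_{O}(\pi_2)=2n-1}}w(\vec{\pi})\,q^{|\vec{\pi}|}.
\end{equation*}
Summing over $n=1,\dots,N$ then partitions the collection of all pairs $\vec{\pi}\in S_3$ according to the value $\ell_{O}(\pi_2)\in\{1,3,\dots,2N-1\}$, with no overlap, so equating coefficients of $q^m$ on both sides of the preceding corollary yields the claim. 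The main point requiring attention is the bijective identification of $\{\pi_2\in P^{*}_{O,N}:\ell_{O}(\pi_2)=2n-1\}$ with the partitions generated by $\prod_{k=1}^{n}\frac{q^{2k-1}}{1-q^{2k-1}}\cdot\frac{1}{1-q^{2n}}$: fixing the largest odd part at $2n-1$ forces the only possible non-odd part to equal $2n$ (with arbitrary multiplicity), since any even part strictly larger than $2n$ would create a forbidden odd gap, and any odd part strictly larger than $2n-1$ would contradict the assumption on $\ell_{O}(\pi_2)$.
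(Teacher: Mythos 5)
Your proposal is correct and follows essentially the same route as the paper: both interpret the first equality of the preceding corollary term by term, reading $\left[\begin{smallmatrix} N\\n\end{smallmatrix}\right]_{q^2}(q^2;q^2)_{n-1}\,\frac{(-1)^{n-1}q^{n^2}}{(q;q^2)_n}$ as $(q^{2(N-n+1)};q^2)_n$ times $\prod_{k=1}^{n}\frac{q^{2k-1}}{1-q^{2k-1}}\cdot\frac{1}{1-q^{2n}}$, which generate the weighted pairs $(\pi_1,\pi_2)\in D_{E,n,N}\times P^{*}_{O,N}$ with $\ell_{O}(\pi_2)=2n-1$, while the right-hand side generates $d_1(m,N)$. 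Your added justification that fixing $\ell_{O}(\pi_2)=2n-1$ forces the only admissible non-odd part to be $2n$ is a welcome elaboration of a point the paper leaves implicit.
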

%\begin{equation*}
\begin{proof}
We first show
\begin{equation}\label{lh1}
\sum_{n=1}^{N}\frac{(q^2;q^2)_N}{(q^2;q^2)_{N-n}}\frac{(-1)^{n-1}q^{n^2}}{(1-q^{2n})(q;q^2)_n}=\sum_{m=1}^{\infty}\left( \sum_{n=1}^{N}\sum_{\substack{\vec{\pi}\in S_3\\|\vec{\pi}|=m}}w(\vec{\pi})\right)q^m.
\end{equation}
To that end, note that $\frac{(q^2;q^2)_N}{(q^2;q^2)_{N-n}}=(q^{2N-2n+2};q^2)_n$ implies that it generates partitions $\pi_1$ coming from the set $D_{E,n,N}$ with weight $(-1)^{\nu_d(\pi_1)}$. Also,
\begin{equation*}
\frac{(-1)^{n-1}q^{n^2}}{(1-q^{2n})(q;q^2)_n}=(-1)^{n-1}\frac{q}{1-q}\cdot\frac{q^3}{1-q^3}\cdots\frac{q^{2n-1}}{1-q^{2n-1}}\cdot\frac{1}{1-q^{2n}}
\end{equation*}
implies that it generates the partitions $\pi_2$ coming from $P^*_{O,N}$ with weight equal to $(-1)^{\frac{1}{2}(\ell_{O}(\pi_2)-1)}$. Combining the two establishes \eqref{lh1}. Finally,
\begin{align*}
\sum_{n=1}^{N}\frac{q^{2n-1}}{1-q^{2n-1}}=\sum_{n=1}^{N}\sum_{m=1}^{\infty}q^{m(2n-1)}= \sum_{k=1}^{\infty}\left(\sum_{\substack{d|k, d\hspace{1mm}\textup{odd}\\d\leq 2N-1}}1\right)q^k
\end{align*}
implies that it generates $d_1(m, N)$. Thus, the above interpretations of the expressions in the first equality of \eqref{ga3sz1} establish the corollary.
\end{proof}
Another corollary of Theorem \ref{fingithm} is now presented.
\begin{corollary}
Let $N\in\mathbb{N}$. Then
\begin{align}\label{corfingiagl}
&\frac{1}{(1-q)}\left(1-\frac{(q^2;q^2)_{N}}{(q^3;q^2)_{N}}\right)=\sum_{n=1}^{N}\left[\begin{matrix} N\\n\end{matrix}\right]_{q^2}\frac{(-1)^{n-1}q^{n(n+1)}}{1-q^{2n+1}}=\sum_{n=1}^{N}\left[\begin{matrix} N\\n\end{matrix}\right]_{q^2}\frac{(q;q^2)_{n-1}(q^3;q^2)_{N-n}q^{3n-1}}{(q^3;q^2)_N}\nonumber\\
&=\sum_{n=1}^{N}\frac{(q^2;q^2)_{n-1}}{(q^3;q^2)_n}q^{2n}=(1-q)\sum_{n=1}^{N}\left[\begin{matrix} N\\n\end{matrix}\right]_{q^2}\frac{(q^2;q^2)_{n-1}(1-q^{4n})q^{2n^2+n-1}}{(q^{2N+2};q^2)_{n}(1-q^{2n-1})(1-q^{2n+1})}.
\end{align}
\end{corollary}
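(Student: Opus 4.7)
The plan is to derive each of the five equalities by specializing earlier results at $z=q$ (for the lemmas in Section \ref{fingi}) or via a direct substitution in Corollary \ref{fin_AGL}.

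\textbf{First equality.} In Corollary \ref{fin_AGL}, replace $q$ by $q^2$ and then set $c=q$. Since $q^{n(n+1)/2}|_{q\mapsto q^2}=q^{n(n+1)}$ and $1-cq^n|_{q\mapsto q^2,c=q}=1-q^{2n+1}$, this immediately yields
\[
\sum_{n=1}^{N}\left[\begin{matrix} N\\n\end{matrix}\right]_{q^2}\frac{(-1)^{n-1}q^{n(n+1)}}{1-q^{2n+1}}=\frac{1}{1-q}\left(1-\frac{(q^2;q^2)_N}{(q^3;q^2)_N}\right).
\]

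\textbf{Second equality (2nd $=$ 3rd).} This is the special case $z=q$ of the identity \eqref{ee-3} obtained in the proof of Theorem \ref{fingithm}. Setting $z=q$ in the LHS of \eqref{ee-3} reproduces the second expression, and on the RHS the factor $(q^2;q^2)_n$ in the numerator cancels against $(zq;q^2)_n|_{z=q}=(q^2;q^2)_n$ in the denominator, yielding the third expression.

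\textbf{Third equality (3rd $=$ 4th).} Specialize $z=q$ in the defining formula \eqref{s1zqn} of $S_1(z,q,N)$. Using $\left[\begin{matrix} N\\n\end{matrix}\right]_{q^2}=(q^2;q^2)_N/((q^2;q^2)_n(q^2;q^2)_{N-n})$, the factors $(q^2;q^2)_n(q^2;q^2)_{N-n}/(q^2;q^2)_N$ cancel, leaving
\[
S_1(q,q,N)=\sum_{n=1}^{N}\frac{(q^2;q^2)_{n-1}\,q^{2n}}{(q^3;q^2)_n}.
\]
On the other hand, Lemma \ref{interlemma} at $z=q$ gives
\[
S_1(q,q,N)=\sum_{n=1}^{N}\left[\begin{matrix} N\\n\end{matrix}\right]_{q^2}\frac{(q;q^2)_{n-1}(q^3;q^2)_{N-n}\,q^{3n-1}}{(q^3;q^2)_N},
\]
which is the third expression. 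Equating these proves 3rd $=$ 4th.

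\textbf{Fourth equality (4th $=$ 5th).} Set $z=q$ in Lemma \ref{1.7.2finthm}. Using $(q^2;q)_{m}=(q;q)_{m+1}/(1-q)$, one computes
\[
\frac{(q;q)_{2n-2}}{(q^2;q)_{2n-1}}=\frac{1-q}{(1-q^{2n-1})(1-q^{2n})},\qquad \frac{(q;q)_{2n-1}}{(q^2;q)_{2n}}=\frac{1-q}{(1-q^{2n})(1-q^{2n+1})},
\]
so the bracketed factor in Lemma \ref{1.7.2finthm} simplifies (after putting the two fractions over the common denominator $(1-q^{2n-1})(1-q^{2n})(1-q^{2n+1})$, where the numerator collapses to $q^{2n^2+n-1}(1-q^{4n})$) to
\[
\frac{(1-q)\,q^{2n^2+n-1}(1-q^{4n})}{(1-q^{2n-1})(1-q^{2n})(1-q^{2n+1})}.
\]
Combining with $(q^2;q^2)_n/(1-q^{2n})=(q^2;q^2)_{n-1}$ yields the fifth expression, completing the chain.

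The only nontrivial step is the algebraic collapse in the fourth equality, where the two terms arising from Lemma \ref{1.7.2finthm} must combine into a single clean summand; this is a routine but slightly delicate manipulation that I have checked above.
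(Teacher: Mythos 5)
Your proposal is correct and follows essentially the same route as the paper: the authors likewise obtain the first equality from Corollary \ref{fin_AGL} with $q\mapsto q^2$, $c=q$, and the remaining equalities by setting $z=q$ in Theorem \ref{fingithm} together with the chain of intermediate identities (\eqref{ee-3}, \eqref{s1zqn}, Lemmas \ref{interlemma} and \ref{1.7.2finthm}) established in its proof. Your explicit verification of the algebraic collapse in the last equality, including the simplification of $(q;q)_{2n-2}/(q^2;q)_{2n-1}$ and $(q;q)_{2n-1}/(q^2;q)_{2n}$ and the cancellation $(q^2;q^2)_n/(1-q^{2n})=(q^2;q^2)_{n-1}$, is accurate and merely spells out what the paper leaves as "simplifying."
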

\begin{proof}
Let $z=q$ in Theorem \ref{fingithm}. Also, replace $q$ by $q^2$ in Corollary \ref{fin_AGL} and then let $c=q$. Combining the two resulting identities lead to \eqref{corfingiagl}.
\end{proof}
\section{Concluding Remarks}\label{cr}
In this paper, we have obtained, among other things, the finite analogues of rank and crank for vector partitions. It would be worthwhile seeing if rank and crank exist for ordinary partitions enumerated by $p(n, N)$. 

We give below another representation for the finite analogue of the crank generating function $\frac{(q)_N}{(zq)_{N}(z^{-1}q)_{N}}$ which portrays the possibility of mimicking the approach that Andrews and Garvan \cite[Equation (2.3)]{andrewsgarvan88} used to obtain crank of an ordinary partition enumerated by $p(n)$. Note that letting $\a=-zq$ and $\tau=z^{-1}$ in \eqref{rowyee} gives
\begin{equation*}
\frac{(q^2)_{N-1}}{(z^{-1}q)_{N}}=\frac{1}{(1-q^{N+1})}\sum_{k=0}^{N}\left[\begin{matrix} N \\ k \end{matrix} \right]\frac{(zq)_k(z^{-1}q)^k}{(z^{-1}q^{N-k+1})_{k}}.
\end{equation*}
Hence,
\begin{align}
&\frac{(q)_N}{(zq)_{N}(z^{-1}q)_{N}}\nonumber\\
%&=\frac{(1-q)}{(zq)_{N}}\frac{(q^2)_{N-1}}{(z^{-1}q)_{N}}\nonumber\\
&=\frac{1-q}{(1-q^{N+1})(zq)_{N}}+\frac{(1-q)}{(1-q^{N+1})}\sum_{k=1}^{N}\left[\begin{matrix} N \\ k \end{matrix} \right]\frac{q^kz^{-k}}{(zq^{k+1})_{N-k}(z^{-1}q^{N-k+1})_k}\label{cnzq1}\\
&=\frac{1-q}{(zq)_{N}}+(1-q)\sum_{k=1}^{N}\left[\begin{matrix} N \\ k \end{matrix} \right]\frac{q^kz^{-k}}{(zq^{k+1})_{N-k}(z^{-1}q^{N-k+1})_k}\nonumber\\
&\quad+\frac{q^{N+1}}{(1-q^{N+1})}\left(\frac{1-q}{(zq)_{N}}+(1-q)\sum_{k=1}^{N}\left[\begin{matrix} N \\ k \end{matrix} \right]\frac{q^kz^{-k}}{(zq^{k+1})_{N-k}(z^{-1}q^{N-k+1})_k}\right).\label{cnzq2}
\end{align}
Note that letting $N\to\infty$ in either \eqref{cnzq1} and \eqref{cnzq2} gives
\begin{equation*}
\frac{1-q}{(zq)_{\infty}}+\sum_{k=1}^{\infty}\frac{q^kz^{-k}}{(q^2)_{k-1}(zq^{k+1})_{\infty}},
\end{equation*}
which was combinatorially interpreted by Andrews and Garvan \cite[Equation (2.3)]{andrewsgarvan88} thereby obtaining the crank for an ordinary partition. Unfortunately, we are unable to proceed beyond \eqref{cnzq1} or \eqref{cnzq2}.

Do there exist congruences for $p(n, N)$ which could be combinatorially explained by our finite analogues of rank and crank for vector partitions? It would also be worthwhile to see if there exists a refinement for $\textup{spt}(n, N)$ of Andrews' famous congruences for $\textup{spt}(n)$, namely \cite{andrews08},
\begin{align*}
\textup{spt}(5n+4)&\equiv0\pmod{5},\nonumber\\
\textup{spt}(7n+5)&\equiv0\pmod{7},\nonumber\\
\textup{spt}(13n+6)&\equiv0\pmod{13}.
\end{align*}
Moreover, it would be interesting to see if a congruence of the form \eqref{kronreh} holds for $\textup{spt}(n, N)$.

In \cite{garvan10}, Garvan conjectured that for any even $k \geq 2$, the inequality 
\begin{equation*}
M_{k}(n) > N_{k}(n)
\end{equation*}
must be true for all $n \geq 1$.  
For a sufficiently large $n$ and fixed $k$, this was proved by Bringmann and Mahlburg \cite{BM09}, and by Bringmann, Mahlburg and Rhoades \cite{BMR11} by analyzing the asymptotic behavior of the difference $M_{k}(n) - N_{k}(n)$.  Later Garvan \cite{garvan11} himself proved his conjecture for all $n$ and $k$ by finding a combinatorial interpretation for the difference between symmetrized crank and rank moments. 
In what follows, we give a similar conjecture for the difference of finite crank and rank moments.  
\begin{conjecture}
For any fixed natural number $N$ and even $k>2$, 
\begin{equation*}
M_{k,N}(n) > N_{k,N}(n) \quad \mathrm{for \,\, all} \,\, n\geq 1. 
\end{equation*}
\end{conjecture}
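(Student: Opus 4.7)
The plan is to import the framework Garvan used in \cite{garvan11} to settle the corresponding unrestricted conjecture. The starting point is to introduce finite symmetrized rank and crank moments
$$\eta_{2k,N}(n) := \sum_{m=-\infty}^{\infty} \binom{m+k-1}{2k}\, N_{S_1}(m,n), \qquad \mu_{2k,N}(n) := \sum_{m=-\infty}^{\infty} \binom{m+k-1}{2k}\, M_{S_2}(m,n),$$
with $N_{S_1}$ and $M_{S_2}$ as in Section \ref{new}. The polynomial $\binom{m+k-1}{2k}$ is an even function of $m$ (after a shift) of degree $2k$, and since $N_{S_1}(\pm m,n)$ and $M_{S_2}(\pm m,n)$ coincide, one can invert the triangular base change from $\{m^{2j}\}_{j\leq k}$ to $\{\binom{m+j-1}{2j}\}_{j\leq k}$ to obtain
$$M_{2k,N}(n)-N_{2k,N}(n) \; = \; \sum_{j=1}^{k} c_{k,j}\bigl(\mu_{2j,N}(n)-\eta_{2j,N}(n)\bigr),$$
with explicit \emph{positive} rationals $c_{k,j}$ (the same constants arising in Garvan's unrestricted identity). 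Thus the conjecture reduces to showing
$$\textup{spt}_{j,N}(n) := \mu_{2j,N}(n)-\eta_{2j,N}(n) \;\geq\; 0$$
for every $j\geq 1$ and $n\geq 1$, with strict positivity for at least one $j$. The case $j=1$ is precisely Theorem \ref{fin_spt_identity}, which identifies $\textup{spt}_{1,N}(n)=2\,\textup{spt}(n,N)>0$ and supplies the base of the programme.

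Next, I would derive a $q$-series identity for $\sum_{n\geq 1}\textup{spt}_{j,N}(n)\,q^n$ by applying $\binom{m+j-1}{2j}$-style coefficient extraction to the generating functions of Theorems \ref{frgfthm} and \ref{fcgfthm}; analytically this amounts to a suitable composition of the operator $z\tfrac{d}{dz}$ evaluated at $z=1$, together with divided differences. The output should be a Bailey-type sum whose $q$-coefficients are visibly non-negative; for $j=1$ this should specialize to \eqref{gen_finite_spt}. To upgrade non-negativity of the coefficients to a conceptual proof, the objective is to interpret $\textup{spt}_{j,N}(n)$ combinatorially as counting $j$-marked vector partitions drawn from appropriate subsets of the families $V_1,V_2$ of Section \ref{new}, where the $\binom{m+j-1}{2j}$ weight is realized as a choice of $j$ \emph{markable} parts satisfying Garvan's nesting rules, subject to the restriction that the largest parts do not exceed $N$.

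The principal obstacle will be this combinatorial interpretation in the finite setting. In Garvan's infinite construction, the pairing that establishes $\mu_{2j}(n)\geq\eta_{2j}(n)$ relies on unconstrained manipulation of the Durfee-square and largest-part structure. In our setting the condition $l(\pi_i)\leq N$ on the $V_2$ side and the Durfee-square-controlled range $[N-j+1,N]$ for $\pi_1$ on the $V_1$ side (cf.~\eqref{S1}) interact non-trivially with Garvan's moves, so one has to verify that his markings survive truncation or else identify the correct boundary term. A fallback analytic route is available: extending the partial-fraction analysis in the proof of Theorem \ref{asyest}, one can show that $\textup{spt}_{j,N}(n)$ is dominated as $n\to\infty$ by a positive multiple of $n^{N+2j-1}$, which would settle the conjecture for all sufficiently large $n$ at each fixed pair $(j,N)$, the remaining small-$n$ cases then being a finite check. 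Combining the two approaches — combinatorial for exact non-negativity, asymptotic for uniform positivity beyond a computable threshold — appears to be the most promising path.
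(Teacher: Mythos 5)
The statement you are addressing is stated in the paper as a \emph{conjecture}: the authors prove only the case $k=2$ (Corollary \ref{inq_fin_crank_rank}, via Theorem \ref{fin_spt_identity} and the positivity of $\textup{spt}(n,N)$) and otherwise report numerical verification for $4\leq k\leq 12$ and $1\leq n\leq 20$. There is therefore no proof in the paper to compare against, and your proposal does not close the gap either: it is a programme whose decisive step is left open. The preliminary reduction is sound — the base change from $\{m^{2j}\}$ to $\{\binom{m+j-1}{2j}\}$ has positive coefficients, and the symmetries $N_{S_1}(m,n)=N_{S_1}(-m,n)$, $M_{S_2}(m,n)=M_{S_2}(-m,n)$ established in the paper make the symmetrized moments well-defined — so the conjecture is indeed equivalent to $\mu_{2j,N}(n)-\eta_{2j,N}(n)\geq 0$ for $j\geq 2$ (strictness coming from the $j=1$ term, which is $2\,\textup{spt}(n,N)>0$). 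But that nonnegativity \emph{is} the conjecture; everything before it is formal. You acknowledge the obstacle yourself: Garvan's $j$-marked partition interpretation of $\textup{spt}_j(n)$ in \cite{garvan11} depends on manipulations of Durfee squares and largest parts that are not obviously compatible with the constraint $l(\pi_i)\leq N$ in $S_2$ and the window $[N-j+1,N]$ in \eqref{S1}, and you do not exhibit the finite analogue of the marking or identify the boundary correction. Until that is done, no inequality has been proved for any $k>2$.

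The fallback analytic route also does not suffice as stated. An extension of the partial-fraction argument of Theorem \ref{asyest} could plausibly give $\mu_{2j,N}(n)-\eta_{2j,N}(n)\sim C_{j,N}\,n^{d}$ for some degree $d$ and constant $C_{j,N}$, but you would still need to (i) prove $C_{j,N}>0$, which is not automatic and is itself a nontrivial claim about the generating functions obtained by applying $\bigl(z\frac{d}{dz}\bigr)$-operators to Theorems \ref{frgfthm} and \ref{fcgfthm} at $z=1$, and (ii) make the threshold effective. Even then the result holds only for $n$ beyond a threshold depending on $(k,N)$, leaving an infinite family of finite verifications — one for each pair $(k,N)$ — which cannot be discharged by a single computation. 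So the asymptotic route at best yields a partial result of the Bringmann--Mahlburg type, not the conjecture. In short: the reduction to finite symmetrized moments is a reasonable and probably correct first step, but the proposal contains no proof of the key nonnegativity, which remains open in the paper as well.
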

For $k=2$, the result is already shown to be true in Corollary \ref{inq_fin_crank_rank}. We have numerically verified, with the help of \textit{Mathematica}, that the above conjecture holds at least for $4\leq k \leq 12$ and $1\leq n \leq 20$. 
%We have checked in \textit{Mathematica} that the identity
%, and whose combinatorial interpretation is nothing but \eqref{ffwidty}, as given in Patkowski \cite[p.~293]{patkowskicm}.

Finally, considering the enormous impact and applications of the theory of Fine's function $F(\a, \b; t)$ developed in \cite{fine}, it would be worthwhile to do the same for its finite analogue $F_{N}(\a, \b; t)$ defined in \eqref{finefunctionfin}. In this paper, we have only obtained two results for $F_{N}(\a, \b; t)$, namely, its partial fraction decomposition and the finite version of the Rogers-Fine identity, because obtaining those was essential to proving Theorem \ref{fingithm}. We also note that another finite analogue of Fine's function was studied in \cite{andrewsbell}.

\end{document}